\documentclass[a4paper,12pt]{article}
\usepackage[T2A]{fontenc}                      
\usepackage[cp1251]{inputenc}           
\usepackage[all]{xy}
\usepackage{amssymb}
\usepackage{cite}
\usepackage{cmap}
\usepackage{latexsym}
\usepackage{enumerate}
\usepackage{amsmath, amsthm, amscd, amsfonts, amssymb, graphicx, color}
\usepackage[left=2.5cm,right=2.5cm,top=2cm,bottom=2cm]{geometry}
\usepackage{indentfirst}
\usepackage{array}
\usepackage{bm}
\usepackage{float}

\usepackage{wrapfig}

\newtheorem{corollary}{Corollary}
\newtheorem{conjecture}{Conjecture}
\newtheorem{proposition}{Proposition}

\newtheorem{theorem}{Theorem}
\newtheorem{lemma}{Lemma}
\newtheorem{definition}{Definition}

\begin{document}	
\title{From roots to paths: graphs simultaneously irregular with respect to rooted and ordinary paths}
\date{}
\author
	{Tatiana Dovzhenok\thanks{Research Laboratory "Mathematics of Hybrid Intelligence Systems", 	Francisk Skorina Gomel State University, 246028,  Gomel,  Belarus. 		
	E-mail: \texttt{t.dovzhenok@mail.ru}}}
\maketitle

\begin{abstract}	
	Let $P_n$ denote a path on $n$ vertices. A simple finite graph $G$ is called $P_n$-irregular if any two distinct vertices of $G$ belong to a different number of subgraphs of $G$ isomorphic to $P_n$. Alternatively, for a fixed vertex $r$ of $P_n$ (the root), $G$ is called $(P_n)_r$-irregular if any two distinct vertices of $G$ act as the root $r$ in a different number of subgraphs of $G$ isomorphic to $P_n$. 
	This paper proves that for each integer $k \geq 4$, there exists an infinite family of graphs that are simultaneously $P_n$-irregular and $(P_n)_r$-irregular for every integer $n$ satisfying $4 \leq n \leq k$ and every root $r$ of $P_n$. For the path $P_3$, we observe that no nontrivial $(P_3)_r$-irregular graphs exist if $r$ is the central vertex. In contrast, if $r$ is an end-vertex of $P_3$, an infinite collection of graphs is constructed that are both $P_3$-irregular and $(P_3)_r$-irregular. In particular, these results confirm the Strong Conjecture about $F$-irregular graphs for the case where $F$ is a path $P_n$.
	
	\textbf{Keywords}: path $P_n$, root $r$, $P_n$-degree, $(P_n)_r$-degree, $P_n$-irregular graph, $(P_n)_r$-irregular graph, Strong Conjecture about $F$-irregular graphs, Strong Conjecture about $(F)_r$-irregular graphs.
\end{abstract}

\begin{flushright} 
	\itshape Dedicated with deep gratitude \\ 
	to Maestro Yakov Samuilovich Konstantinovsky. \hspace{-2ex}
	\vspace{0.4cm} 
	
	We both cultivate trees: I in the field of graph theory, \\ 
	and he in the garden of life. 
\end{flushright}
	
\section{Introduction}
Throughout this paper, we consider only finite, simple, and undirected graphs. 
For a graph $G$, let $V(G)$ and $E(G)$ be its vertex and edge sets, respectively. 
The order of $G$, denoted by $|G|$, is the number of its vertices; 
we say that $G$ is nontrivial if $|G| \geq 2$. 
The neighborhood of a vertex $v$ in $G$, denoted by $N_G(v)$, is the set of vertices adjacent to $v$, 
and its closed neighborhood is defined as $N_G[v] = N_G(v) \cup \{v\}$. 
The degree of $v$ is $\deg_G(v) = |N_G(v)|$, while $\delta(G)$ stands for the minimum degree of $G$.

We use standard notation for common graph families of order $n$: $P_n$ for the path, $C_n$ for the cycle, $K_n$ for the complete graph, and $K_{1,n-1}$ for the star. Unless otherwise stated, all parameters in this work are assumed to be non-negative integers. 

To ensure precision, particularly when vertices are labeled numerically, 
we adopt a specific convention for representing paths. 

Formally, for $n \geq 1$, a path $P_n$ is a graph with vertex set $\{v_1, v_2, \dots, v_n\}$ such that two vertices $v_i$ and $v_j$ are adjacent if and only if $|i - j| = 1$. We denote the path by 
listing its vertices in the order of their occurrence:
\[
P_n = (v_1, v_2, \dots, v_n).
\]
This notation is naturally invariant with respect to the direction of 
traversal, meaning that $(v_1, v_2, \dots, v_n)$ and 
$(v_n, v_{n-1}, \dots, v_1)$ represent the same object. By convention, 
we extend the definition to $n=0$ to account for the empty graph. 
Under this unified approach, an edge with end-vertices~$u$ and~$v$ is viewed as a path of order~$2$ and written as $(u, v)$. To prevent any ambiguity, ordered tuples are denoted by angle brackets, such as the ordered pair $\langle a, b \rangle$ of elements~$a$ and~$b$.
		
The primary subject of our study is $F$-irregularity. 
Introduced by Chartrand, Holbert, Oellermann, and Swart~\cite{r1} as a generalization of the classical vertex degree, this concept was originally presented alongside an elegant yet deceptively simple conjecture.
\begin{definition}
	For any two graphs $F$ and $G$, the \textbf{\bm{$F$}-degree} of a vertex $v$ in $G$, denoted by $F\text{-}\deg_G(v)$, is the number of subgraphs of $G$ isomorphic to $F$ that contain $v$. A graph $G$ is called \textbf{\bm{$F$}-irregular} if all its vertices have pairwise distinct $F$-degrees; that is, 
	\[ 
	F\text{-}\deg_G(u) \neq F\text{-}\deg_G(v) \quad \text{for all distinct } u, v \in V(G). 
	\] 
\end{definition}	

\begin{conjecture}[\cite{r1}, 1987] \label{con1}
	For every connected graph $F$ of order at least $3$, there exists a nontrivial $F$-irregular graph. 
\end{conjecture} 		 

Here, a fascinating paradox arises. While no nontrivial graph can be $K_2$-irregular (i.e., with pairwise distinct vertex degrees; see, e.g.,~\cite{r2}), this impossibility appears to be a consequence of the structural simplicity of~$K_2$. Conjecture~\ref{con1} suggests that once the graph~$F$ attains sufficient complexity ($|F| \geq 3$), these classical constraints vanish, rendering $F$-irregularity potentially universal.

The validity of Conjecture~\ref{con1} has been established for several classes of graphs, including stars, complete graphs~\cite{r1}, and paths~\cite{r3}. For each $2$-connected graph~$F$ with minimum degree $\delta(F)=2$ (in particular, for cycles), Dovzhenok, Filuta, and Chuhai~\cite{r4} constructed infinite families of $F$-irregular graphs and proposed a more ambitious conjecture.

\begin{conjecture}[Strong Conjecture about $F$-irregular graphs, \cite{r4}, 2024] \label{con2}
	For each connected graph~$F$ of order $|F| \geq 3$, there exist infinitely many $F$-irregular graphs.
\end{conjecture}

Conjecture~\ref{con2} has been fully settled for graphs $F$ of diameter~$2$ by Dovzhenok~\cite{r5}. 
It also holds for the paths~$P_3$ and~$P_4$ in view of~\cite{r6} and~\cite{r3}, respectively.

In this paper, we confirm Conjecture~\ref{con2} for the entire family of paths~$P_n$ with~$n \geq 3$. 
We also investigate the concept of rooted $F$-degrees introduced in~\cite{gtwa}, focusing on the case where~$F = P_n$. 
In contrast to the classical $F$-degree, which counts all copies of~$F$ containing a given vertex, 
the rooted variant counts only those where the vertex plays a designated role. To our knowledge, this work provides the first systematic study of graphs exhibiting rooted $F$-irregularity.
 
\begin{definition}
	Let $F$ be a graph with a distinguished vertex $r \in V(F)$, referred to as the root. For any vertex $v \in V(G)$, its {\boldmath\textbf{rooted $F$-degree}} (also termed {\boldmath\textbf{$(F)_r$-degree}}), denoted by $(F)_r\text{-}\deg_G(v)$, is the number of subgraphs of $G$ isomorphic to $F$ where $v$ corresponds to the root~$r$. More formally, this value equals the number of subgraphs $H \subseteq G$ for which there exists an isomorphism $\phi \colon F \to H$ satisfying $\phi(r) = v$. 
	A graph $G$ is {\boldmath\textbf{$(F)_r$-irregular}} if all its vertices have pairwise distinct $(F)_r$-degrees; that is, 
	\[ (F)_r\text{-}\deg_G(u) \neq (F)_r\text{-}\deg_G(v) \quad \text{for all distinct } u, v \in V(G). \]
\end{definition}
	
Our central aim is the construction of graphs that are simultaneously $P_n$- and $(P_n)_r$-irregular across a wide range of distinct pairs~$\langle n, r \rangle$. This phenomenon constitutes a \textit{super-concentration of irregularities}, wherein a single structure satisfies a large system of independent asymmetry constraints. Our first main result establishes both the existence and the abundance of such highly non-uniform graphs.

\begin{theorem} \label{thm1}
	For each integer $k \geq 4$, there exists an infinite family of graphs that are simultaneously $P_n\text{-}$irregular and $(P_n)_r\text{-}$irregular for all integers $n \in \{4, 5, \dots, k\}$ and every root~$r$ of~$P_n$.
\end{theorem}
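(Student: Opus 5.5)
We will build the family from trees. The plan is to choose a tree $T$ with pairwise distinct ``local profiles'' and attach to it a large, steeply growing structure that forces every path-count to become irregular. Concretely, take a long spine path $(u_1,u_2,\dots,u_m)$ and hang at each $u_i$ a number $a_i$ of pendant stars or pendant paths of length $k$. The multiplicities $a_1<a_2<\dots<a_m$ grow very fast, say $a_{i+1} > C\,a_i^{k}$ for a constant $C=C(k)$. Letting $m$ (or the starting value $a_1$) vary will produce infinitely many graphs. The reason to use trees is that in a tree a copy of $P_n$ is determined by its two end-vertices at distance $n-1$. Hence $(P_n)_r\text{-}\deg(v)$, for a root $r$ at position $j$, counts the pairs of ``arms'' leaving $v$ in distinct directions with lengths $j-1$ and $n-j$. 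Similarly, $P_n\text{-}\deg(v)=\sum_r (P_n)_r\text{-}\deg(v)$, up to the symmetry $r\leftrightarrow n+1-r$. So all the quantities we need are explicit polynomials in the numbers $d_t(v,\text{branch})$ of vertices at distance $t$ from $v$ in each branch.

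The key steps, in order, are as follows.

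First, compute these polynomials for the three vertex types: spine vertices, vertices inside the hanging gadgets, and leaves. A convenient choice of gadget is a pendant path $P_{k}$ attached at one end, so that each gadget vertex has a known depth $s$ and hangs off a known $u_i$.

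Second, show that each count is dominated by a leading term. For $v$ at depth $s$ in a gadget at $u_i$, every count involving an arm that passes through $u_i$ is of order $a_i$ times a bounded factor, or of order $a_{i\pm1}$ if the arm reaches a neighbouring spine vertex. Because of the fast growth, the largest $a_\ell$ reachable within the available arm length dominates. Comparing the dominant terms then orders vertices lexicographically by (index of the largest reachable spine vertex, its exponent, depth).

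Third, break ties between vertices with equal leading data using secondary terms. This is where distinct depths $s$ within one gadget type matter.

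Fourth, handle the many gadget vertices that are genuine twins, i.e. copies of the same gadget at the same $u_i$. These are automorphic, so irregularity fails. To fix this, the gadgets at $u_i$ must all be pairwise non-isomorphic. We therefore use instead a \emph{caterpillar-like} design: at each $u_i$ hang a single long path $Q_i$ whose vertices themselves carry pendant leaves with distinct multiplicities. Then no two vertices are similar, and the growth hierarchy is spread along $Q_i$. Leaves hanging at the same vertex are still twins, so pendant leaves must be avoided entirely or replaced by pendant paths of distinct lengths. The cleanest version is a ``broom tree'' in which vertex $w$ of the graph carries pendant paths of pairwise distinct lengths $\ge k$ only where needed. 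Even then, the endpoints of the long pendant paths have small degrees, and among these we must compare counts using exact formulas rather than asymptotics.

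The main obstacle is this last point: the low-level vertices far out on pendant paths, where counts are small and exact. For them I would first try to make each pendant path long enough (length $\ge k$) and attached at a vertex with a distinct huge branching number. Then even a leaf's $(P_n)_r$-degree for $r$ an end-vertex depends on the multiplicity $a$ at distance $n-1$ unless $n-1$ exceeds its distance to the hub. Ordering the pendant lengths so that each leaf sees a distinct hub within distance $3$ (since $n\ge4$) should separate them. The case $n\ge4$ is exactly what guarantees every vertex has an arm of length $\ge3$ reaching a hub, which explains why $P_3$ is excluded here. Finally, verifying this uniformly for all $n\le k$ and all roots is bookkeeping once the growth constant $C(k)$ is fixed large enough.
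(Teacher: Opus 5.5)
\textbf{There is a fatal gap: the approach cannot work, because no tree is $(P_n)_r$-irregular for an internal root.}

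Theorem~\ref{thm1} requires $(P_n)_r$-irregularity for every root of $P_n$, including the internal ones $2\le r\le n-1$. This already includes $\langle n,r\rangle=\langle 4,2\rangle$. A vertex playing an internal root has degree $2$ in that copy of $P_n$. Hence every vertex of degree $1$ in $G$ has $(P_n)_r$-degree $0$ for every internal $r$. A tree of order at least $2$ has at least two leaves, so it always has two vertices with equal $(P_n)_r$-degree. No growth constant $C(k)$, choice of gadgets, or tie-breaking by secondary terms can repair this. In fact, any graph satisfying the theorem has at most one vertex of degree at most $1$. The paper's $\mathcal{W}_{2m}$ has exactly one such vertex, the vertex $2m$. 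It sits at the bottom of the chain for $r\neq 1$ precisely because its rooted degree is $0$ there.

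\textbf{The broom design also fails for the end-root $r=1$, at the point you flagged.}

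Take a pendant path of length $L\ge n-1$, measured from its hub to its terminal leaf. That leaf has exactly one vertex at distance $n-1$, so its $(P_n)_1$-degree is $1$. Any two such pendant paths therefore collide. With your requirement $L\ge k\ge n$, the hub's branching number is invisible to the leaf, contrary to what you assert.

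Interior vertices are no better. A vertex whose $(n-1)$-ball is a path has $(P_n)_1$-degree $2$ and $P_n$-degree $n$. So a single bare pendant path of length at least $2n-1$ already produces collisions for all of these counts.

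Your two requirements are also incompatible. You require pendant lengths $\ge k\ge 4$, but you also want each leaf to see a hub within distance $3$.

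You have misread why $P_3$ is excluded from the theorem. It has nothing to do with arm lengths. The reason is that $(P_3)_2\text{-}\deg(v)=\binom{\deg v}{2}$ is a function of the ordinary degree (Proposition~\ref{p1}), so two vertices of equal degree always collide.

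\textbf{What the paper does instead.} It uses a dense graph with nested neighbourhoods, in which consecutive vertices become twins after deleting one to three edges. Relation~\eqref{eq1} then reduces each difference $Y_u-Y_v$ to counting paths through the deleted edges. These counts are compared by explicit injections and by binomial estimates such as $\binom{m-4}{n-3}>9n!\binom{2m-4}{n-4}$, which the $k$-condition guarantees.
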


Regarding the path~$P_3$, it is straightforward to observe that no nontrivial $(P_3)_r\text{-}$irregular graphs exist if $r$ is the central vertex. Conversely, when $r$ is an end-vertex, an analogue of Theorem~\ref{thm1} holds.

\begin{theorem} \label{thm2}
	There exists an infinite family of graphs that are simultaneously $P_3$\nobreakdash-irregular and $(P_3)_r$\nobreakdash-irregular, where $r$ is an end-vertex of~$P_3$.
\end{theorem}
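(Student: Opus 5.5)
The plan is to translate both irregularity conditions into purely degree-theoretic quantities and then build graphs in which these quantities are forced to be strictly monotone along a fixed ordering of the vertices. For a vertex $v$ of $G$ with $r$ an end-vertex of $P_3$, a copy of $P_3$ with $v$ as an end consists of an edge $vu$ followed by an edge $uw$ with $w\neq v$. Hence
\[
(P_3)_r\text{-}\deg_G(v)=\sum_{u\in N_G(v)}\bigl(\deg_G(u)-1\bigr)=:e(v),
\]
and, counting also the copies with $v$ as centre,
\[
P_3\text{-}\deg_G(v)=\binom{\deg_G(v)}{2}+e(v).
\]
So it suffices to find infinitely many graphs admitting an ordering $v_1,\dots,v_N$ of the vertices such that $\deg$ is non-decreasing and $e$ is strictly increasing. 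Then both $e$ and $\binom{\deg}{2}+e$ are strictly increasing along the ordering, hence injective.

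For the construction I would take the classical threshold (half) graph, which is almost degree-irregular, and repair its single defect. Let $m=2s$ and take vertices $x_1,\dots,x_m$ with $x_ix_j\in E$ iff $i\neq j$ and $i+j>m$. Then $\deg(x_i)=i$ for $i\le s$ and $\deg(x_i)=i-1$ for $i>s$. The only repeated degree is at the adjacent twins $x_s,x_{s+1}$, for which necessarily $e(x_s)=e(x_{s+1})$. To break this tie, I attach one new pendant vertex $y$ to $x_{s+1}$, and the family is $\{G_s\}_{s\ge s_0}$. In $G_s$ the degrees become: $y$ and $x_1$ of degree $1$; $x_i$ of degree $i$ for $i\le s+1$; and $x_i$ of degree $i-1$ for $i\ge s+2$. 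Thus there is again exactly one coincident pair, $\{y,x_1\}$, but now it is not a twin pair. Indeed
\[
e(y)=\deg(x_{s+1})-1=s,\qquad e(x_1)=\deg(x_m)-1=2s-2,
\]
so ordering $y$ before $x_1$ gives strict increase on this pair once $s\ge3$.

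The main computation, and the step I expect to be the real obstacle, is checking that $e(x_i)$ is strictly increasing in $i$ and exceeds $e(x_1)$ from $i=2$ onward. I would write
\[
e(x_i)=\sum_{j>m-i,\ j\neq i}\deg(x_j)\;-\;\deg(x_i)\;+\;[\,i>m-s-1\,],
\]
where the last term is the correction coming from $x_{s+1}$ having gained the neighbour $y$. Passing from $i$ to $i+1$ adds the neighbour $x_{m-i}$, whose degree is large. This increment $\deg(x_{m-i})$ should dominate the change $\deg(x_{i+1})-\deg(x_i)\le 1$, the $\pm1$ correction terms, and the removal or insertion of $x_i$ itself from the index range near the middle $i\approx s$. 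The delicate region is around $i\in\{s-1,s,s+1,s+2\}$, where the self-exclusion $j\neq i$, the shift in the degree formula and the pendant correction all occur simultaneously. There I would compute $e(x_{s-1}),\dots,e(x_{s+2})$ explicitly as polynomials in $s$ and compare them directly.

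For the small-index end one also checks $e(x_2)>e(x_1)$; here $x_2$ is adjacent to $x_{m-1},x_m$, so $e(x_2)\approx 4s$. If some inequality fails in the middle, the fallback is to attach the pendant vertex to $x_s$ instead, or to use two pendants. Either choice keeps all changes local and can be rechecked by the same explicit computation. Once monotonicity is verified for all $s\ge s_0$, every $G_s$ is simultaneously $P_3$-irregular and $(P_3)_r$-irregular, and the graphs are pairwise non-isomorphic since $|G_s|=2s+1$. This gives the infinite family required by Theorem~\ref{thm2}.
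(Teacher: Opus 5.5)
Your reduction is sound: $(P_3)_1\text{-}\deg_G(v)=\sum_{u\in N_G(v)}(\deg_G(u)-1)$ and $P_3\text{-}\deg_G(v)=\binom{\deg_G(v)}{2}+(P_3)_1\text{-}\deg_G(v)$. A single ordering with non-decreasing degree and strictly increasing $e$ would indeed suffice.

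\textbf{The gap is the graph.} $G_s$ is never $(P_3)_1$-irregular, and the fatal failure is at the top of the clique $\{x_{s+1},\dots,x_{2s}\}$, not in the middle. Your heuristic that the new neighbour $x_{m-i}$ ``has large degree'' is false for $i>s$, where $\deg(x_{m-i})=m-i$ is small. Write $c_j=\deg(x_j)-1$. For $s+2\le i\le m-1$,
\[
e(x_{i+1})-e(x_i)=c_{m-i}-(c_{i+1}-c_i)=m-i-2,
\]
which is $0$ at $i=m-2$ and $-1$ at $i=m-1$. Directly: for $s\ge 4$ the neighbourhoods of $x_{m-2}$ and $x_{m-1}$ differ only in $x_2,x_{m-2},x_{m-1}$. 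Hence $e(x_{m-1})-e(x_{m-2})=c_2+c_{m-2}-c_{m-1}=1+(2s-4)-(2s-3)=0$. For $s=4$ the values of $e$ on $y,x_1,\dots,x_8$ are $4,6,11,15,19,18,20,20,19$, with two collisions. For $s=3$ one has $e(x_3)=e(x_5)$, and for $s\le 2$ already $e(y)=e(x_1)$. In the middle you also get $e(x_{s+1})=e(x_s)-1$. Finally, $\deg(x_{s+1})=\deg(x_{s+2})=s+1$, contrary to your ``exactly one coincident pair''; that slip alone is harmless.

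\textbf{Your fallbacks cannot repair this.} $x_s$ and $x_{s+1}$ are closed twins, so hanging $y$ on $x_s$ gives an isomorphic graph. Pendants on middle vertices change neither $c_2,c_{m-2},c_{m-1}$ nor the equal contributions of common neighbours, so $e(x_{m-2})=e(x_{m-1})$ persists.

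\textbf{What is missing.} The lowest vertices $x_1,x_2$, which the top clique vertices acquire last, must have degree at least $3$. Your half graph is in fact exactly the threshold graph underlying the paper's $\mathcal{T}_{2m+1}$ with $m=s$: your $x_{s+k}$ is the paper's upper vertex $k$, and your $x_i$, $i\le s$, is its lower vertex $2s-i+1$. The paper breaks the twin pair differently. It joins $x_{s+1}$ to $x_1$ and $x_2$ (edges $(1,2m)$ and $(1,2m-1)$) and hangs the pendant on $x_1$ (edge $(2m,2m+1)$). This gives $\deg(x_1)=\deg(x_2)=3$, so the increment $X_{i+1}-X_i=\deg(m+i+1)-2$ in Lemma~\ref{lemma7} stays positive up to $i=m-1$. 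The price is that vertex $1$ (your $x_{s+1}$) then has its degree out of order along the $(P_3)_1$-ordering. That is why the paper needs the separate Lemma~\ref{lemma8} for the $P_3$-degree of vertex $1$.
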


For brevity, we identify the root $r = v_i$ of the path $P_n = (v_1, v_2, \dots, v_n)$ directly with its index~$i$. By exploiting symmetry, we can, without loss of generality, assume that $r \in \{1, 2, \dots, \lceil n/2 \rceil\}$.

The paper is organized as follows. Section~2 introduces a methodology for comparing $(P_n)_r$-degrees. Section~3 investigates $P_3$- and $(P_3)_r$-irregularity, providing the proof of Theorem~\ref{thm2}. Section~4 addresses the case $n \ge 4$ and presents a constructive proof of Theorem~\ref{thm1}. Finally, Section~5 summarizes our findings and proposes an analogue of the Strong Conjecture for $(F)_r$-irregular graphs.

\section{Methodology for comparing vertex $(P_n)_r$-degrees}

For a vertex $u$ of a graph $G$ and a root $r$ of the path $P_n$, let $\mathcal{P}_n(G, u, r)$ denote the set of all subgraphs of $G$ isomorphic to $P_n$ containing $u$, in which $u$ corresponds to the root~$r$. By the definition of $(P_n)_r$-degree, we have
\[
(P_n)_r\text{-}\deg_G(u) = |\mathcal{P}_n(G, u, r)|.
\]

To compare the $(P_n)_r$-degrees of two distinct vertices $u_1, u_2 \in V(G)$, we identify a subset of edges $\mathcal{E} \subset E(G)$ such that $u_1$ and $u_2$ are symmetric in the subgraph $G_1 = G \setminus \mathcal{E}$. This symmetry implies the existence of an automorphism of $G_1$ that maps $u_1$ to $u_2$. In our constructions, this condition is guaranteed by ensuring that $u_1$ and $u_2$ have either identical neighborhoods, $N_{G_1}(u_1) = N_{G_1}(u_2)$, or identical closed neighborhoods, $N_{G_1}[u_1] = N_{G_1}[u_2]$. Consequently, 
\[
(P_n)_r\text{-}\deg_{G_1}(u_1) = (P_n)_r\text{-}\deg_{G_1}(u_2),
\]
and the difference between the $(P_n)_r$-degrees of $u_1$ and $u_2$ in $G$ satisfies
\begin{equation} \label{eq1}
	(P_n)_r\text{-}\deg_G(u_1) - (P_n)_r\text{-}\deg_G(u_2) = |\mathcal{A}| - |\mathcal{B}|,
\end{equation}
where the sets $\mathcal{A}$ and $\mathcal{B}$ are defined as 
\begin{align*}
	\mathcal{A} &= \{F \in \mathcal{P}_n(G, u_1, r) \mid E(F) \cap \mathcal{E} \neq \emptyset\}, \\
	\mathcal{B} &= \{H \in \mathcal{P}_n(G, u_2, r) \mid E(H) \cap \mathcal{E} \neq \emptyset\}.
\end{align*}

Thus, comparing $(P_n)_r$-degrees reduces to evaluating the relative cardinalities of $\mathcal{A}$ and $\mathcal{B}$. For small $n$, this is achieved by direct calculation or simple estimates. For larger $n$, we employ a hybrid approach by partitioning $\mathcal{A}$ and $\mathcal{B}$ into subfamilies whose sizes are compared either through injective mappings or combinatorial estimates involving binomial coefficients. These combinatorial bounds are valid under certain constraints, primarily when $|G|$ is sufficiently large.
	
\section{The Case $n=3$}
We begin our study by considering the simplest case, $n=3$. We examine two scenarios: $r=2$, where the root is the central vertex of $P_3$, and $r=1$, where it is an end-vertex.

\subsection{$(P_3)_2$-irregular graphs}

\begin{proposition}  \label{p1}
	No nontrivial graph is $(P_3)_2$-irregular.
\end{proposition}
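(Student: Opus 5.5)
The plan is to compute $(P_3)_2$-degrees explicitly and reduce the statement to the classical fact that no nontrivial graph has pairwise distinct vertex degrees. A copy of $P_3$ in which $v$ plays the central vertex is determined by an unordered pair of distinct neighbours of $v$. Indeed, the subgraph $(a,v,b)$ with $a,b\in N_G(v)$, $a\neq b$, is a path of order $3$ whose centre is $v$. Conversely, any such subgraph with $v$ as the image of the central vertex has this form, and the pair $\{a,b\}$ determines the subgraph uniquely. Hence
\[
(P_3)_2\text{-}\deg_G(v)=\binom{\deg_G(v)}{2}.
\]

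The function $d\mapsto \binom{d}{2}$ is not injective on nonnegative integers only because $\binom{0}{2}=\binom{1}{2}=0$. It is strictly increasing for $d\ge 1$. So two vertices $u\neq v$ have equal $(P_3)_2$-degrees precisely when either $\deg_G(u)=\deg_G(v)$, or $\{\deg_G(u),\deg_G(v)\}=\{0,1\}$. In particular, if $G$ were $(P_3)_2$-irregular, then all vertex degrees of $G$ would be pairwise distinct, i.e.\ $G$ would be $K_2$-irregular.

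It remains to invoke the standard pigeonhole argument mentioned in the introduction (see~\cite{r2}) to rule this out for $|G|=m\ge 2$. All degrees lie in $\{0,1,\dots,m-1\}$, which has exactly $m$ values. Distinct degrees would force both $0$ and $m-1$ to occur. This is impossible, since a vertex of degree $m-1$ is adjacent to every other vertex, including the one of degree $0$. Hence two vertices share a degree and therefore share the $(P_3)_2$-degree.

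No step is a real obstacle. The only point needing care is the bijection between rooted copies and neighbour pairs: one must note that $(a,v,b)$ and $(b,v,a)$ are the same subgraph, so pairs are counted unordered.
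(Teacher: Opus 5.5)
Your proposal is correct and follows the paper's proof: both establish $(P_3)_2\text{-}\deg_G(v)=\binom{\deg_G(v)}{2}$ via the bijection with unordered pairs of neighbours, then use the fact that every nontrivial graph has two vertices of equal degree. Your remark about $\binom{0}{2}=\binom{1}{2}$ is harmless but unnecessary, since only the implication ``equal degrees $\Rightarrow$ equal $(P_3)_2$-degrees'' is needed; your pigeonhole argument just makes explicit the classical fact that the paper cites.
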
	

\begin{proof}
	Let $G$ be an arbitrary nontrivial graph and let $v \in V(G)$. Every pair of distinct vertices $\{u_1, u_2\} \subseteq N_G(v)$ uniquely determines a subgraph isomorphic to $P_3$ centered at $v$, namely the path $(u_1, v, u_2)$. Conversely, any such path centered at $v$ is uniquely determined by its end-vertices in $N_G(v)$. This bijective correspondence yields
	\begin{equation} \label{eq2}
		(P_3)_2\text{-}\deg_G(v) = \binom{\deg_G(v)}{2}.
	\end{equation}
	Since $G$ contains at least two vertices of the same degree, by~\eqref{eq2} their $(P_3)_2$-degrees must also be equal. Consequently, $G$ cannot be $(P_3)_2$-irregular.
\end{proof}

\subsection{$(P_3)_1$-irregular graphs}
In contrast to the case $r=2$, the $(P_3)_1$\nobreakdash-degree of a vertex $v$ in a graph $G$ depends not only on $\deg_G(v)$ but also on the degrees of its neighbors in $G$. In this subsection, we construct an infinite family of graphs that are simultaneously $P_3$\nobreakdash-irregular and $(P_3)_1$\nobreakdash-irregular.
		
\subsubsection{Graph $\mathcal{T}_{2m+1}$}
Throughout this subsection, we let $m$ be an arbitrary but fixed integer such that $m \ge 6$.
\begin{definition}
	The graph $\mathcal{T}_{2m+1}$ of order $2m+1$ is defined by the vertex set
	\[
	V(\mathcal{T}_{2m+1}) = V_1 \cup V_2 \cup \{2m+1\},
	\]
	where $V_1 = \{1, 2, \dots, m\}$ and $V_2 = \{m+1, m+2, \dots, 2m\}$, and the edge set
\begin{align*}
	E(\mathcal{T}_{2m+1}) &= \{(i, j) \mid i, j \in V_1, \ i \neq j\} \\
	&\quad \cup \{(i, j) \mid i \in V_1, \ j \in V_2, \ j - i \leq m\} \\
	&\quad \cup \{(1, 2m-1), (1, 2m), (2m, 2m+1)\}.
\end{align*}
\end{definition}

To visualize the graph $\mathcal{T}_{2m+1}$, we arrange its vertices on two horizontal levels (see Fig.~1). The vertices of $V_1$ and $V_2 \cup \{2m+1\}$ are placed on the upper and lower levels, respectively, in increasing order of their indices from left to right. In particular, each vertex $i \in V_1$ is positioned directly above the vertex $i+m \in V_2$.
	
In this layout, the vertices on the upper level induce a clique. Each vertex $j \in V_2$ is adjacent to the vertex $j-m$ located directly above it, as well as to all vertices on the upper level to the right of $j-m$. The structure is completed by the three specific edges: $(1, 2m-1)$, $(1, 2m)$, and $(2m, 2m+1)$. 
\medskip

For each $v \in V(\mathcal{T}_{2m+1})$, let $X_v$ denote the $(P_3)_1$\nobreakdash-degree of vertex $v$ in $\mathcal{T}_{2m+1}$.

\begin{figure}[ht]
	\centering
	\includegraphics[width=13.5cm]{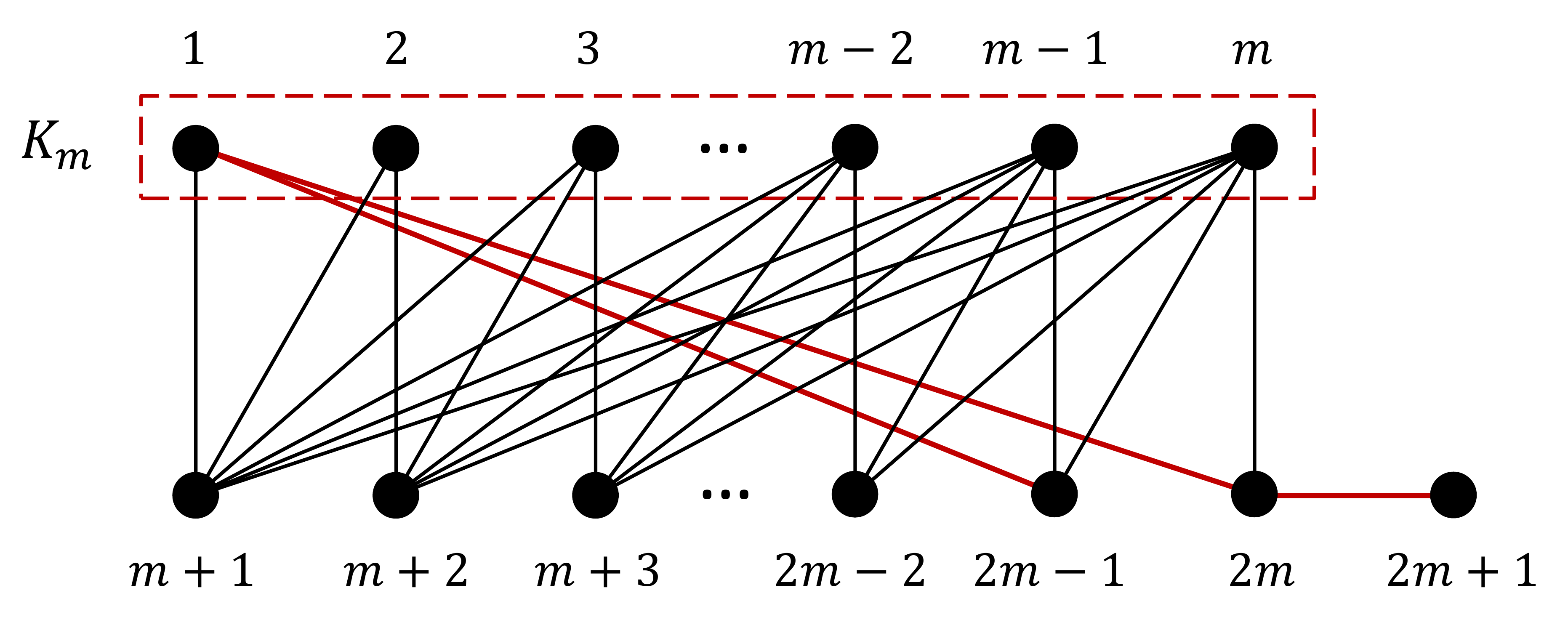}
	\caption{Graph $\mathcal{T}_{2m+1}$.}
	\label{fig1}
\end{figure}

\medskip

In Lemmas~\ref{lemma1}--\ref{lemma7}, we compare the $(P_3)_1$-degrees of vertices in the graph~$\mathcal{T}_{2m+1}$. Although these values can be obtained by direct calculation, the methodology described in Section~2 enables a more elegant and straightforward comparison in all cases, with the sole exception of Lemma~\ref{lemma1}.

\begin{lemma} \label{lemma1}
	$X_{2m} > X_{2m+1}$.
\end{lemma}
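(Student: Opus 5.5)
Both quantities are computed directly from the structure of $\mathcal{T}_{2m+1}$. The method of Section~2 is not needed here: vertices $2m$ and $2m+1$ are not symmetric in any convenient subgraph.

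The basic observation is that a $P_3$ rooted at an end-vertex $v$ is a path $(v,w,x)$ with $w\in N(v)$ and $x\in N(w)\setminus\{v\}$. Each such path is determined by the ordered choice of $w$ and $x$. Hence
\[
X_v=\sum_{w\in N(v)}\bigl(\deg(w)-1\bigr).
\]
The whole proof therefore reduces to reading off neighborhoods and degrees.

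\textbf{Vertex $2m+1$.} Its only neighbor is $2m$, so $X_{2m+1}=\deg(2m)-1$. To find $\deg(2m)$, note that vertex $2m\in V_2$ is adjacent to the vertices $i\in V_1$ with $2m-i\le m$, i.e.\ $i\ge m$. This gives the single vertex $m$. It is also adjacent to $1$ through the special edge $(1,2m)$ and to $2m+1$. Thus $\deg(2m)=3$ and $X_{2m+1}=2$.

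\textbf{Vertex $2m$.} Its neighbors are $m$, $1$ and $2m+1$, so
\[
X_{2m}=(\deg(m)-1)+(\deg(1)-1)+(\deg(2m+1)-1).
\]
Here $\deg(2m+1)-1=0$. Vertex $1$ is adjacent to all of $V_1\setminus\{1\}$, which already gives $m-1$ neighbors. Vertex $m$ is likewise adjacent to the other $m-1$ clique vertices, so $\deg(m)\ge m-1$. Consequently
\[
X_{2m}\ge 2(m-2)\ge 8>2=X_{2m+1},
\]
using $m\ge 6$. The inequality needs only $m\ge 3$.

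The only real work is correctly identifying $N(2m)$ from the adjacency rule $j-i\le m$ together with the special edges. Once that is done, the lower bound from the clique on $V_1$ settles the comparison without computing exact degrees.
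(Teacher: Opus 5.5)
Your proof is correct and is essentially the paper's argument: both obtain $X_{2m+1}=\deg(2m)-1=2$ exactly from $N(2m)=\{1,m,2m+1\}$, and both then need only a crude lower bound on $X_{2m}$. The paper exhibits three explicit paths through $m$, while you use the clique on $V_1$ via $X_v=\sum_{w\in N(v)}(\deg(w)-1)$. One small slip in your closing remark: your bound $2(m-2)>2$ requires $m\ge 4$, not $m\ge 3$, although the exact value $X_{2m}=3m-1$ still gives the inequality at $m=3$. This is irrelevant here, since $m\ge 6$.
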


\begin{proof}
	Observe that $X_{2m+1} = 2$, as there are exactly two paths isomorphic to $P_3$ in $\mathcal{T}_{2m+1}$ originating at $2m+1$, namely $(2m+1, 2m, 1)$ and $(2m+1, 2m, m)$. On the other hand, there are at least three such paths originating at $2m$, for instance, $(2m, m, 1)$, $(2m, m, 2)$, and $(2m, m, 3)$. This implies $X_{2m} \ge 3$, which immediately yields the desired inequality.
\end{proof}

\begin{lemma} \label{lemma2}
	$X_{2m-1} > X_{2m}$.
\end{lemma}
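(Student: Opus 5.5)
The plan is to use the method of Section~2: find a small edge set $\mathcal{E}$ whose removal makes $2m-1$ and $2m$ have identical neighborhoods, and then compare $|\mathcal{A}|$ and $|\mathcal{B}|$ in~\eqref{eq1}.

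First I read off the neighborhoods from the definition of $\mathcal{T}_{2m+1}$. A vertex $j\in V_2$ is adjacent to $i\in V_1$ exactly when $i\ge j-m$.
\begin{itemize}
\item For $j=2m$ this gives only $i=m$. Adding the special edges $(1,2m)$ and $(2m,2m+1)$ yields $N(2m)=\{1,m,2m+1\}$.
\item For $j=2m-1$ it gives $i\in\{m-1,m\}$. Adding $(1,2m-1)$ yields $N(2m-1)=\{1,m-1,m\}$.
\end{itemize}
I take $\mathcal{E}=\{(2m-1,m-1),\,(2m,2m+1)\}$. In $G_1=\mathcal{T}_{2m+1}\setminus\mathcal{E}$ both vertices have neighborhood $\{1,m\}$, and they are non-adjacent. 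Hence they are symmetric in $G_1$, and~\eqref{eq1} gives $X_{2m-1}-X_{2m}=|\mathcal{A}|-|\mathcal{B}|$.

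Next I count $\mathcal{A}$, the paths $(2m-1,y,z)$ using an edge of $\mathcal{E}$.
\begin{itemize}
\item If the first edge lies in $\mathcal{E}$, then $y=m-1$ and $z$ ranges over $N(m-1)\setminus\{2m-1\}$. The vertex $m-1$ is adjacent to the other $m-1$ clique vertices. It is also adjacent to those $j\in V_2$ with $j\le 2m-1$, which are again $m-1$ vertices. So $\deg(m-1)=2m-2$, and this contributes $2m-3$ paths.
\item If the second edge lies in $\mathcal{E}$, then $y$ must be a neighbor of $2m-1$ lying on an edge of $\mathcal{E}$ other than through $2m-1$. Neither $2m$ nor $2m+1$ is a neighbor of $2m-1$, so this contributes nothing.
\end{itemize}
Thus $|\mathcal{A}|=2m-3$.

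Then I count $\mathcal{B}$, the paths $(2m,y,z)$ using an edge of $\mathcal{E}$.
\begin{itemize}
\item If the first edge is $(2m,2m+1)$, a path would need a second neighbor of $2m+1$. But $\deg(2m+1)=1$, so there is none.
\item If the second edge lies in $\mathcal{E}$, then $y\in\{1,m\}$ would have to be an endpoint of an edge of $\mathcal{E}$. Neither $1$ nor $m$ is, so there is nothing here either.
\end{itemize}
Thus $|\mathcal{B}|=0$. It follows that $X_{2m-1}-X_{2m}=2m-3>0$.

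The only delicate step is the bookkeeping: making sure that no path in $\mathcal{A}$ or $\mathcal{B}$ whose edge from $\mathcal{E}$ is the second edge has been missed, and that the degree of $m-1$ is computed correctly. Even a cruder lower bound on $|\mathcal{A}|$ suffices, since $|\mathcal{B}|=0$.
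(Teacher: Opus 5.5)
Your proof is correct and follows the paper's argument: you delete the same edge set $\{(m-1,2m-1),(2m,2m+1)\}$, obtain the same common neighborhood $\{1,m\}$, and reach the same conclusion $\mathcal{B}=\emptyset$. The only difference is that you count $|\mathcal{A}|=2m-3$ exactly, whereas the paper just exhibits the single path $(2m-1,m-1,1)$ to show that $\mathcal{A}\neq\emptyset$.
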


\begin{proof}
	Let $\mathcal{E}_1 = \{(m-1, 2m-1), (2m, 2m+1)\}$ and $G_1 = \mathcal{T}_{2m+1} \setminus \mathcal{E}_1$. The vertices $2m-1$ and $2m$ are symmetric in $G_1$ since their neighborhoods are identical:
	\[
	N_{G_1}(2m-1) = N_{G_1}(2m) = \{1, m\}.
	\]
	By \eqref{eq1}, the difference between the $(P_3)_1$-degrees of $2m-1$ and $2m$ in $\mathcal{T}_{2m+1}$ simplifies to
	\begin{align*}
		X_{2m-1} - X_{2m} &= |\mathcal{A}| - |\mathcal{B}|, \\
		\mathcal{A} &= \{F \in \mathcal{P}_3(\mathcal{T}_{2m+1}, 2m-1, 1) \mid E(F) \cap \mathcal{E}_1 \neq \emptyset \}, \\
		\mathcal{B} &= \{H \in \mathcal{P}_3(\mathcal{T}_{2m+1}, 2m, 1) \mid E(H) \cap \mathcal{E}_1 \neq \emptyset \}.
	\end{align*}
	Note that $\mathcal{B}$ is empty because no path isomorphic to $P_3$ in $\mathcal{T}_{2m+1}$ originating at $2m$ contains an edge from $\mathcal{E}_1$. In contrast, $\mathcal{A}$ is non-empty, as it contains the path $(2m-1, m-1, 1)$. Thus, $|\mathcal{A}| > |\mathcal{B}|$, which implies $X_{2m-1} > X_{2m}$.
\end{proof}

\begin{lemma} \label{lemma3}	
	$X_{2m-2} > X_{2m-1}$.
\end{lemma}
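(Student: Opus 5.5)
The plan is to apply the methodology of Section~2, exactly as in the proof of Lemma~\ref{lemma2}. First I record the relevant neighborhoods in $\mathcal{T}_{2m+1}$. A vertex $j\in V_2$ is adjacent to those $i\in V_1$ with $i\ge j-m$, and the three special edges are added on top of this. Hence
\[
N(2m-2)=\{m-2,m-1,m\},\qquad N(2m-1)=\{1,m-1,m\}.
\]
I take $\mathcal{E}_2=\{(m-2,2m-2),(1,2m-1)\}$ and $G_1=\mathcal{T}_{2m+1}\setminus\mathcal{E}_2$. Then $N_{G_1}(2m-2)=N_{G_1}(2m-1)=\{m-1,m\}$, so the two vertices are symmetric in $G_1$. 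By~\eqref{eq1}, $X_{2m-2}-X_{2m-1}=|\mathcal{A}|-|\mathcal{B}|$, where $\mathcal{A}$ (resp.\ $\mathcal{B}$) consists of the $P_3$'s rooted at an end-vertex $2m-2$ (resp.\ $2m-1$) that use an edge of $\mathcal{E}_2$.

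Next I count both sets. The root is an end-vertex, so any edge of $\mathcal{E}_2$ used by such a path must contain the root. The edge $(1,2m-1)$ does not contain $2m-2$, and the edge $(m-2,2m-2)$ does not contain $2m-1$. So every path in $\mathcal{A}$ has the form $(2m-2,m-2,x)$, and every path in $\mathcal{B}$ has the form $(2m-1,1,x)$.

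For $\mathcal{A}$, the vertex $x$ ranges over $N(m-2)\setminus\{2m-2\}$. The set $N(m-2)$ contains the $m-1$ other clique vertices and the vertices $m+1,\dots,2m-2$ of $V_2$; vertex $m-2$ is not incident to any special edge. Removing $2m-2$ gives $|\mathcal{A}|=(m-1)+(m-3)=2m-4$.

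For $\mathcal{B}$, the vertex $x$ ranges over $N(1)\setminus\{2m-1\}$. The set $N(1)$ consists of the $m-1$ other clique vertices, the vertex $m+1$ (the only $j\in V_2$ with $j-1\le m$), and the special neighbours $2m-1$ and $2m$. Removing $2m-1$ gives $|\mathcal{B}|=m+1$.

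Finally, $|\mathcal{A}|-|\mathcal{B}|=m-5>0$ because $m\ge 6$, which gives $X_{2m-2}>X_{2m-1}$. The only delicate step is the neighborhood bookkeeping, in particular remembering that vertex $1$ has the special neighbours $2m-1$ and $2m$. That count is what makes the hypothesis $m\ge 6$ necessary: with $m=5$ the two counts would be equal.
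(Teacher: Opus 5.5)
Your proposal is the paper's proof essentially verbatim. It uses the same removed edges $\{(1,2m-1),(m-2,2m-2)\}$, the same common neighborhood $\{m-1,m\}$, the same explicit families $\mathcal{A}=\{(2m-2,m-2,u)\}$ and $\mathcal{B}=\{(2m-1,1,v)\}$, and the same counts $2m-4$ and $m+1$, giving the difference $m-5>0$.

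One justification needs repair. A $P_3$ rooted at an end-vertex $v$ has the form $(v,y,x)$, and its second edge $(y,x)$ does \emph{not} contain $v$. So the inference ``the root is an end-vertex, hence any edge of $\mathcal{E}_2$ it uses must contain the root'' is false in general. The correct reason is that the middle vertex $y$ is a neighbor of the root. For $\mathcal{A}$, $y\in N(2m-2)=\{m-2,m-1,m\}$, which is disjoint from $\{1,2m-1\}$, so $(1,2m-1)$ cannot occur as $(y,x)$. For $\mathcal{B}$, $y\in N(2m-1)=\{1,m-1,m\}$, which is disjoint from $\{m-2,2m-2\}$ because $m-2>1$, so $(m-2,2m-2)$ cannot occur as $(y,x)$. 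This fix uses only the neighborhoods you already recorded, and with it the argument is complete.
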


\begin{proof}
	Let $\mathcal{E}_2 = \{(1, 2m-1), (m-2, 2m-2)\}$ and $G_2 = \mathcal{T}_{2m+1} \setminus \mathcal{E}_2$. The vertices $2m-2$ and $2m-1$ possess identical neighborhoods in $G_2$:
	\[
	N_{G_2}(2m-2) = N_{G_2}(2m-1) = \{m-1, m\},
	\]
	which renders them symmetric in this subgraph. Consequently, applying \eqref{eq1} yields
	\begin{align*}
		X_{2m-2} - X_{2m-1} &= |\mathcal{A}| - |\mathcal{B}|, \\
		\mathcal{A} &= \{F \in \mathcal{P}_3(\mathcal{T}_{2m+1}, 2m-2, 1) \mid E(F) \cap \mathcal{E}_2 \neq \emptyset \}, \\
		\mathcal{B} &= \{H \in \mathcal{P}_3(\mathcal{T}_{2m+1}, 2m-1, 1) \mid E(H) \cap \mathcal{E}_2 \neq \emptyset \}.
	\end{align*}
	Explicitly, these sets can be expressed as
	\begin{align*}
		\mathcal{A} &= \{(2m-2, m-2, u) \mid u \in \{1, 2, \dots, 2m-3\} \setminus \{m-2\}\}, \\
		\mathcal{B} &= \{(2m-1, 1, v) \mid v \in \{2, 3, \dots, m+1\} \cup \{2m\}\}.
	\end{align*}	
	Since $|\mathcal{A}| = 2m-4$ and $|\mathcal{B}| = m+1$, the difference $|\mathcal{A}| - |\mathcal{B}| = m-5$ is strictly positive, as $m \geq 6$. Hence, $X_{2m-2} > X_{2m-1}$.
\end{proof}

\begin{lemma} \label{lemma4} 
	For each $i \in \{m+1, m+2, \dots, 2m-3\}$, the following inequality holds: 
	\begin{equation*}
		X_{i} > X_{i+1}.
	\end{equation*}
\end{lemma}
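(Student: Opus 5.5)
Write $j=i-m$, so that $j\in\{1,2,\dots,m-3\}$; I would argue exactly as in Lemmas~\ref{lemma2} and~\ref{lemma3}, using the methodology of Section~2. First I would read off the neighborhoods from the definition of $\mathcal{T}_{2m+1}$. The three special edges $(1,2m-1)$, $(1,2m)$, $(2m,2m+1)$ only touch vertices $2m-1$, $2m$, $2m+1$. Since $i, i+1\le 2m-2$, neither $i$ nor $i+1$ is incident to any of them. Hence
\[
N_{\mathcal{T}_{2m+1}}(i)=\{j,j+1,\dots,m\},\qquad N_{\mathcal{T}_{2m+1}}(i+1)=\{j+1,\dots,m\}.
\]
So $N(i)=N(i+1)\cup\{j\}$.

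I would take $\mathcal{E}_3=\{(j,i)\}$ and $G_3=\mathcal{T}_{2m+1}\setminus\mathcal{E}_3$. Then $N_{G_3}(i)=N_{G_3}(i+1)=\{j+1,\dots,m\}$, so $i$ and $i+1$ are symmetric in $G_3$. By \eqref{eq1},
\[
X_i-X_{i+1}=|\mathcal{A}|-|\mathcal{B}|,
\]
where $\mathcal{A}$ consists of the $P_3$'s starting at $i$ that use the edge $(j,i)$, and $\mathcal{B}$ consists of those starting at $i+1$ that use it.

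Next I would show $\mathcal{B}=\emptyset$. A path $(i+1,x,y)$ would need $(j,i)$ to be either $(i+1,x)$ or $(x,y)$. The first is impossible since $i+1\notin\{i,j\}$. The second forces $x\in\{i,j\}$. But $x=i$ fails because $i$ and $i+1$ are both in $V_2$ and hence nonadjacent. And $x=j$ fails because $j<j+1=(i+1)-m$, so $j\notin N(i+1)$.

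Finally I would show $\mathcal{A}$ is nonempty; it consists of the paths $(i,j,u)$ with $u\in N(j)\setminus\{i\}$. Since $V_1$ is a clique with $m\ge 6$ vertices, $j$ has neighbors such as $m$, giving the path $(i,j,m)$. Thus $|\mathcal{A}|>0=|\mathcal{B}|$, which yields $X_i>X_{i+1}$. There is no real obstacle here; the only point needing care is confirming that the special edges do not affect the range $i\le 2m-3$, and that is exactly why the range stops there.
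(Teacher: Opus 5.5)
Your proof is correct and follows the paper's argument: the same edge set $\mathcal{E}_3=\{(i-m,i)\}$, the same symmetry of $i$ and $i+1$ in $G_3$, the observation that $\mathcal{B}=\emptyset$, and the witness path $(i,i-m,m)\in\mathcal{A}$. One harmless slip: the three special edges also touch vertex $1$, not only $2m-1$, $2m$ and $2m+1$; what your argument actually uses is that $i,i+1\le 2m-2$ are not incident to any of them, and that is correct.
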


\begin{proof}
	Fix $i \in \{m+1, m+2, \dots, 2m-3\}$. Let $\mathcal{E}_3 = \{(i-m, i)\}$ and $G_3 = \mathcal{T}_{2m+1} \setminus \mathcal{E}_3$ (see Fig.~2). In $G_3$, the vertices $i$ and $i+1$ are symmetric since their neighborhoods coincide:
	\[
	N_{G_3}(i) = N_{G_3}(i+1) = \{i-m+1, i-m+2, \dots, m\}.
	\]
	
	\begin{figure}[ht]
		\centering
		\includegraphics[width=13.5cm]{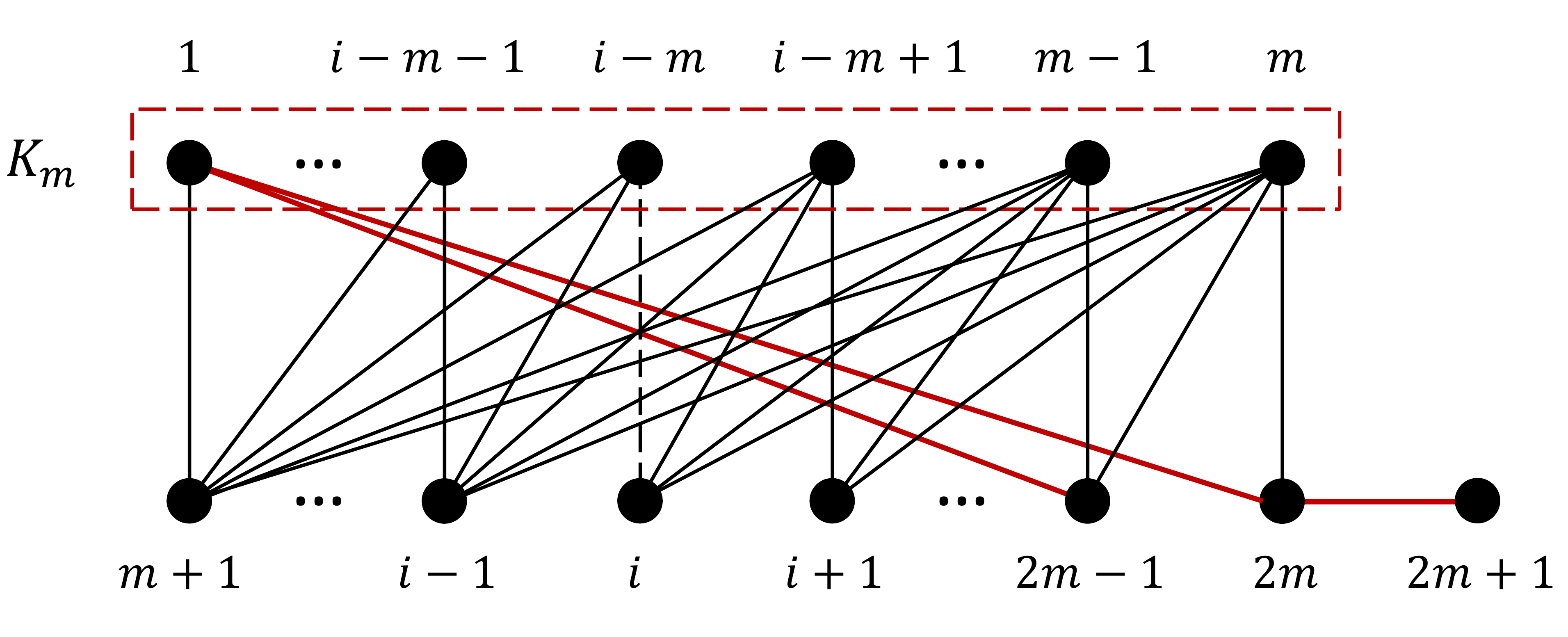}
		\caption{Graph $G_3 = \mathcal{T}_{2m+1} \setminus \{(i-m, i)\}$.}
		\label{fig2}
	\end{figure}
	
\noindent By \eqref{eq1}, we obtain
	\begin{align*}
		X_{i} - X_{i+1} &= |\mathcal{A}| - |\mathcal{B}|, \\
		\mathcal{A} &= \{F \in \mathcal{P}_3(\mathcal{T}_{2m+1}, i, 1) \mid (i-m, i) \in E(F) \}, \\
		\mathcal{B} &= \{H \in \mathcal{P}_3(\mathcal{T}_{2m+1}, i+1, 1) \mid (i-m, i) \in E(H)\}.
	\end{align*}	
	It is clear that $\mathcal{B} = \emptyset$, as no path isomorphic to $P_3$ in $\mathcal{T}_{2m+1}$ originating at $i+1$ can include the edge $(i-m, i)$. On the other hand, the existence of the path $(i, i-m, m)$ ensures that $\mathcal{A}$ is non-empty. These observations show that $|\mathcal{A}| > |\mathcal{B}|$, and thus $X_{i} > X_{i+1}$.
\end{proof}

\begin{lemma} \label{lemma5}	
	$X_{1} > X_{m+1}$.
\end{lemma}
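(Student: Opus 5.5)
The plan is to apply the comparison method of Section~2 once more: delete a few edges so that $1$ and $m+1$ become twins, then count the $P_3$-paths that use the deleted edges.

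First I read off the neighborhoods in $\mathcal{T}_{2m+1}$. Vertex $1$ is adjacent to all of $V_1\setminus\{1\}$. Among the vertices of $V_2$ it is adjacent only to those $j$ with $j-1\le m$, that is, only to $m+1$. It also has the two extra edges to $2m-1$ and $2m$. Hence $N(1)=\{2,\dots,m\}\cup\{m+1,2m-1,2m\}$. Vertex $m+1$ satisfies $j-i\le m$ for every $i\in V_1$, so $N(m+1)=\{1,2,\dots,m\}$.

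Let $\mathcal{E}_4=\{(1,2m-1),(1,2m)\}$ and $G_4=\mathcal{T}_{2m+1}\setminus\mathcal{E}_4$. Then
\[
N_{G_4}[1]=N_{G_4}[m+1]=\{1,2,\dots,m+1\},
\]
so $1$ and $m+1$ are symmetric in $G_4$. By \eqref{eq1}, $X_1-X_{m+1}=|\mathcal{A}|-|\mathcal{B}|$, where $\mathcal{A}$ (respectively $\mathcal{B}$) consists of the paths isomorphic to $P_3$ that start at $1$ (respectively at $m+1$) and contain an edge of $\mathcal{E}_4$.

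Next I list both sets explicitly. A path in $\mathcal{A}$ starts at $1$, so its first edge must lie in $\mathcal{E}_4$ (its second edge does not contain $1$). Thus it has the form $(1,2m-1,v)$ or $(1,2m,v)$. We have $N(2m-1)=\{1,m-1,m\}$, since $j-i\le m$ forces $i\ge m-1$. We have $N(2m)=\{1,m,2m+1\}$. This gives
\[
\mathcal{A}=\{(1,2m-1,m-1),(1,2m-1,m),(1,2m,m),(1,2m,2m+1)\},
\]
so $|\mathcal{A}|=4$. A path in $\mathcal{B}$ starts at $m+1$, and its second edge must be in $\mathcal{E}_4$, so its middle vertex is $1$. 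This gives
\[
\mathcal{B}=\{(m+1,1,2m-1),(m+1,1,2m)\},
\]
so $|\mathcal{B}|=2$. Therefore $X_1-X_{m+1}=2>0$.

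The only delicate step is verifying the neighborhoods of $2m-1$ and $2m$ and making sure no path is missed. Both are small finite checks, so I do not expect a real obstacle.
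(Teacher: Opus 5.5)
Your proof is correct and follows the paper's argument exactly. It uses the same deleted edge set $\mathcal{E}_4=\{(1,2m-1),(1,2m)\}$, the same common closed neighborhood $\{1,2,\dots,m+1\}$ in $G_4$, and the same explicit sets $\mathcal{A}$ (four paths) and $\mathcal{B}$ (two paths). Your check of $N(2m-1)=\{1,m-1,m\}$, $N(2m)=\{1,m,2m+1\}$, and of where the deleted edge must sit in each path supplies the ``direct verification'' that the paper leaves to the reader.
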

\begin{proof}
	Let $\mathcal{E}_4 = \{(1, 2m-1), (1, 2m)\}$ and $G_4 = \mathcal{T}_{2m+1} \setminus \mathcal{E}_4$. The vertices $1$ and $m+1$ are symmetric in $G_4$, as they share the same closed neighborhood: 
	\[
	N_{G_4}[1] = N_{G_4}[m+1] = \{1, 2, \dots, m+1\}.
	\]
	Consequently, equation~\eqref{eq1} yields
	\begin{align*}
		X_{1} - X_{m+1} &= |\mathcal{A}| - |\mathcal{B}|, \\
		\mathcal{A} &= \{F \in \mathcal{P}_3(\mathcal{T}_{2m+1}, 1, 1) \mid E(F) \cap \mathcal{E}_4 \neq \emptyset \}, \\
		\mathcal{B} &= \{H \in \mathcal{P}_3(\mathcal{T}_{2m+1}, m+1, 1) \mid E(H) \cap \mathcal{E}_4 \neq \emptyset \}.
	\end{align*}
	Direct verification confirms that $|\mathcal{A}| = 4$, with the explicit elements being
	\[
	\mathcal{A} = \{(1, 2m-1, m-1), (1, 2m-1, m), (1, 2m, m), (1, 2m, 2m+1)\}.
	\]
	Meanwhile, we find that $|\mathcal{B}| = 2$, namely
	\[
	\mathcal{B} = \{(m+1, 1, 2m-1), (m+1, 1, 2m)\}.
	\]	
	Since $|\mathcal{A}| > |\mathcal{B}|$, the desired inequality $X_{1} > X_{m+1}$ follows immediately.
\end{proof}
	
\begin{lemma} \label{lemma6}	
	$X_{2} > X_{1}$.
\end{lemma}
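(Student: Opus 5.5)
We apply the methodology of Section~2 with a closed-neighborhood symmetry, as in Lemma~\ref{lemma5}. Reading off the definition of $\mathcal{T}_{2m+1}$: a vertex $j\in V_2$ is adjacent to $i\in V_1$ exactly when $i \ge j-m$. Hence
\[
N(1)=\{2,\dots,m\}\cup\{m+1,2m-1,2m\}, \qquad N(2)=\{1,3,\dots,m\}\cup\{m+1,m+2\}.
\]
The closed neighborhoods of $1$ and $2$ therefore differ only in $\{2m-1,2m\}$ versus $\{m+2\}$. We set
\[
\mathcal{E}_5=\{(2,m+2),(1,2m-1),(1,2m)\}, \qquad G_5=\mathcal{T}_{2m+1}\setminus\mathcal{E}_5 .
\]
Then $N_{G_5}[1]=N_{G_5}[2]=\{1,2,\dots,m+1\}$, so $1$ and $2$ are symmetric in $G_5$. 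By \eqref{eq1}, $X_2-X_1=|\mathcal{A}|-|\mathcal{B}|$, where $\mathcal{A}$ (resp.\ $\mathcal{B}$) is the set of $P_3$'s starting at $2$ (resp.\ $1$) that use an edge of $\mathcal{E}_5$.

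We enumerate these two sets using the neighborhoods we need:
\[
N(m+2)=\{2,\dots,m\}, \qquad N(2m-1)=\{1,m-1,m\}, \qquad N(2m)=\{1,m,2m+1\}.
\]
A path starting at $2$ can use an edge of $\mathcal{E}_5$ in two ways. If the first edge is $(2,m+2)$, the paths are $(2,m+2,u)$ with $u\in\{3,\dots,m\}$, giving $m-2$ paths. If the second edge is in $\mathcal{E}_5$, the middle vertex must be $1$, giving $(2,1,2m-1)$ and $(2,1,2m)$. Thus $|\mathcal{A}|=m$.

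For paths starting at $1$, the first edge can be $(1,2m-1)$ or $(1,2m)$. This gives $(1,2m-1,m-1)$, $(1,2m-1,m)$, $(1,2m,m)$ and $(1,2m,2m+1)$. The only other possibility is a second edge $(2,m+2)$, giving $(1,2,m+2)$. Thus $|\mathcal{B}|=5$.

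Hence $X_2-X_1=m-5>0$, since $m\ge 6$. The only delicate point is the careful enumeration: the paths whose middle vertex is the other vertex of the pair ($1$ or $2$) must not be missed, and the neighborhoods of $m+2$, $2m-1$ and $2m$ must be read off correctly from the adjacency rule.
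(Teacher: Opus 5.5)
Your proof is correct and follows the paper's argument: you use the same edge set $\mathcal{E}_5$, the same closed-neighborhood symmetry of $1$ and $2$ in $G_5$, and the same five-element set $\mathcal{B}$. The only difference is that you count $\mathcal{A}$ exactly ($|\mathcal{A}|=m$, so $X_2-X_1=m-5$), while the paper just exhibits six paths in $\mathcal{A}$, which already suffices.
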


\begin{proof}
	Let $\mathcal{E}_5 = \{(1, 2m-1), (1, 2m), (2, m+2)\}$ and $G_5 = \mathcal{T}_{2m+1} \setminus \mathcal{E}_5$. The vertices $1$ and $2$ are symmetric in $G_5$, since they have identical closed neighborhoods:
	\[
	N_{G_5}[1] = N_{G_5}[2] = \{1, 2, \dots, m+1\}.
	\]
	By \eqref{eq1}, the difference $X_{2} - X_{1}$ can be expressed as
	\begin{align*}
		X_{2} - X_{1} &= |\mathcal{A}| - |\mathcal{B}|, \\
		\mathcal{A} &= \{F \in \mathcal{P}_3(\mathcal{T}_{2m+1}, 2, 1) \mid E(F) \cap \mathcal{E}_5 \neq \emptyset \}, \\
		\mathcal{B} &= \{H \in \mathcal{P}_3(\mathcal{T}_{2m+1}, 1, 1) \mid E(H) \cap \mathcal{E}_5 \neq \emptyset \}.
	\end{align*}
	As $m \geq 6$, the set $\mathcal{A}$ contains at least six paths, namely
	\[
	(2, 1, 2m-1), (2, 1, 2m), (2, m+2, 3), (2, m+2, 4), (2, m+2, 5), \text{ and } (2, m+2, 6).
	\]
	In turn, the set $\mathcal{B}$ consists of exactly five elements:
	\[
	\mathcal{B} = \{(1, 2, m+2), (1, 2m-1, m-1), (1, 2m-1, m), (1, 2m, m), (1, 2m, 2m+1)\}.
	\]
	Since $|\mathcal{A}| \geq 6 > |\mathcal{B}|$, we obtain $X_{2} > X_1$.	
\end{proof}

\begin{lemma} \label{lemma7}
	For each $i \in \{2, 3, \dots, m-1\}$, the following inequality holds: 
	\[
	X_{i+1} > X_{i}.
	\]
\end{lemma}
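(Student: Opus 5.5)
The plan is to use the edge-deletion method of Section~2 with a single deleted edge, exactly as in Lemma~\ref{lemma4}, but now with closed neighborhoods as in Lemmas~\ref{lemma5} and~\ref{lemma6}.

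Fix $i \in \{2, 3, \dots, m-1\}$. For every $t \in \{2, \dots, m\}$, vertex $t$ is adjacent to all other vertices of $V_1$ and to exactly the vertices $m+1, \dots, m+t$ of $V_2$. The special edges $(1,2m-1)$, $(1,2m)$ and $(2m,2m+1)$ do not involve $t$. Hence
\[
N[t] = V_1 \cup \{m+1, \dots, m+t\}.
\]
So $N[i+1]$ differs from $N[i]$ only by the vertex $m+i+1 \le 2m$. Set $\mathcal{E}_6 = \{(i+1, m+i+1)\}$ and $G_6 = \mathcal{T}_{2m+1} \setminus \mathcal{E}_6$. Then
\[
N_{G_6}[i] = N_{G_6}[i+1] = V_1 \cup \{m+1, \dots, m+i\},
\]
so the transposition of $i$ and $i+1$ is an automorphism of $G_6$. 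By \eqref{eq1},
\[
X_{i+1} - X_i = |\mathcal{A}| - |\mathcal{B}|,
\]
where $\mathcal{A}$ and $\mathcal{B}$ are the paths of $\mathcal{P}_3$ rooted at an end-vertex $i+1$, respectively $i$, that contain the edge $(i+1, m+i+1)$.

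Next I would compute both sets explicitly. A $P_3$ starting at $i+1$ can contain this edge only as its first edge, since the second edge of such a path avoids $i+1$. Thus
\[
\mathcal{A} = \{(i+1, m+i+1, w) \mid w \in N(m+i+1) \setminus \{i+1\}\},
\]
and $|\mathcal{A}| = \deg(m+i+1) - 1$. Vertex $i$ is not adjacent to $m+i+1$, so a path starting at $i$ can use the edge only as its second edge, traversed from $i+1$. Hence $\mathcal{B} = \{(i, i+1, m+i+1)\}$ and $|\mathcal{B}| = 1$.

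The remaining step, and the only place where care is needed, is to verify $\deg(m+i+1) \ge 3$ at the boundary values of $i$. For $i \le m-3$ we have $N(m+i+1) \supseteq \{i+1, \dots, m\}$, which has $m-i \ge 3$ elements. For $i = m-2$, vertex $2m-1$ has neighbors $m-1$, $m$ and $1$. For $i = m-1$, vertex $2m$ has neighbors $m$, $1$ and $2m+1$. In every case $|\mathcal{A}| \ge 2 > 1 = |\mathcal{B}|$, which gives $X_{i+1} > X_i$.
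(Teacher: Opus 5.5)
Your proof is correct and follows the paper's argument. Like the paper, you delete the edge $(i+1, m+i+1)$, use the coinciding closed neighborhoods $V_1 \cup \{m+1, \dots, m+i\}$, and observe that $\mathcal{B} = \{(i, i+1, m+i+1)\}$; you then bound $|\mathcal{A}| \ge 2$ using the same three cases $i \le m-3$, $i = m-2$, $i = m-1$. Writing $|\mathcal{A}| = \deg(m+i+1) - 1$ is only a slightly tidier packaging of the paper's explicit lists of paths.
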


\begin{proof}
	Fix $i \in \{2, 3, \dots, m-1\}$ and let $G_6 = \mathcal{T}_{2m+1} \setminus \{(i+1, m+i+1)\}$ (see Fig.~3). The symmetry of $i$ and $i+1$ in $G_6$ is evident from their common closed neighborhood $\{1, 2, \dots, m+i\}$. 
	
	\begin{figure}[ht]
		\centering
		\includegraphics[width=13.5cm]{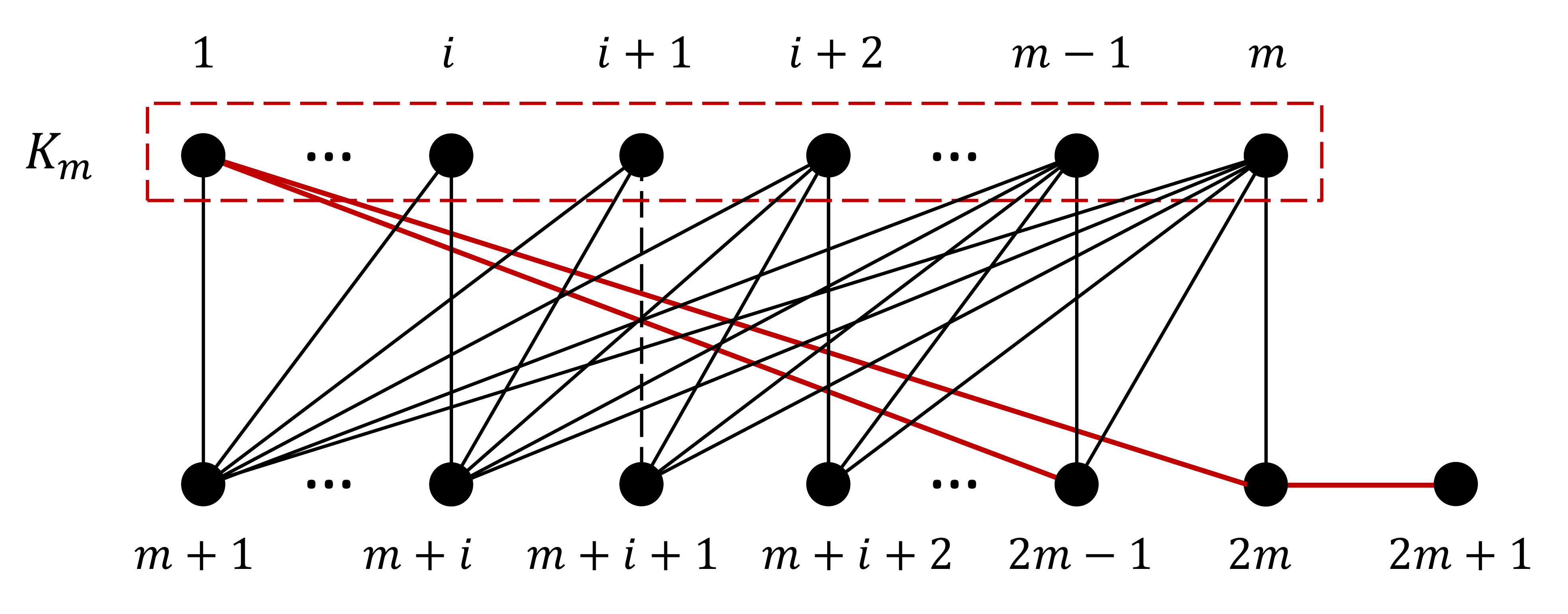}
		\caption{Graph $G_6 = \mathcal{T}_{2m+1} \setminus \{(i+1, m+i+1)\}$.}
		\label{fig3}
	\end{figure}
	
\noindent Under this symmetry, \eqref{eq1} implies that $X_{i+1} - X_i = |\mathcal{A}| - |\mathcal{B}|$, where
	\begin{align*}
		\mathcal{A} &= \{F \in \mathcal{P}_3(\mathcal{T}_{2m+1}, i+1, 1) \mid (i+1, m+i+1) \in E(F) \}, \\
		\mathcal{B} &= \{H \in \mathcal{P}_3(\mathcal{T}_{2m+1}, i, 1) \mid (i+1, m+i+1) \in E(H)\}.
	\end{align*}
It is easy to check that $\mathcal{B}$ consists only of the path $(i, i+1, m+i+1)$, hence $|\mathcal{B}| = 1$. In contrast, $|\mathcal{A}| \geq 2$ due to the presence of the following paths in $\mathcal{A}$:
	\begin{itemize}
		\item $(i+1, m+i+1, m-1)$ and $(i+1, m+i+1, m)$ for $2 \leq i \leq m-3$,
		\item $(m-1, 2m-1, 1)$ and $(m-1, 2m-1, m)$ for $i = m-2$,
		\item $(m, 2m, 1)$ and $(m, 2m, 2m+1)$ for $i = m-1$.
	\end{itemize}
	The resulting inequality $|\mathcal{A}| > |\mathcal{B}|$ confirms that $X_{i+1} > X_i$.		
\end{proof}

To provide the necessary foundation for the main result of Section~3, we now turn to ordinary $P_3$-degrees and establish the following chain of inequalities.

\begin{lemma} \label{lemma8}	 
	The $P_3$-degrees of vertices $1$, $2$, and $3$ in $\mathcal{T}_{2m+1}$ satisfy
	\begin{equation*}
		P_3\text{-}\deg_{\mathcal{T}_{2m+1}}(3) > P_3\text{-}\deg_{\mathcal{T}_{2m+1}}(1) > P_3\text{-}\deg_{\mathcal{T}_{2m+1}}(2).
	\end{equation*}
\end{lemma}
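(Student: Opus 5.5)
The plan is to split the $P_3$-degree of a vertex according to the role the vertex plays in each copy of $P_3$. A subgraph isomorphic to $P_3$ containing $v$ has $v$ either as its central vertex or as an end-vertex, and never both. The subgraphs in which $v$ is an end-vertex are exactly those counted by $X_v=(P_3)_1\text{-}\deg(v)$; each is counted once, since $(P_3)_1$- and $(P_3)_3$-degrees count the same family. Combined with~\eqref{eq2} this gives
\[
P_3\text{-}\deg_{\mathcal{T}_{2m+1}}(v)=\binom{\deg(v)}{2}+X_v .
\]
The comparison then reduces to the degrees of $1,2,3$ and the differences of the $X$-values, which Lemmas~\ref{lemma6} and~\ref{lemma7} already control.

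Next I compute the degrees directly from the definition.
\begin{itemize}
\item Vertex $1$ has $m-1$ neighbours in $V_1$. In $V_2$ it is adjacent only to $m+1$, and the special edges add $2m-1$ and $2m$. Hence $\deg(1)=m+2$.
\item Vertex $2$ has $m-1$ neighbours in $V_1$ and the two neighbours $m+1,m+2$ in $V_2$. Hence $\deg(2)=m+1$.
\item Vertex $3$ has $m-1$ neighbours in $V_1$ and the three neighbours $m+1,m+2,m+3$ in $V_2$. Hence $\deg(3)=m+2$.
\end{itemize}
The first inequality is then immediate. Since $\deg(3)=\deg(1)$, it is equivalent to $X_3>X_1$, which follows from Lemma~\ref{lemma7} (with $i=2$) combined with Lemma~\ref{lemma6}.

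The second inequality is the main step, since $X_2>X_1$ works against it. We have
\[
P_3\text{-}\deg(1)-P_3\text{-}\deg(2)=\binom{m+2}{2}-\binom{m+1}{2}-(X_2-X_1)=(m+1)-(X_2-X_1),
\]
so we need the exact value of $X_2-X_1$, not just its sign. I will reuse the decomposition from the proof of Lemma~\ref{lemma6}, with the same edge set $\mathcal{E}_5$, where $|\mathcal{B}|=5$, and compute $|\mathcal{A}|$ exactly.
\begin{itemize}
\item The paths in $\mathcal{A}$ start at $2$ and contain an edge of $\mathcal{E}_5$. The edge $(2,m+2)$ can only occur as the first edge, giving the paths $(2,m+2,u)$ with $u\in N(m+2)\setminus\{2\}=\{3,\dots,m\}$; there are $m-2$ of these.
\item The edges $(1,2m-1)$ and $(1,2m)$ can only occur as the second edge after $(2,1)$, giving two more paths.
\end{itemize}
Thus $|\mathcal{A}|=m$, and by~\eqref{eq1} we get $X_2-X_1=m-5$. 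The difference above is therefore $(m+1)-(m-5)=6>0$, which proves the second inequality.

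The point to double-check carefully is this exact enumeration of $\mathcal{A}$, in particular that $m+2$ is not adjacent to $1$ and that no other path from $2$ meets $\mathcal{E}_5$.
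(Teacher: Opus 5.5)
Your proof is correct, and the count you flagged checks out. For $m\ge 6$ we have $N(m+2)=\{2,\dots,m\}$ and $2\notin N(2m-1)\cup N(2m)$, so $|\mathcal{A}|=m$ exactly and $X_2-X_1=m-5$.

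Your route differs from the paper's. The paper applies the edge-removal symmetry argument directly to unrooted $P_3$-degrees. With $\mathcal{E}_5$, it enumerates all copies of $P_3$ through $1$ and through $2$ that meet $\mathcal{E}_5$ ($2m+6$ versus $2m$). For the first inequality it introduces a new set $\mathcal{E}_7=\{(1,2m-1),(1,2m),(3,m+2),(3,m+3)\}$ and enumerates again ($4m-2$ versus $2m+7$).

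You instead split each $P_3$-degree as $\binom{\deg(v)}{2}+X_v$, an identity the paper uses only afterwards, in the proof of Theorem~\ref{thm3}. The first inequality then follows immediately from $\deg(1)=\deg(3)$ together with Lemmas~\ref{lemma6} and~\ref{lemma7}. The second needs only a sharpening of Lemma~\ref{lemma6} from $|\mathcal{A}|\ge 6$ to the exact value $|\mathcal{A}|=m$.

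Both routes give the same exact margins: $6$, and $2m-9$ (the latter because $X_3-X_2=m-4$ in Lemma~\ref{lemma7} with $i=2$). Your version is shorter and reuses results already proved. It also makes clear why the second inequality holds despite $X_2>X_1$: the extra neighbor of vertex $1$ gives it $m+1$ more paths centered at it, which outweighs its deficit of $m-5$ end-rooted paths. The paper's version is self-contained and does not depend on exact rooted-degree differences, at the cost of enumerating larger families.
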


\begin{proof}
	As shown in the proof of Lemma~\ref{lemma6}, vertices $1$ and $2$ are symmetric in the subgraph $G_5 = \mathcal{T}_{2m+1} \setminus \mathcal{E}_5$, where 
	$\mathcal{E}_5 = \{(1, 2m-1), (1, 2m), (2, m+2)\}$.  
	Thus, they have the same $P_3$-degrees in $G_5$, which leads to the following relation for the original graph $\mathcal{T}_{2m+1}$:
	\[
	P_3\text{-}\deg_{\mathcal{T}_{2m+1}}(1) - P_3\text{-}\deg_{\mathcal{T}_{2m+1}}(2) = |\mathcal{C}| - |\mathcal{D}|,
	\]
	where $\mathcal{C}$ and $\mathcal{D}$ denote the sets of subgraphs in $\mathcal{T}_{2m+1}$ isomorphic to $P_3$ that contain at least one edge from $\mathcal{E}_5$ and pass through vertices $1$ and $2$, respectively.
	Clearly,  
	\begin{align*}
		\mathcal{C} &= \{(2m-1, 1, u) \mid u \in \{2, 3,\dots, m+1\}\cup \{2m\}\} \\
		&\quad \cup \{(2m, 1, u) \mid u \in \{2, 3,\dots, m+1\}\} \cup \{(1, 2, m+2)\} \\
		&\quad \cup \{(1, 2m-1, m-1), (1, 2m-1, m), (1, 2m, m), (1, 2m, 2m+1)\},
	\end{align*}
	whence $|\mathcal{C}| = 2m + 6$. In a similar manner, one finds 
	\begin{align*}
		\mathcal{D} &= \{(m+2, 2, u) \mid u \in \{3, 4,\dots, m+1\}\cup \{1\}\} \\
		&\quad \cup \{(2, m+2, u) \mid u \in \{3, 4,\dots, m\} \} \cup \{(2, 1, 2m-1), (2, 1, 2m)\},
	\end{align*}
	with $|\mathcal{D}| = 2m$. Since $|\mathcal{C}| > |\mathcal{D}|$, we obtain
	\[
	P_3\text{-}\deg_{\mathcal{T}_{2m+1}}(1) > P_3\text{-}\deg_{\mathcal{T}_{2m+1}}(2).
	\]
	
Similarly, the vertices $1$ and $3$ are symmetric in the subgraph $G_7 = \mathcal{T}_{2m+1} \setminus \mathcal{E}_7$, where 
$
\mathcal{E}_7 = \{(1, 2m-1), (1, 2m), (3, m+2), (3, m+3)\}.
$
Hence, 
\[
P_3\text{-}\deg_{\mathcal{T}_{2m+1}}(3) - P_3\text{-}\deg_{\mathcal{T}_{2m+1}}(1) = |\mathcal{X}| - |\mathcal{Y}|,
\]
where $\mathcal{X}$ and $\mathcal{Y}$ denote the sets of subgraphs of $\mathcal{T}_{2m+1}$ isomorphic to $P_3$ that contain at least one edge from $\mathcal{E}_7$ and pass through vertices 3 and 1, respectively. Note that $|\mathcal{Y}| = 2m + 7$, since $\mathcal{Y}$ is explicitly given by
\[
\mathcal{Y} = \{(1, 3, m+2), (1, 3, m+3)\} \cup (\mathcal{C} \setminus \{(1, 2, m+2)\}).
\]
An analysis of the set $\mathcal{X}$ yields the following explicit description:
\begin{align*}
	\mathcal{X} &= \{(m+2, 3, u) \mid u \in \{1, 2, \dots, m+3\} \setminus \{3, m+2\}\} \\
	&\quad \cup \{(3, m+2, u) \mid u \in \{2, 3, \dots, m\} \setminus \{3\} \} \\
	&\quad \cup \{(m+3, 3, u) \mid u \in \{1, 2, \dots, m+1\} \setminus \{3\}\} \\
	&\quad \cup \{(3, m+3, u) \mid u \in \{4, 5,\dots, m\}\} \cup \{(3, 1, 2m-1), (3, 1, 2m)\},
\end{align*}
whence $|\mathcal{X}| = 4m - 2$. Finally, since $m \ge 6$, the inequality $|\mathcal{X}| = 4m - 2 > 2m + 7 = |\mathcal{Y}|$ confirms that
\[
P_3\text{-}\deg_{\mathcal{T}_{2m+1}}(3) > P_3\text{-}\deg_{\mathcal{T}_{2m+1}}(1). \qedhere
\]
\end{proof}

\subsubsection{Proof of Theorem~\ref{thm2}}
Theorem~\ref{thm2} follows directly from Theorem~\ref{thm3} below. 

\begin{theorem} \label{thm3}
	For every integer $m \geq 6$, the graph $\mathcal{T}_{2m+1}$ is simultaneously $(P_3)_1$-irregular and $P_3$-irregular.
\end{theorem}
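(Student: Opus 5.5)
The plan is to obtain both irregularity properties by arranging all $2m+1$ vertices into a single strictly monotone chain, using Lemmas~\ref{lemma1}--\ref{lemma8} together with an elementary degree count.

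\textbf{$(P_3)_1$-irregularity.} The lemmas already chain together into one total order:
\begin{itemize}
\item Lemmas~\ref{lemma1}, \ref{lemma2}, \ref{lemma3} give $X_{2m+1}<X_{2m}<X_{2m-1}<X_{2m-2}$.
\item Lemma~\ref{lemma4}, applied for $i=m+1,\dots,2m-3$, gives $X_{2m-2}<X_{2m-3}<\dots<X_{m+1}$.
\item Lemma~\ref{lemma5} gives $X_{m+1}<X_1$.
\item Lemma~\ref{lemma6} gives $X_1<X_2$.
\item Lemma~\ref{lemma7} gives $X_2<X_3<\dots<X_m$.
\end{itemize}
Hence
\[
X_{2m+1}<X_{2m}<X_{2m-1}<\dots<X_{m+1}<X_1<X_2<\dots<X_m .
\]
Every vertex appears exactly once in this chain, so the $(P_3)_1$-degrees are pairwise distinct.

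\textbf{$P_3$-irregularity.} I will use the decomposition
\[
P_3\text{-}\deg(v)=\binom{\deg(v)}{2}+X_v .
\]
This holds because every copy of $P_3$ through $v$ has $v$ either as its centre or as an end. The centre copies are counted by \eqref{eq2}. Each copy with $v$ as an end corresponds to exactly one copy in which $v$ plays the root $1$.

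Next I compute the ordinary degrees from the definition.
\begin{itemize}
\item For $m+1\le j\le 2m-2$, vertex $j$ has neighbourhood $\{j-m,\dots,m\}$, so $\deg(j)=2m-j+1$. This decreases from $m$ down to $3$.
\item $\deg(2m-1)=\deg(2m)=3$ and $\deg(2m+1)=1$.
\item For $2\le i\le m$, vertex $i\in V_1$ has degree $m-1+i$.
\item Vertex $1$ has degree $m+2$, coming from the $m-1$ clique neighbours together with $m+1$, $2m-1$ and $2m$.
\end{itemize}

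So the ordinary degree is non-increasing along $2m+1,2m,\dots,m+1$ read backwards. Combined with the strictly monotone $X$-chain, this gives
\[
P_3\text{-}\deg(2m+1)<P_3\text{-}\deg(2m)<\dots<P_3\text{-}\deg(m+1).
\]
Next, $\deg(2)=m+1>m=\deg(m+1)$ and $X_2>X_{m+1}$, which gives $P_3\text{-}\deg(m+1)<P_3\text{-}\deg(2)$.

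The only place where degree and $X$ disagree is the triple $1,2,3$. There $\deg(1)=m+2>\deg(2)=m+1$ while $X_1<X_2$, so the decomposition alone is inconclusive. This is the main obstacle, and it is exactly what Lemma~\ref{lemma8} resolves: $P_3\text{-}\deg(2)<P_3\text{-}\deg(1)<P_3\text{-}\deg(3)$.

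Finally, for $3\le i<m$ both $\deg(i)$ and $X_i$ strictly increase, so $P_3\text{-}\deg(3)<\dots<P_3\text{-}\deg(m)$.

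Altogether the $P_3$-degrees strictly increase along the order
\[
2m+1,\ 2m,\ \dots,\ m+1,\ 2,\ 1,\ 3,\ 4,\ \dots,\ m .
\]
This order contains every vertex exactly once, which proves $P_3$-irregularity.
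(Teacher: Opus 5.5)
Your proposal is correct and follows the paper's proof essentially step for step: Lemmas~\ref{lemma1}--\ref{lemma7} are chained into the single $(P_3)_1$-ordering, and the identity $P_3\text{-}\deg(v)=\binom{\deg(v)}{2}+X_v$ combined with the degree ordering then handles every vertex except $1$, which Lemma~\ref{lemma8} places between $2$ and $3$. Your explicit degree computations (e.g.\ $\deg(1)=\deg(3)=m+2$, $\deg(2)=m+1$, $\deg(j)=2m-j+1$ for $m+1\le j\le 2m-2$) are accurate and just make explicit what the paper reads off from the figure.
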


\begin{proof}
	Fix an arbitrary integer $m \geq 6$. According to Lemmas~\ref{lemma1}--\ref{lemma7}, all vertices of $\mathcal{T}_{2m+1}$ have distinct $(P_3)_1$-degrees, which satisfy the following chain of inequalities:
	\begin{equation} \label{eq3}
		X_{m} > X_{m-1} > \dots > X_{2} > X_{1} > X_{m+1} > X_{m+2} > \dots > X_{2m} > X_{2m+1}.
	\end{equation}
	Thus, $\mathcal{T}_{2m+1}$ is $(P_3)_1$-irregular. 
	
	Next, we establish the ordering of the degrees, the $(P_3)_2$-degrees, and finally the $P_3$\nobreakdash-degrees for all vertices in $\mathcal{T}_{2m+1}$ except for vertex 1. For brevity, the subscript $\mathcal{T}_{2m+1}$ will be omitted from the notation for all types of degrees throughout the remainder of this proof. From the structure of $\mathcal{T}_{2m+1}$ (see Fig.~1), it is straightforward to observe that	
	\begin{equation} \label{eq4}
		\begin{split}
			&\deg(m) > \deg(m-1) > \dots > \deg(3) > \deg(2), \\
			&\deg(2) > \deg(m+1), \\
			&\deg(m+1) \geq \deg(m+2) \geq \dots \geq \deg(2m+1) = 1.
		\end{split}
	\end{equation}
In view of relations \eqref{eq2} and \eqref{eq4}, the $(P_3)_2$-degrees are ranked as follows:
	\begin{equation} \label{eq5}
		\begin{split}
			&(P_3)_2\text{-}\deg(m) > (P_3)_2\text{-}\deg(m-1) > \dots > (P_3)_2\text{-}\deg(3) > (P_3)_2\text{-}\deg(2), \\
			&(P_3)_2\text{-}\deg(2) > (P_3)_2\text{-}\deg(m+1), \\
			&(P_3)_2\text{-}\deg(m+1) \geq (P_3)_2\text{-}\deg(m+2) \geq \dots \geq (P_3)_2\text{-}\deg(2m+1).
		\end{split}
	\end{equation}
	By combining \eqref{eq3}, \eqref{eq5}, and the fundamental relation
	\[
	P_3\text{-}\deg(v) = X_v + (P_3)_2\text{-}\deg(v), \quad v \in V(\mathcal{T}_{2m+1}),
	\]
	we obtain the ordering of the $P_3$-degrees:
	\begin{equation} \label{eq6}
		\begin{split}
			&P_3\text{-}\deg(m) > P_3\text{-}\deg(m-1) > \dots > P_3\text{-}\deg(3) > P_3\text{-}\deg(2), \\
			&P_3\text{-}\deg(2) > P_3\text{-}\deg(m+1), \\
			&P_3\text{-}\deg(m+1) > P_3\text{-}\deg(m+2) > \dots > P_3\text{-}\deg(2m+1).
		\end{split}
	\end{equation}
Finally, by Lemma~\ref{lemma8}, the $P_3$-degree of vertex 1 in $\mathcal{T}_{2m+1}$ lies strictly between the $P_3$\nobreakdash-degrees of vertices $2$ and $3$.
Combined with \eqref{eq6}, this implies that the $P_3$-degrees of all vertices in $\mathcal{T}_{2m+1}$ are pairwise distinct. Hence, $\mathcal{T}_{2m+1}$ is also $P_3$-irregular.
	\end{proof}

\section{On multiple irregularities with respect to paths and rooted paths of various orders $n \geq 4$}

This section presents the proof of Theorem~\ref{thm1}. Fix an integer $k \geq 4$. We consider an arbitrary $n \in \{4, 5, \dots, k\}$ and any choice of the root~$r$ for the path $P_n$. Subsection~4.1 introduces the graph $\mathcal{W}_{2m}$ of order $2m$ and establishes a series of lemmas comparing the $(P_n)_r$-degrees of its vertices under certain constraints on the parameter $m$. 
Subsection~4.2 formulates the $k$-condition on $m$, which ensures that all these constraints are simultaneously satisfied. 
Finally, Subsection~4.3 completes the proof of Theorem~\ref{thm1}.

\subsection{Graph $\mathcal{W}_{2m}$}
For the construction below, let $m$ be an arbitrary but fixed integer such that $m \ge 6$ and $m > k$; in particular, $m > n$.
\begin{definition} 
	The graph $\mathcal{W}_{2m}$ (see Fig.~4) is obtained from $\mathcal{T}_{2m+1}$ (see Fig.~1) as follows:
	\[
	V(\mathcal{W}_{2m}) = V(\mathcal{T}_{2m+1}) \setminus \{2m+1\}, \quad E(\mathcal{W}_{2m}) = E(\mathcal{T}_{2m+1}) \setminus \{(1, 2m), (2m, 2m+1)\}.
	\]
\end{definition}

\begin{figure}[ht]
	\centering
	\includegraphics[width=12cm]{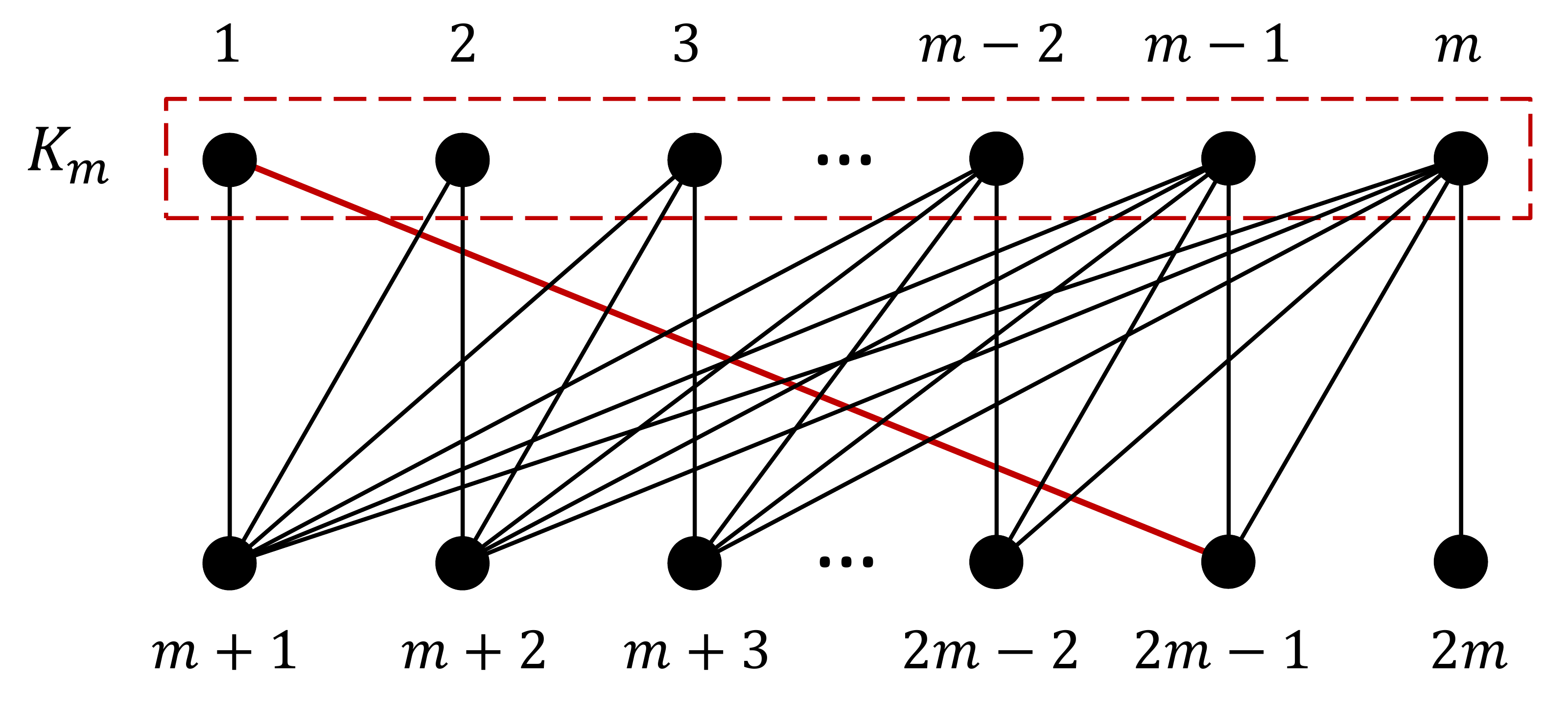}
	\caption{Graph $\mathcal{W}_{2m}$.}
	\label{fig4}
\end{figure}

For each vertex $v \in V(\mathcal{W}_{2m})$, let $Y_v$ denote its $(P_n)_r$-degree in the graph $\mathcal{W}_{2m}$.

The subsequent proposition will be instrumental in our analysis.

\begin{proposition} \label{p2}
	Let $1 \leq s \leq n$. In the graph $\mathcal{W}_{2m}$, for any subsets $U_1, U_2, \dots, U_s$ of $V(\mathcal{W}_{2m})$, the number of copies of $P_n$ containing a set of vertices $\{v_1, v_2, \dots, v_s\}$ such that $v_i \in U_i$ for each $i \in \{1, 2,\dots, s\}$ is at most
	\begin{equation} \label{eq7}    
		\prod_{i=1}^s |U_i| \cdot n! \binom{2m-s}{n-s}.
	\end{equation}
\end{proposition}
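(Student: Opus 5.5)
The plan is a direct counting argument via an injection into labelled data. Every copy $H$ of $P_n$ that contains vertices $v_1 \in U_1, \dots, v_s \in U_s$ (distinct, since they are $s$ distinct vertices of the set $\{v_1,\dots,v_s\}$) will be charged to a triple consisting of the tuple $\langle v_1, \dots, v_s\rangle$, the set $V(H) \setminus \{v_1, \dots, v_s\}$, and an ordering of $V(H)$. I will show that this assignment is injective for fixed $\langle v_1,\dots,v_s\rangle$ once the remaining data are fixed. Then I will count the possible values of each coordinate.

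First, there are at most $\prod_{i=1}^s |U_i|$ choices for the tuple $\langle v_1, \dots, v_s\rangle$ with $v_i \in U_i$. We may discard tuples with repeated entries, which only lowers the count.

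Second, fix such a tuple of $s$ distinct vertices. The vertex set $V(H)$ of a copy containing them is an $n$-subset of $V(\mathcal{W}_{2m})$ containing $\{v_1,\dots,v_s\}$. Since $|V(\mathcal{W}_{2m})| = 2m$, there are exactly $\binom{2m-s}{n-s}$ ways to choose the remaining $n-s$ vertices. Here $m > n$ guarantees $2m - s \ge n - s \ge 0$, so the binomial coefficient is meaningful.

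Third, fix the vertex set $S = V(H)$. A copy of $P_n$ with vertex set $S$ is determined by the order in which its vertices occur, namely $(w_1, \dots, w_n)$ with $w_j$ adjacent to $w_{j+1}$. Its edge set is then $\{(w_j, w_{j+1})\}$. So the number of such subgraphs is at most the number of orderings of $S$, which is $n!$. In fact it is at most $n!/2$, because of the reversal invariance noted in the introduction, but the cruder bound is all we need.

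Multiplying the three counts gives at most $\prod_{i=1}^s |U_i| \cdot \binom{2m-s}{n-s} \cdot n!$ pairs (tuple, copy). Each copy $H$ satisfying the hypothesis arises from at least one tuple, so the number of such copies is at most \eqref{eq7}. No structural property of $\mathcal{W}_{2m}$ is used beyond $|\mathcal{W}_{2m}| = 2m$.

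The only point requiring care is the interpretation of the statement. The vertices $v_1, \dots, v_s$ are distinct, and a copy may be realized by several tuples. This overcounting only helps the upper bound, so I expect no real obstacle. The argument is a straightforward union bound.
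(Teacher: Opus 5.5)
Your proposal is correct and follows the paper's proof: at most $\prod_{i=1}^s |U_i|$ choices of the $v_i$, then $\binom{2m-s}{n-s}$ choices of the remaining $n-s$ vertices, then at most $n!$ ways to arrange $n$ fixed vertices into a path, with any overcounting only helping the upper bound. You also state explicitly that the $v_i$ must be distinct; otherwise fewer than $s$ vertices would be fixed and the binomial factor would not apply. The paper leaves this assumption implicit.
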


\begin{proof}
	To estimate the number of such subgraphs, first consider the choice of the set $\{v_1, v_2,\dots, v_s\}$. Since each $v_i \in U_i$, there are at most $\prod_{i=1}^s |U_i|$ ways to choose these $s$ vertices. The remaining $n-s$ vertices of the path $P_n$ can be selected from the $2m-s$ available vertices of $\mathcal{W}_{2m}$ in $\binom{2m-s}{n-s}$ ways. Finally, as $n$ vertices form the path $P_n$ in at most $n!$ ways, the product of these factors yields \eqref{eq7}.
\end{proof}

We now compare the $(P_n)_r$-degrees of vertices in $\mathcal{W}_{2m}$. For brevity, an $n$-path will denote any subgraph of $\mathcal{W}_{2m}$ isomorphic to $P_n$. We omit the trivial proofs of existence for the paths considered hereafter, as they follow directly from Remark~1.
\smallskip

\noindent \textbf{Remark 1.} The graph $\mathcal{W}_{2m}$ possesses the following structural properties:
\begin{itemize}
	\itemsep=0pt
	\item The vertices of $V_1$ (the upper level, see Fig. 4) form a clique.
	\item The neighborhood of vertex $2m-1$ is $N_{\mathcal{W}_{2m}}(2m-1)=\{1, m-1, m\}$.
	\item For any lower-level vertex $j \neq 2m-1$, an edge $(i, j) \in E(\mathcal{W}_{2m})$ exists if and only if $j-m \leq i \leq m$.
	\item For any $i_1 < i_2$ on the upper level and any vertex $j \notin \{i_2, 2m-1\}$, $(i_1, j) \in E(\mathcal{W}_{2m})$ implies $(i_2, j) \in E(\mathcal{W}_{2m})$.
	\item Similarly, for any $j_1 < j_2$ on the lower level with $j_2 \neq 2m-1$ and any vertex $i$, $(j_2, i) \in E(\mathcal{W}_{2m})$ implies $(j_1, i) \in E(\mathcal{W}_{2m})$.
\end{itemize}

\begin{lemma} \label{lemma9}
	If either $r \neq 1$, or $r = 1$ and the condition 
	\begin{equation} \label{eq8}
		\binom{m-3}{n-2} > 2n! \binom{2m-4}{n-4}
	\end{equation}
	holds, then $Y_{2m-1} > Y_{2m}$.
\end{lemma}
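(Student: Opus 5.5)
The plan is to use the methodology of Section~2 with the edge set $\mathcal{E}_8=\{(1,2m-1),(m-1,2m-1)\}$. By Remark~1, $N_{\mathcal{W}_{2m}}(2m-1)=\{1,m-1,m\}$. The vertex $2m$ lost its edge to $1$ when $\mathcal{W}_{2m}$ was formed, and the only upper-level $i$ with $2m-m\le i\le m$ is $i=m$, so $2m$ is a pendant vertex with $N_{\mathcal{W}_{2m}}(2m)=\{m\}$. Hence in $G_8=\mathcal{W}_{2m}\setminus\mathcal{E}_8$ we get $N_{G_8}(2m-1)=N_{G_8}(2m)=\{m\}$, so the two vertices are symmetric. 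Relation~\eqref{eq1} then gives $Y_{2m-1}-Y_{2m}=|\mathcal{A}|-|\mathcal{B}|$, where $\mathcal{A}$ (resp.\ $\mathcal{B}$) consists of the $n$-paths in which $2m-1$ (resp.\ $2m$) plays the root $r$ and which use an edge of $\mathcal{E}_8$.

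\emph{Case $r\neq 1$.} By the symmetry convention $r\le\lceil n/2\rceil<n$, so $r$ is an interior vertex of $P_n$. A pendant vertex can never occupy an interior position, so $Y_{2m}=0$ and in particular $\mathcal{B}=\emptyset$. It then suffices to exhibit a single $n$-path with $2m-1$ in position $r$. Take $r-2$ distinct vertices of the clique $V_1\setminus\{1,m-1\}$ followed by $1$, then $2m-1$, then $m-1$, then $n-r-1$ further distinct clique vertices. This is possible because $m>n$ leaves enough clique vertices. Such a path uses $(1,2m-1)$, so $\mathcal{A}\neq\emptyset$ and $Y_{2m-1}>Y_{2m}$.

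\emph{Case $r=1$.} For a lower bound on $|\mathcal{A}|$, I will count the paths $(2m-1,m-1,w_3,\dots,w_n)$ in which $w_3,\dots,w_n$ are distinct vertices of $\{1,\dots,m-2\}$. These are $n$-paths because $V_1$ is a clique, and each uses the edge $(m-1,2m-1)$. Their number is $(m-2)!/(m-n)!\ge\binom{m-3}{n-2}$.

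For an upper bound on $|\mathcal{B}|$, every path in $\mathcal{B}$ starts $(2m,m,\dots)$, since $2m$ is pendant. It must also contain $2m-1$ together with at least one of the vertices $1$ or $m-1$. Proposition~\ref{p2} with $s=4\le n$ and $U_1=\{2m\}$, $U_2=\{m\}$, $U_3=\{2m-1\}$, $U_4=\{1,m-1\}$ gives $|\mathcal{B}|\le 2\,n!\binom{2m-4}{n-4}$. Condition~\eqref{eq8} then yields $|\mathcal{A}|>|\mathcal{B}|$.

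The only delicate points are checking that $2m$ is genuinely pendant in $\mathcal{W}_{2m}$, and keeping the crude Proposition~\ref{p2} bound on $|\mathcal{B}|$ below the explicit count for $\mathcal{A}$. The second point is exactly what hypothesis~\eqref{eq8} is designed to guarantee.
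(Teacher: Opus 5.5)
Your proposal is correct and follows the paper's proof essentially step for step. It uses the same edge set $\mathcal{E}_8$ with common neighborhood $\{m\}$ in $G_8$, the same pendant-vertex argument giving $\mathcal{B}=\emptyset$ for $r\neq 1$, and the same application of Proposition~\ref{p2} to the vertices $2m, m, 2m-1$ and some $u\in\{1,m-1\}$ for $r=1$. The only differences are cosmetic choices of witness paths: for $r=1$ the paper uses $(2m-1,1,u_1,\dots,u_{n-2})$ with increasing $u_i\in\{2,\dots,m-2\}$, while your ordered family through the edge $(m-1,2m-1)$ gives the stronger count $(n-2)!\binom{m-2}{n-2}\ge\binom{m-3}{n-2}$, and your explicit check that $2m$ is pendant correctly treats the paper's ``$\deg_{\mathcal{W}_{2m}}(2m)=m$'' as a slip.
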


\begin{proof}    	
	Let $\mathcal{E}_8 = \{(1, 2m-1), (m-1, 2m-1)\}$ and $G_8 = \mathcal{W}_{2m} \setminus \mathcal{E}_8$. The vertices $2m-1$ and $2m$ are symmetric in $G_8$ as they share the same neighborhood $\{m\}$. Therefore, by \eqref{eq1}, we have
	\begin{align} \label{eq9}
		Y_{2m-1} - Y_{2m} &= |\mathcal{A}| - |\mathcal{B}|, \\
		\mathcal{A} &= \{F \in \mathcal{P}_n(\mathcal{W}_{2m}, 2m-1, r) \mid E(F) \cap \mathcal{E}_8 \neq \emptyset \}, \notag \\
		\mathcal{B} &= \{H \in \mathcal{P}_n(\mathcal{W}_{2m}, 2m, r) \mid E(H) \cap \mathcal{E}_8 \neq \emptyset \}. \notag
	\end{align}
	
We distinguish two cases depending on the condition of Lemma~\ref{lemma9}.
	
	\medskip
	\noindent \textbf{Case 1.} $r \neq 1$.
	
In this case, the set $\mathcal{B}$ is empty. Indeed, since $\deg_{\mathcal{W}_{2m}}(2m) = m$, vertex $2m$ must be an end-vertex of any $n$\nobreakdash-path, whereas every path in $\mathcal{B}$ requires it to correspond to the root $r$ with $1 < r \le \lceil n/2 \rceil$, a contradiction. In contrast, $\mathcal{A}$ is non-empty since it contains the $n$-path 
	\[
	(r-1, \, r-2, \dots, \, 1, \, 2m-1, \, m, \, m-1, \dots, \, m-n+r+1).
	\]
Thus, $|\mathcal{A}| > |\mathcal{B}|$, and \eqref{eq9} implies $Y_{2m-1} > Y_{2m}$.
	
\medskip
\noindent \textbf{Case 2.} $r = 1$ and \eqref{eq8} holds.
	
In this setting, we establish the following estimates for the cardinalities of $\mathcal{A}$ and $\mathcal{B}$:
\begin{align}                 			
	|\mathcal{A}| &\geq \binom{m-3}{n-2}, \label{eq10} \\
	|\mathcal{B}| &\leq 2n! \binom{2m-4}{n-4}. \label{eq11}
\end{align}    

To verify \eqref{eq10}, consider an arbitrary subset of $n-2$ vertices $u_1 < u_2 < \dots < u_{n-2}$ chosen from the set $\{2, 3, \dots, m-2\}$. Observe that the $n$\nobreakdash-path $(2m-1, 1, u_1, u_2, \dots, u_{n-2})$ belongs to $\mathcal{A}$. Since distinct subsets yield distinct $n$\nobreakdash-paths, $|\mathcal{A}|$ is at least the number of such choices, which is exactly $\binom{m-3}{n-2}$.
\smallskip	

The estimate \eqref{eq11} follows from Proposition~\ref{p2}, noting that every $n$\nobreakdash-path $H \in \mathcal{B}$ must contain the vertex set $\{2m, m, 2m-1, u\}$ for some $u \in \{1, m-1\}$. Indeed, since $r=1$, $H$ originates at $2m$ and must pass through its only neighbor $m$. Furthermore, the condition $E(H) \cap \mathcal{E}_8 \neq \emptyset$ implies that $H$ also contains $2m-1$ and at least one vertex from $\{1, m-1\}$.

Combining \eqref{eq8}--\eqref{eq11}, we obtain $Y_{2m-1} > Y_{2m}$.        
\end{proof}
	
\begin{lemma} \label{lemma10}
	If either $r = 1$ and 
	\begin{equation} \label{eq12}
		\frac{m}{4} \binom{m/4}{n-3} > 3n! \binom{2m-3}{n-3},
	\end{equation}
	or $r \neq 1$ and 
	\begin{equation} \label{eq13}
		\frac{m}{4} \binom{m/4}{n-4} > 5n! \binom{2m-4}{n-4},
	\end{equation}
	then $Y_{2m-2} > Y_{2m-1}$.
\end{lemma}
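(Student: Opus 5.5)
The plan is to follow the same scheme as Lemma~\ref{lemma9}. By Remark~1, $N_{\mathcal{W}_{2m}}(2m-1)=\{1,m-1,m\}$ and $N_{\mathcal{W}_{2m}}(2m-2)=\{m-2,m-1,m\}$. So we take $\mathcal{E}_9=\{(1,2m-1),(m-2,2m-2)\}$ and $G_9=\mathcal{W}_{2m}\setminus\mathcal{E}_9$. In $G_9$ both vertices have neighborhood $\{m-1,m\}$, hence they are symmetric, and \eqref{eq1} gives $Y_{2m-2}-Y_{2m-1}=|\mathcal{A}|-|\mathcal{B}|$. Here $\mathcal{A}$ consists of the $n$-paths rooted (at $r$) at $2m-2$ that meet $\mathcal{E}_9$, and $\mathcal{B}$ is the analogous family for $2m-1$. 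The cases $r=1$ and $r\neq 1$ are then handled separately, with a lower bound for $|\mathcal{A}|$ built explicitly and an upper bound for $|\mathcal{B}|$ taken from Proposition~\ref{p2}.

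\textbf{Lower bounds for $|\mathcal{A}|$.} The factor $\frac{m}{4}\binom{m/4}{\cdot}$ suggests the following construction. Fix a lower-level vertex $w$ with $w-m$ in a window of length about $m/4$ (say around $[m/2,3m/4]$). By Remark~1, $w$ is adjacent to $m-2$ and to every upper vertex in $[w-m,m]$. Then choose the remaining upper vertices as an increasing sequence from a window of size about $m/4$ near the right end, for instance $\{3m/4,\dots,m-3\}$; these are all adjacent to $w$ and to each other.

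For $r=1$ this gives the paths $(2m-2,\,m-2,\,w,\,u_1,\dots,u_{n-3})$, and their number is at least $\frac m4\binom{m/4}{n-3}$. For $r\neq1$ the vertex $2m-2$ must be interior. I would make its other neighbor on the path equal to $m-1$ or $m$, and place $r-2$ further upper vertices on that side. Choosing these upper vertices outside both windows makes the count at least $\frac m4\binom{m/4}{n-4}$, since roughly one more vertex is now forced. Distinct choices clearly give distinct paths.

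\textbf{Upper bounds for $|\mathcal{B}|$.} For $r=1$, every $H\in\mathcal{B}$ starts at $2m-1$ and uses an edge of $\mathcal{E}_9$. A naive application of Proposition~\ref{p2} fixes only $\{2m-1,1\}$ and gives $\binom{2m-2}{n-2}$, which is too large. So the plan is to classify $H$ according to how it uses the edges, and to exhibit three forced vertices in each class; applying Proposition~\ref{p2} with $s=3$ to at most three classes then gives $3n!\binom{2m-3}{n-3}$.
\begin{itemize}
\item If $H$ uses $(m-2,2m-2)$, it contains $\{2m-1,m-2,2m-2\}$.
\item If $H=(2m-1,1,x,\dots)$ with $x=m+1$, it contains $\{2m-1,1,m+1\}$.
\item If $H=(2m-1,1,x,\dots)$ with $x$ an upper vertex, the fixed vertices $\{2m-1,1\}$ are not enough. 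Here I would compare with $\mathcal{A}$ directly by a partial injection: the map $1\mapsto m-2$, $2m-1\mapsto 2m-2$, which is valid because $N[1]\subseteq N[m-2]\cup\{2m-1\}$ by Remark~1. The paths where this map fails are those already passing through $m-2$ or $2m-2$, which again contain three fixed vertices.
\end{itemize}
For $r\neq1$, the vertex $2m-1$ is interior, so $H$ contains $2m-1$, both of its path-neighbors from $\{1,m-1,m\}$, and one endpoint of an edge of $\mathcal{E}_9$. Enumerating these four-vertex configurations (there are at most five after accounting for the injection) gives $5n!\binom{2m-4}{n-4}$. Combining these bounds with \eqref{eq12}, respectively \eqref{eq13}, yields $|\mathcal{A}|>|\mathcal{B}|$.

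\textbf{Main obstacle.} The bookkeeping for $\mathcal{B}$ is the hard part. The class of paths $(2m-1,1,x,\dots)$ with $x$ an upper vertex has only two forced vertices, so it must be absorbed into $\mathcal{A}$ via the relabeling injection. One then has to verify that the injection's image avoids the explicitly constructed part of $\mathcal{A}$, or else compare totals correctly. Getting the constants $3$ and $5$ exactly will require care with that case split.
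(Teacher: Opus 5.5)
Your strategy is the paper's. It uses the same $\mathcal{E}_9$ and $G_9$ and the same relabeling $1\mapsto m-2$, $2m-1\mapsto 2m-2$ on paths that use $(1,2m-1)$ and avoid $\{m-2,2m-2\}$ (the paper's $g_1$). It adds an explicit family of size $\frac m4\binom{m/4}{\cdot}$ in which $m-2$ is followed by a lower vertex, and bounds the rest of $\mathcal{B}$ by Proposition~\ref{p2}. As written, however, the argument does not close. \textbf{The step you leave open is the one that makes the two lower bounds for $|\mathcal{A}|$ additive.} You need the image of the relabeling to be disjoint from the explicit family; otherwise you only get $|\mathcal{A}|$ at least the larger of the two, which does not beat $|\mathcal{B}_1|+3n!\binom{2m-3}{n-3}$. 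The paper builds disjointness into its partition: $\mathcal{A}_1$ and $\mathcal{A}_2$ are separated by whether $N_F(m-2)$ meets $V=\{m+2,\dots,\frac{5m}{4}+1\}$. In an image path, the path-neighbors of $m-2$ are $2m-2$ and possibly $y_1$, the successor of $1$ in $H$, so $y_1\in N(1)\setminus\{2m-1\}=\{2,\dots,m+1\}$. In the explicit paths, $m-2$ is followed by a lower vertex $v\ge m+2$, which is not adjacent to $1$. (With your window, $w\ge 3m/2$, the same observation works.)

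The bookkeeping also needs fixing. For $r=1$, your case list yields $4n!\binom{2m-3}{n-3}$, not $3n!\binom{2m-3}{n-3}$. The class using $(m-2,2m-2)$ costs one unit, the class $x=m+1$ one unit, and the failures of the relabeling (through $m-2$ or through $2m-2$) two units; condition~\eqref{eq12} does not dominate four units. The $x=m+1$ class is unnecessary: $m+1$ is adjacent to $m-2$, so the relabeling covers it. Its image avoids the explicit family because the explicit lower vertex is never $m+1$, which is why $V$ starts at $m+2$. Dropping that class gives $2+1=3$, which is exactly the paper's $\mathcal{B}_2$/$\mathcal{B}_3$ count.

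Your windows are also too small. Taking $w-m\in[m/2,3m/4]$ and $u_i\in\{3m/4,\dots,m-3\}$ gives $(\frac m4+1)\binom{m/4-2}{n-3}$ paths. This is strictly less than $\frac m4\binom{m/4}{n-3}$; already for $n=4$, $(t+1)(t-2)<t^2$ with $t=m/4$. Since the hypothesis is stated in exactly this quantity, you need two windows of exactly $m/4$ vertices, complete to each other, such as the paper's $U=\{\frac m4+1,\dots,\frac m2\}$ and $V$.

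For $r\neq1$, the clean count takes all $n-4$ upper vertices from $U$ in increasing order and splits them around the forced segment. This gives the paths $(u_1,\dots,u_{r-2},m,2m-2,m-2,v,u_{r-1},\dots,u_{n-4})$, exactly $\frac m4\binom{m/4}{n-4}$ of them, using $r-2\le n-4$. Your upper bound of five configurations for $r\neq 1$ matches the paper's $4+1$.
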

	
\begin{proof}	
	Let $\mathcal{E}_9 = \{(1, 2m-1), (m-2, 2m-2)\}$ and $G_9 = \mathcal{W}_{2m} \setminus \mathcal{E}_9$. The vertices $2m-2$ and $2m-1$ are symmetric in $G_9$ since they possess the same neighborhood $\{m-1, m\}$. Consequently, applying relation \eqref{eq1}, we deduce
	\begin{align} \label{eq14}
		Y_{2m-2} - Y_{2m-1} &= |\mathcal{A}| - |\mathcal{B}|, \\
		\mathcal{A} &= \{F \in \mathcal{P}_n(\mathcal{W}_{2m}, 2m-2, r) \mid E(F) \cap \mathcal{E}_9 \neq \emptyset \}, \notag \\
		\mathcal{B} &= \{H \in \mathcal{P}_n(\mathcal{W}_{2m}, 2m-1, r) \mid E(H) \cap \mathcal{E}_9 \neq \emptyset \}. \notag
	\end{align}
	
Since each of the conditions \eqref{eq12} and \eqref{eq13} implies that $m$ is divisible by 4, consider two vertex sets $U$ and $V$ in $\mathcal{W}_{2m}$, noting that $(u, v) \in E(\mathcal{W}_{2m})$ for all $u \in U$ and $v \in V$:
\begin{equation} \label{eq15}
	U = \left\{ \frac{m}{4}+1, \frac{m}{4}+2, \dots, \frac{m}{2} \right\}, \qquad V = \left\{ m+2, m+3, \dots, \frac{5m}{4}+1 \right\}.
\end{equation}
	
	Next, we partition the sets $\mathcal{A}$ and $\mathcal{B}$ as follows:		
	\begin{align} \label{eq16}
		\mathcal{A} &= \mathcal{A}_1 \sqcup \mathcal{A}_2 \sqcup \mathcal{A}_3, \quad \mathcal{B} = \mathcal{B}_1 \sqcup \mathcal{B}_2 \sqcup \mathcal{B}_3, \\
		\mathcal{A}_1 &= \{ F \in \mathcal{A} \mid (m-2, 2m-2) \in E(F), \ N_F(m-2) \cap V = \emptyset \}, \notag \\
		\mathcal{A}_2 &= \{ F \in \mathcal{A} \mid (m-2, 2m-2) \in E(F), \ N_F(m-2) \cap V \neq \emptyset \}, \notag \\
		\mathcal{A}_3 &= \{ F \in \mathcal{A} \mid (m-2, 2m-2) \notin E(F) \}, \notag \\
		\mathcal{B}_1 &= \{ H \in \mathcal{B} \mid (1, 2m-1) \in E(H), \ V(H) \cap \{m-2, 2m-2\} = \emptyset \}, \notag \\
		\mathcal{B}_2 &= \{ H \in \mathcal{B} \mid (1, 2m-1) \in E(H), \ V(H) \cap \{m-2, 2m-2\} \neq \emptyset \}, \notag \\
		\mathcal{B}_3 &= \{ H \in \mathcal{B} \mid (1, 2m-1) \notin E(H) \}. \notag
	\end{align}
		
Our immediate goal is to show that
\begin{equation} \label{eq17}			
	|\mathcal{A}_1| \geq |\mathcal{B}_1|.
\end{equation}
To this end, we construct an injective mapping $g_1 \colon \mathcal{B}_1 \to \mathcal{A}_1$. Let $H \in \mathcal{B}_1$. Since $(1, 2m-1) \in E(H)$, $V(H) \cap \{m-2, 2m-2\} = \emptyset$, and the vertex $2m-1$ serves as the root $r$ in $H$, the path can be represented as
\[
H = (x_1, x_2, \dots, x_a, 2m-1, 1, y_1, y_2, \dots, y_{n-a-2}),
\]
where $a \in \{r-1, n-r\}$, and all vertices $x_p$ and $y_q$ (if present in $H$) are distinct elements from $V(\mathcal{W}_{2m}) \setminus \{1, m-2, 2m-2, 2m-1\}$. The image of $H$ under $g_1$ is given by
\[
F=g_1(H) = (x_1, x_2, \dots, x_a, 2m-2, m-2, y_1,  y_2, \dots, y_{n-a-2}).
\]

To see that $g_1$ is well-defined, observe that $F \in \mathcal{A}$ and $(m-2, 2m-2) \in E(F)$. Moreover, the neighborhood $N_F(m-2)$ is either $\{2m-2\}$ or $\{y_1, 2m-2\}$. In the latter case, since $(1, y_1) \in E(H)$ and $y_1 \neq 2m-1$, it follows that $y_1 \in \{2, 3, \ldots, m+1\}$. Consequently, since $m \ge 6$, in both cases we have $N_F(m-2) \cap V = \emptyset$, thereby placing $F$ in $\mathcal{A}_1$. The injectivity of $g_1$ is clear, which concludes the proof of \eqref{eq17}.

\smallskip		
The analysis now splits into two scenarios, depending on which condition of Lemma~\ref{lemma10} is satisfied.
		
\medskip
\noindent \textbf{Case 1.} $r = 1$ and \eqref{eq12} holds. In this case, $m/4 \geq n-3$.
		
\smallskip
We derive the following estimates for the cardinalities of $\mathcal{A}_2$, $\mathcal{B}_2$, and $\mathcal{B}_3$:
\begin{align}                 			
	|\mathcal{A}_2| &\geq \frac{m}{4} \binom{m/4}{n-3}, \label{eq18} \\
	|\mathcal{B}_2| &\leq 2n! \binom{2m-3}{n-3}, \label{eq19} \\
	|\mathcal{B}_3| &\leq n! \binom{2m-3}{n-3}. \label{eq20}
\end{align}

To prove \eqref{eq18}, we choose $n-3$ vertices $u_1 < u_2 < \dots < u_{n-3}$ from $U$ and a vertex $v$ from $V$. Since $|U| = |V| = m/4$, there are exactly $\frac{m}{4} \binom{m/4}{n-3}$ ways to choose these vertices. With each choice, we injectively associate an $n$\nobreakdash-path $(2m-2, m-2, v, u_1, u_2, \dots, u_{n-3})$ belonging to $\mathcal{A}_2$, which establishes \eqref{eq18}.
\smallskip

Inequality \eqref{eq19} follows from Proposition~\ref{p2}, since every $n$-path in $\mathcal{B}_2$ contains the vertices $1$, $2m-1$, and $w_1$ for some $w_1 \in \{m-2, 2m-2\}$.
\smallskip

To find an upper bound for $|\mathcal{B}_3|$, recall that any $n$-path $H$ in $\mathcal{B}_3$ avoids the edge $(1, 2m-1)$, so its definition forces it to contain the edge $(m-2, 2m-2)$. In fact, $H$ contains the vertices $m-2$, $2m-2$, and $2m-1$. Thus, \eqref{eq20} holds by Proposition~\ref{p2}.
\smallskip
		
Combining \eqref{eq12} and \eqref{eq16}--\eqref{eq20}, we obtain
\begin{align*}		
	|\mathcal{A}| \geq |\mathcal{A}_1| + |\mathcal{A}_2| &\geq |\mathcal{B}_1| + \frac{m}{4} \binom{m/4}{n-3} \\
	&> |\mathcal{B}_1| + 3n! \binom{2m-3}{n-3} \geq |\mathcal{B}_1| + |\mathcal{B}_2| + |\mathcal{B}_3| = |\mathcal{B}|.
\end{align*}
Hence, $Y_{2m-2} > Y_{2m-1}$ in Case~1 by \eqref{eq14}.

\medskip		
\noindent \textbf{Case 2.} $r \ne 1$ and \eqref{eq13} holds. In this case, $m/4 \geq n-4$.

\smallskip
We establish the following estimates for the cardinalities of $\mathcal{A}_2$, $\mathcal{B}_2$, and $\mathcal{B}_3$:
\begin{align}                 					
	|\mathcal{A}_2| &\geq \frac{m}{4} \binom{m/4}{n-4}, \label{eq21} \\
	|\mathcal{B}_2| &\leq 4n! \binom{2m-4}{n-4}, \label{eq22} \\
	|\mathcal{B}_3| &\leq n! \binom{2m-4}{n-4}. \label{eq23}
\end{align}

To prove \eqref{eq21}, choose $n-4$ vertices $u_1 < u_2 < \dots < u_{n-4}$ from $U$ and a vertex $v \in V$. Since $|U| = |V| = m/4$, there are exactly $\frac{m}{4} \binom{m/4}{n-4}$ such choices. We map each selection to an $n$-path 
\[
(u_1, u_2, \dots, u_{r-2}, m, 2m-2, m-2, v, u_{r-1}, u_r, \dots, u_{n-4}) \in \mathcal{A}_2,
\]
which is well-defined because $0 \leq r-2 \leq \lceil n/2 \rceil - 2 \leq n-4$ for all $1 < r \leq \lceil n/2 \rceil$ and $n \geq 4$. The existence of this injective mapping confirms \eqref{eq21}.
\smallskip

Next, any $n$-path $H \in \mathcal{B}_2$ contains the edge $(1, 2m-1)$, with the vertex $2m-1$ as the root. Since $r \neq 1$ and $N_{\mathcal{W}_{2m}}(2m-1) = \{1, m-1, m\}$, this vertex must have a neighbor $w_2 \in \{m-1, m\}$ in $H$. Furthermore, $H$ intersects $\{m-2, 2m-2\}$ at some vertex $w_3$. Thus, the vertices $1, 2m-1, w_2, w_3$ belong to $H$, and \eqref{eq22} holds by Proposition~\ref{p2}.
\smallskip

To verify \eqref{eq23}, observe that any $n$-path $H \in \mathcal{B}_3$ contains the vertex $2m-1$, which corresponds to an internal root~$r$. Since $(1, 2m-1) \notin E(H)$, $(m-2, 2m-2) \in E(H)$, the structural restriction on $N_{\mathcal{W}_{2m}}(2m-1)$ forces $H$ to include  the edge $(m-1, 2m-1)$ and the vertex set $\{m-2, m-1, 2m-2, 2m-1\}$, immediately yielding \eqref{eq23} via Proposition~\ref{p2}.

It follows from \eqref{eq13}, \eqref{eq16}, \eqref{eq17}, and \eqref{eq21}--\eqref{eq23} that		
\begin{align*}		
	|\mathcal{A}| \geq |\mathcal{A}_1| + |\mathcal{A}_2| &\geq |\mathcal{B}_1| + \frac{m}{4} \binom{m/4}{n-4} \\
	&> |\mathcal{B}_1| + 5n! \binom{2m-4}{n-4} \geq |\mathcal{B}_1| + |\mathcal{B}_2| + |\mathcal{B}_3| = |\mathcal{B}|.
\end{align*}
In view of \eqref{eq14}, this gives the inequality $Y_{2m-2} > Y_{2m-1}$ for Case~2.
\end{proof}	
	
\begin{lemma} \label{lemma11}
	If 
	\begin{equation} \label{eq24}		
		\binom{m-4}{n-3} > 9n! \binom{2m-4}{n-4},
	\end{equation}	
	then 
	\[
	Y_{i} > Y_{i+1} \quad \text{for each } i \in \{m+1, m+2, \dots, 2m-3\}.
	\]		
\end{lemma}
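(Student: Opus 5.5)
Fix $i \in \{m+1, \dots, 2m-3\}$ and mimic Lemma~\ref{lemma4}. Set $\mathcal{E}_{10} = \{(i-m, i)\}$ and $G_{10} = \mathcal{W}_{2m} \setminus \mathcal{E}_{10}$. By Remark~1, $N_{G_{10}}(i) = N_{G_{10}}(i+1) = \{i-m+1, \dots, m\}$; note $i+1 \le 2m-2$, so vertex $2m-1$ with its special neighborhood is not involved. Relation~\eqref{eq1} then gives $Y_i - Y_{i+1} = |\mathcal{A}| - |\mathcal{B}|$, where $\mathcal{A}$ (resp. $\mathcal{B}$) consists of the $n$-paths rooted at $i$ (resp. $i+1$) that use the edge $(i-m, i)$.

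For the lower bound on $|\mathcal{A}|$, I build paths that leave $i$ through $i-m$ and continue in the upper clique. The vertex $i-m$ lies in $\{1, \dots, m-3\}$, and the upper vertices to its right are adjacent to it. If $r = 1$, take $(i, i-m, u_1, \dots, u_{n-2})$ with the $u$'s chosen in the clique. If $r \ne 1$, place $r-1$ vertices on one side of $i$ and put $i-m$ on the other side. To keep the count uniform in $i$, I would reserve $m$ as the other neighbour of $i$ (note $(i,m) \in E$). Concretely, I take paths
\[
(u_1, \dots, u_{r-2}, m, i, i-m, u_{r-1}, \dots, u_{n-3})
\]
with the $u$'s forming any $(n-3)$-subset of $V_1 \setminus \{i-m, m, \text{one more}\}$. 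For $r = 1$, I take $(i, i-m, m, u_1, \dots, u_{n-3})$. Either way this gives $|\mathcal{A}| \ge \binom{m-4}{n-3}$, since at most four forbidden vertices need excluding. Distinct subsets give distinct paths, and all edges exist because $V_1$ is a clique. For $r \ne 1$, the index bound $0 \le r-2 \le n-4$ is the same as in Lemma~\ref{lemma10}.

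For the upper bound on $|\mathcal{B}|$, every $H \in \mathcal{B}$ contains $i+1$, $i$ and $i-m$. Since $i$ is not the root and lies on the lower level, its $H$-neighbours are $i-m$ and one further vertex $w$, or $i$ is an end-vertex. I would split into cases according to where $i+1$ sits relative to $i$: $i+1$ adjacent to neither $i$ nor $i-m$ (lower vertices are independent, so $i+1$ and $i$ are nonadjacent), or attached via an upper vertex. Each case pins down a fourth vertex, e.g. the vertex of $H$ adjacent to $i+1$ on the path toward $i$, or the second neighbour of $i$. The cruder route would accept a small constant number of choices for a fourth constrained vertex. The aim is to cover $\mathcal{B}$ by at most nine families, each of the Proposition~\ref{p2} form with $s = 4$ and singleton or small $U_j$, each bounded by $n!\binom{2m-4}{n-4}$. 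Summing gives $|\mathcal{B}| \le 9n!\binom{2m-4}{n-4}$, and \eqref{eq24} yields $|\mathcal{A}| > |\mathcal{B}|$.

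The main obstacle is this bookkeeping: $s=3$ alone would give only $\binom{2m-3}{n-3}$, which is too weak, so a genuine fourth vertex must be forced. The fourth vertex should be a neighbour of $i+1$ in $H$ (if $i+1$ is the root with $r=1$, its unique successor), or a neighbour of $i$. These neighbours range over large sets, so I must instead force a vertex from a set of bounded size. One candidate comes from $i+1$ and $i$ both being needed on a single path through the upper level. I expect to refine this by using the edge $(i-m,i)$ together with the nonadjacency of $i$ and $i+1$, and by counting ordered positions. If a bounded fourth vertex proves elusive, I would absorb the linear factor of $|U_j|$ into the binomial by comparing $\binom{2m-3}{n-3}$ with $\binom{2m-4}{n-4}$ times a constant. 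The constant $9$ in \eqref{eq24} suggests the intended case count is a small product such as $3 \times 3$.
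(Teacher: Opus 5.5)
There is a genuine gap. The bound $|\mathcal{B}| \le 9n!\binom{2m-4}{n-4}$ you are aiming for is false, so no refinement of the case bookkeeping can produce it. Take $n=4$, $r=1$, $i=m+1$. The paths $(m+2, y, m+1, 1)$ with $y \in \{2,\dots,m\}$ all lie in $\mathcal{B}$, so $|\mathcal{B}| \ge m-1$. Here \eqref{eq24} reads $m-4 > 216 = 9\cdot 4!\binom{2m-4}{0}$, so $|\mathcal{B}| > 9n!\binom{2m-4}{n-4}$ precisely when the hypothesis holds.

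More generally, the paths $(m+2,y,m+1,1,z_1,\dots,z_{n-4})$, with the $z$'s an ordered selection from the clique, show that $|\mathcal{B}|$ is of order $m^{n-3}$. It even exceeds $\binom{m-4}{n-3}$, so comparing a lower bound on $|\mathcal{A}|$ with an upper bound on all of $|\mathcal{B}|$ cannot work. You could not find a bounded fourth vertex because a generic path in $\mathcal{B}$ has none: after $i+1$, $i$, $i-m$ it simply wanders through the clique. Your fallback fails too, since $\binom{2m-3}{n-3}\big/\binom{2m-4}{n-4} = (2m-3)/(n-3)$ is unbounded in $m$.

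The missing idea is cancellation: most of $\mathcal{B}$ must be matched injectively into $\mathcal{A}$, and only a thin remainder estimated by Proposition~\ref{p2}.

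\emph{The matching.} The paper sets $M=\{m-2,m-1,m\}$ and splits the paths of $\mathcal{B}$ avoiding $M$ into two types. In Type I, $i$ lies between $i+1$ and $i-m$; in Type II, $i-m$ lies between $i+1$ and $i$. A Type I path $(x_1,\dots,x_a,i+1,y_1,\dots,y_b,i,i-m,z_1,\dots)$ is sent to $(x_1,\dots,x_a,i,i-m,z_1,\dots,m,y_1,\dots,y_b)$. A Type II path $(x_1,\dots,x_a,i+1,y_1,\dots,y_b,i-m,i,z_1,\dots)$ is sent to $(x_1,\dots,x_a,i,i-m,y_b,\dots,y_1,m-1,z_1,\dots)$. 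These maps are valid for three reasons: $x_a\sim i+1$ implies $x_a\sim i$ by Remark~1, $m$ is adjacent to every vertex, and $y_1$ and $z_1$ are upper vertices. The images keep $i$ in the root position and meet $M$ exactly in $\{m\}$ or $\{m-1\}$, respectively.

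\emph{The remainder.} The leftover paths of $\mathcal{B}$ meet $M$. This is a genuinely forced fourth vertex from a $3$-element set, and it gives $|\mathcal{B}_3|\le 3n!\binom{2m-4}{n-4}$.

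\emph{The surplus.} The term $\binom{m-4}{n-3}$ must come from paths of $\mathcal{A}$ outside the images, for example those meeting $M$ only in $m-2$: $(i,i-m,u_1,\dots,u_{n-3},m-2)$ for $r=1$, and $(u_1,\dots,u_{r-2},i-m,i,m-2,u_{r-1},\dots,u_{n-3})$ otherwise. Your own surplus family routes through $m$, so it would overlap the Type I images and would need to be replaced.

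Finally, the constant $9$ in \eqref{eq24} is not a $3\times 3$ case count for this lemma. Only $3$ is needed here; the $9$ is a uniform constant shared with Lemma~\ref{lemma19}.
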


\begin{proof}	
Fix $i \in \{m+1, m+2, \dots, 2m-3\}$. Let $\mathcal{E}_{10} = \{(i-m, i)\}$ and $G_{10} = \mathcal{W}_{2m} \setminus \mathcal{E}_{10}$. The vertices $i$ and $i+1$ are symmetric in $G_{10}$ since they have identical neighborhoods:
\[
N_{G_{10}}(i) = N_{G_{10}}(i+1) = \{i-m+1, i-m+2, \dots , m\}.
\]
Consequently, by equation~\eqref{eq1}, we obtain
\begin{align} \label{eq25}
	Y_{i} - Y_{i+1} &= |\mathcal{A}| - |\mathcal{B}|, \\
	\mathcal{A} &= \{F \in \mathcal{P}_n(\mathcal{W}_{2m}, i, r) \mid (i-m, i) \in E(F) \}, \notag \\
	\mathcal{B} &= \{H \in \mathcal{P}_n(\mathcal{W}_{2m}, i+1, r) \mid (i-m, i) \in E(H) \}. \notag
\end{align}
						
We note that since every $H \in \mathcal{B}$ contains the vertex $i+1$ and the edge $(i-m, i)$, the path $H$ must belong to one of two types:
\begin{align*}
	&\text{Type I:} \quad (\dots, i+1, \dots, i, i-m, \dots), \\
	&\text{Type II:} \quad (\dots, i+1, \dots, i-m, i, \dots).
\end{align*}

Let $M = \{m-2, m-1, m\}$. We decompose $\mathcal{A}$ and $\mathcal{B}$ as follows:
\begin{align} \label{eq26}
	\mathcal{A} &= \mathcal{A}_1 \sqcup \mathcal{A}_2 \sqcup \mathcal{A}_3 \sqcup \mathcal{A}_4, \quad \mathcal{B} = \mathcal{B}_1 \sqcup \mathcal{B}_2 \sqcup \mathcal{B}_3, \\
	\mathcal{A}_1 &= \{F \in \mathcal{A} \mid V(F) \cap M = \{m\}\}, \notag \\
	\mathcal{A}_2 &= \{F \in \mathcal{A} \mid V(F) \cap M = \{m-1\}\}, \notag \\
	\mathcal{A}_3 &= \{F \in \mathcal{A} \mid V(F) \cap M = \{m-2\}\}, \notag \\
	\mathcal{A}_4 &= \mathcal{A} \setminus (\mathcal{A}_1 \cup \mathcal{A}_2 \cup \mathcal{A}_3), \notag \\
	\mathcal{B}_1 &= \{H \in \mathcal{B} \mid H \text{ is of Type I}, \ V(H) \cap M = \emptyset \}, \notag \\
	\mathcal{B}_2 &= \{H \in \mathcal{B} \mid H \text{ is of Type II}, \ V(H) \cap M = \emptyset \}, \notag \\
	\mathcal{B}_3 &= \{H \in \mathcal{B} \mid V(H) \cap M \neq \emptyset \}. \notag
\end{align}
		
Next, we prove the estimates:
	\begin{align}				
		|\mathcal{A}_1| &\geq |\mathcal{B}_1|, \label{eq27} \\
		|\mathcal{A}_2| &\geq |\mathcal{B}_2|, \label{eq28} \\
		|\mathcal{A}_3| &\geq \binom{m-4}{n-3}, \label{eq29} \\
		|\mathcal{B}_3| &\leq 3n! \binom{2m-4}{n-4}. \label{eq30}
	\end{align}	
		
Inequality \eqref{eq27} holds via an injective mapping $g_2 \colon \mathcal{B}_1 \to \mathcal{A}_1$. By definition, every path $H \in \mathcal{B}_1$ can be written as
\[
H = (x_1, x_2, \ldots, x_a, i+1, y_1, y_2, \ldots, y_b, i, i-m, z_1, z_2, \ldots, z_{n-a-b-3}),
\]
where $a \in \{r-1, n-r\}$, and all vertices $x_p$, $y_q$, and $z_j$ actually occurring in $H$ are distinct and belong to $V(\mathcal{W}_{2m}) \setminus (\{i, i+1, i-m\} \cup M)$. The image of $H$ under $g_2$ is given by 
\[
g_2(H) = (x_1, x_2, \ldots, x_a, i, i-m, z_1,  z_2, \ldots, z_{n-a-b-3}, m, y_1, y_2, \ldots, y_b).
\]

The estimate \eqref{eq28} is obtained analogously via an injective mapping $g_3 \colon \mathcal{B}_2 \to \mathcal{A}_2$. Every path $H \in \mathcal{B}_2$ can be represented as
\[
H = (x_1, x_2, \dots, x_a, i+1, y_1, y_2, \dots, y_b, i-m, i, z_1, z_2, \dots, z_{n-a-b-3}),
\]
where the constraints on $a$ and the remaining vertices are identical to those in the proof of \eqref{eq27}. Under $g_3$, this path is mapped to
\[
g_3(H) = (x_1, x_2, \dots, x_a, i, i-m, y_b, y_{b-1}, \dots, y_1, m-1, z_1, z_2, \dots, z_{n-a-b-3}). 
\]

To establish \eqref{eq29}, we associate each subset of $n-3$ vertices $u_1 < u_2 < \dots < u_{n-3}$ from $\{1, 2, \dots, m-3\} \setminus \{i-m\}$ with an $n$-path in $\mathcal{A}_3$ defined by
\[ 
\begin{cases} 
	(i, i-m, u_1, u_2, \dots, u_{n-3}, m-2), & \text{if } r=1, \\ 
	(u_1, u_2, \dots, u_{r-2}, i-m, i, m-2, u_{r-1}, u_r, \dots, u_{n-3}), & \text{if } r \neq 1. 
\end{cases} 
\] 
Since this construction is injective and the number of such vertex subsets is exactly $\binom{m-4}{n-3}$, the inequality \eqref{eq29} is immediate.

Finally, \eqref{eq30} follows from Proposition~\ref{p2} and the fact that every $n$-path in $\mathcal{B}_3$ contains the vertex set $\{i, i-m, i+1, u\}$ for some $u \in M$.

Combining \eqref{eq24} and \eqref{eq26}--\eqref{eq30} yields the chain of inequalities
\begin{align*}		
	|\mathcal{A}| \geq |\mathcal{A}_1| + |\mathcal{A}_2|+|\mathcal{A}_3| &\geq |\mathcal{B}_1| +|\mathcal{B}_2|+ \binom{m-4}{n-3} \\
	&> |\mathcal{B}_1| +|\mathcal{B}_2|+ 9n! \binom{2m-4}{n-4} > |\mathcal{B}_1| + |\mathcal{B}_2| + |\mathcal{B}_3| = |\mathcal{B}|.
\end{align*}
Consequently, by~\eqref{eq25}, $Y_{i} > Y_{i+1}$ holds for each $i \in \{m+1, m+2, \dots, 2m-3\}$.
	\end{proof}	
		
\begin{lemma} \label{lemma12}
	If \eqref{eq24} is valid, then  $Y_{1} > Y_{m+1}$.		
\end{lemma}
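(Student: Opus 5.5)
The plan is to use the symmetry method of Section~2 with a single deleted edge. In $\mathcal{W}_{2m}$ we have $N_{\mathcal{W}_{2m}}(1)=\{2,\dots,m+1\}\cup\{2m-1\}$, because the edge $(1,2m)$ was removed, and $N_{\mathcal{W}_{2m}}(m+1)=\{1,\dots,m\}$. So with $\mathcal{E}_{11}=\{(1,2m-1)\}$ and $G_{11}=\mathcal{W}_{2m}\setminus\mathcal{E}_{11}$ we get $N_{G_{11}}[1]=N_{G_{11}}[m+1]=\{1,\dots,m+1\}$. By \eqref{eq1}, $Y_1-Y_{m+1}=|\mathcal{A}|-|\mathcal{B}|$, where $\mathcal{A}$ (resp.\ $\mathcal{B}$) is the set of $n$-paths with root at $1$ (resp.\ $m+1$) that contain the edge $(1,2m-1)$.

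I would split $\mathcal{B}=\mathcal{B}_1\sqcup\mathcal{B}_2$.
\begin{itemize}
\item $\mathcal{B}_2$ consists of the paths meeting $\{m-1,m\}$. Each such path contains $\{m+1,1,2m-1,w\}$ for some $w\in\{m-1,m\}$, so Proposition~\ref{p2} gives $|\mathcal{B}_2|\le 2n!\binom{2m-4}{n-4}$.
\item $\mathcal{B}_1$ consists of the paths avoiding $m-1$ and $m$. Since $N_{\mathcal{W}_{2m}}(2m-1)=\{1,m-1,m\}$, in every $H\in\mathcal{B}_1$ the vertex $2m-1$ is an end-vertex and its only neighbor on $H$ is $1$. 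Hence
\[
H=(x_1,\dots,x_a,\,m+1,\,y_1,\dots,y_b,\,1,\,2m-1),\qquad a\in\{r-1,n-r\}.
\]
\end{itemize}

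For $\mathcal{B}_1$ I would define the injection
\[
g(H)=(x_1,\dots,x_a,\,1,\,2m-1,\,m,\,y_b,\dots,y_1).
\]
It has the same number of vertices as $H$, and $1$ sits in the root position. I need to check each adjacency.
\begin{itemize}
\item $x_a$ is a neighbor of $m+1$, so $x_a\in V_1$ and is adjacent to $1$.
\item $(1,2m-1)$ and $(2m-1,m)$ are edges.
\item $m$ is adjacent to every vertex of $\mathcal{W}_{2m}$, in particular to $y_b$ (or the path simply ends at $m$ when $b=0$).
\item The vertex $m$ does not occur in $H$, so $g(H)$ has no repeated vertices.
\end{itemize}
The map is injective because $H$ is recovered by reading off the $x$'s and the $y$'s. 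Every image contains the edge $(2m-1,m)$ and does not contain $m-1$.

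Next I would exhibit a second family $\mathcal{A}'\subseteq\mathcal{A}$ disjoint from $g(\mathcal{B}_1)$, namely paths containing the segment $(1,2m-1,m-1)$ and no vertex $m$. For $r=1$ take $(1,2m-1,m-1,u_1,\dots,u_{n-3})$. For $r\neq1$ take $(u_1,\dots,u_{r-1},1,2m-1,m-1,u_r,\dots,u_{n-3})$. In both cases $u_1<\dots<u_{n-3}$ is chosen from $\{2,\dots,m-2\}$; these vertices lie in the clique $V_1$, so every required edge is present. This gives $|\mathcal{A}'|\ge\binom{m-3}{n-3}\ge\binom{m-4}{n-3}$.

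Combining everything with \eqref{eq24}:
\[
|\mathcal{A}|\ge|\mathcal{B}_1|+\binom{m-4}{n-3}>|\mathcal{B}_1|+9n!\binom{2m-4}{n-4}\ge|\mathcal{B}_1|+|\mathcal{B}_2|=|\mathcal{B}|,
\]
so $Y_1>Y_{m+1}$. The delicate part is the injection $g$: $m+1$ is not adjacent to $2m-1$, so a naive swap of $1$ and $m+1$ fails. Using the universal vertex $m$, which is available precisely because $\mathcal{B}_1$ excludes it, is what makes the rerouting valid. I would double-check the degenerate cases $a=0$ and $b=0$, and that $a$ ranges correctly over both orientations when $r=\lceil n/2\rceil$.
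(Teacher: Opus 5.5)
Your proof is correct and follows essentially the same route as the paper: the same deleted edge $(1,2m-1)$, the same split of $\mathcal{B}$ by whether the path meets $\{m-1,m\}$, the same rerouting injection through the universal vertex $m$, the same family of paths through $(1,2m-1,m-1)$, and the same use of Proposition~\ref{p2} together with \eqref{eq24}. The only differences are cosmetic: the paper keeps $y_1,\dots,y_b$ in their original order after $m$ rather than reversing them, and it chooses the $u_i$ from $\{2,\dots,m-3\}$. Your choice from $\{2,\dots,m-2\}$, giving $\binom{m-3}{n-3}$, is equally valid.
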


\begin{proof}	
	Let $\mathcal{E}_{11} = \{(1, 2m-1)\}$ and $G_{11} = \mathcal{W}_{2m} \setminus \mathcal{E}_{11}$. The vertices $1$ and $m+1$ are symmetric in $G_{11}$ as their closed neighborhoods coincide:
	\[
	N_{G_{11}}[1] = N_{G_{11}}[m+1] = \{1, 2, \dots , m+1\}.
	\]
	By virtue of \eqref{eq1}, we have	
	\begin{align} \label{eq31}
		Y_{1} - Y_{m+1} &= |\mathcal{A}| - |\mathcal{B}|, \\
		\mathcal{A} &= \{F \in \mathcal{P}_n(\mathcal{W}_{2m}, 1, r) \mid (1, 2m-1) \in E(F) \}, \notag \\
		\mathcal{B} &= \{H \in \mathcal{P}_n(\mathcal{W}_{2m}, m+1, r) \mid (1, 2m-1) \in E(H) \}. \notag
	\end{align}
	
We introduce the following partitions of $\mathcal{A}$ and $\mathcal{B}$:
\begin{align} \label{eq32}
	\mathcal{A} &= \mathcal{A}_1 \sqcup \mathcal{A}_2 \sqcup \mathcal{A}_3, \quad \mathcal{B} = \mathcal{B}_1 \sqcup \mathcal{B}_2, \\
	\mathcal{A}_1 &= \{F \in \mathcal{A} \mid V(F) \cap \{m-1, m\} = \{m\}\}, \notag \\
	\mathcal{A}_2 &= \{F \in \mathcal{A} \mid V(F) \cap \{m-1, m\} = \{m-1\}\}, \notag \\
	\mathcal{A}_3 &= \mathcal{A} \setminus (\mathcal{A}_1 \cup \mathcal{A}_2), \notag \\
	\mathcal{B}_1 &= \{H \in \mathcal{B} \mid V(H) \cap \{m-1, m\} = \emptyset \}, \notag \\
	\mathcal{B}_2 &= \{H \in \mathcal{B} \mid V(H) \cap \{m-1, m\} \neq \emptyset \}. \notag
\end{align}

The cardinalities of $\mathcal{A}_1$, $\mathcal{B}_1$, $\mathcal{A}_2$, and $\mathcal{B}_2$ satisfy the inequalities:
\begin{align}                 				
	|\mathcal{A}_1| &\geq |\mathcal{B}_1|, \label{eq33} \\
	|\mathcal{A}_2| &\geq \binom{m-4}{n-3}, \label{eq34} \\
	|\mathcal{B}_2| &\leq 2n! \binom{2m-4}{n-4}. \label{eq35}
\end{align}
		
To prove \eqref{eq33}, we construct an injective mapping $g_4 \colon \mathcal{B}_1 \to \mathcal{A}_1$. Let $H \in \mathcal{B}_1$. Since $(1, 2m-1) \in E(H)$, $N_{\mathcal{W}_{2m}}(2m-1) = \{1, m-1, m\}$, and $V(H) \cap \{m-1, m\} = \emptyset$, the path $H$ must terminate at $2m-1$. Hence, by the structure of $\mathcal{B}_1$, it is written as
\[
H = (x_1, x_2, \dots, x_a, m+1, y_1, y_2, \dots, y_{n-a-3}, 1, 2m-1),
\]
where $a \in \{r-1, n-r\}$, and all vertices $x_p$ and $y_q$, if they exist in $H$, are distinct and belong to $V(\mathcal{W}_{2m}) \setminus \{1, m-1, m, m+1, 2m-1\}$. The image of $H$ under $g_4$ is given by
\[
g_4(H) = (x_1, x_2, \dots, x_a, 1, 2m-1, m, y_1, y_2, \dots, y_{n-a-3}).
\]

To establish \eqref{eq34}, consider $n-3$ arbitrary vertices $u_1 < u_2 < \dots < u_{n-3}$ chosen from $\{2, 3, \dots, m-3\}$. We injectively map each such selection to the $n$-path in $\mathcal{A}_2$ defined by
\[
(u_1, u_2, \dots, u_{r-1}, 1, 2m-1, m-1, u_{r}, u_{r+1}, \dots, u_{n-3}).
\]
Since there are exactly $\binom{m-4}{n-3}$ such choices, the desired result holds.

\smallskip		
The upper bound \eqref{eq35} is a direct consequence of Proposition~\ref{p2}, since the vertex set of any $n$-path in $\mathcal{B}_2$ inevitably contains $\{1, m+1, 2m-1, u\}$ for some $u \in \{m-1, m\}$.

\smallskip		
Estimates \eqref{eq24} and \eqref{eq32}--\eqref{eq35} lead to 
\begin{align*}		
	|\mathcal{A}| \geq |\mathcal{A}_1| + |\mathcal{A}_2| &\geq |\mathcal{B}_1| + \binom{m-4}{n-3} \\
	&> |\mathcal{B}_1| + 9n! \binom{2m-4}{n-4} > |\mathcal{B}_1| + |\mathcal{B}_2| = |\mathcal{B}|,
\end{align*}
which, by \eqref{eq31}, implies $Y_1 > Y_{m+1}$.
\end{proof}
	
\begin{lemma} \label{lemma13}
		If 
		\begin{equation} \label{eq36}
			\binom{m-3}{n-2} > 5n! \binom{2m-3}{n-3},
		\end{equation}
		then $Y_2 > Y_1$.
	\end{lemma}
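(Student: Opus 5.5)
Following the pattern of Lemma~\ref{lemma6}, let $\mathcal{E}_{12} = \{(1, 2m-1), (2, m+2)\}$ and $G_{12} = \mathcal{W}_{2m} \setminus \mathcal{E}_{12}$. In $\mathcal{W}_{2m}$, vertex $1$ is adjacent to $V_1\setminus\{1\}$, to $m+1$ and to $2m-1$. Vertex $2$ is adjacent to $V_1\setminus\{2\}$, to $m+1$ and to $m+2$. Hence in $G_{12}$ both have closed neighborhood $\{1,2,\dots,m+1\}$, so they are symmetric there. By \eqref{eq1}, $Y_2 - Y_1 = |\mathcal{A}| - |\mathcal{B}|$, where $\mathcal{A}$ (respectively $\mathcal{B}$) consists of the $n$-paths rooted (at position $r$) at $2$ (respectively $1$) that use an edge of $\mathcal{E}_{12}$. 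The plan is to bound $|\mathcal{A}|$ from below by a binomial count of order $\binom{m-3}{n-2}$ and $|\mathcal{B}|$ from above by $5n!\binom{2m-3}{n-3}$. Condition \eqref{eq36} then finishes the proof.

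For the lower bound, I use only paths through the edge $(2,m+2)$ in which $2$ plays the root. If $r=1$, take $(2, m+2, u_1, \dots, u_{n-2})$ with $u_1<\dots<u_{n-2}$ chosen from $\{3,\dots,m\}\setminus\{\ldots\}$; a set of size $m-3$ such as $\{4,\dots,m\}$ works. Each $u_j$ lies in the clique $V_1$, and $m+2$ is adjacent to every $u\ge 2$ in $V_1$. If $r\neq1$, take
\[(u_1,\dots,u_{r-1}, 2, m+2, u_r,\dots,u_{n-2}),\]
which is valid since $r-1\le n-2$. Distinct subsets give distinct paths, so $|\mathcal{A}|\ge\binom{m-3}{n-2}$.

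For the upper bound, split $\mathcal{B}$ according to which edge of $\mathcal{E}_{12}$ is used. Paths using $(2,m+2)$ contain the three fixed vertices $1,2,m+2$. By Proposition~\ref{p2} with $s=3$ and singletons, there are at most $n!\binom{2m-3}{n-3}$ of them. Paths using $(1,2m-1)$ contain $1$ and $2m-1$. Since $N(2m-1)=\{1,m-1,m\}$, either $2m-1$ is an end of the path or it has a second neighbour in $\{m-1,m\}$ on the path. In the second case, Proposition~\ref{p2} with $U_3=\{m-1,m\}$ gives at most $2n!\binom{2m-3}{n-3}$ paths. In the first case, the path ends at $2m-1$ and starts elsewhere, so it contains a third vertex adjacent to $1$ or forced by the root position. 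The bound $2n!\binom{2m-3}{n-3}$ can again be obtained by noting that the path contains $1$, $2m-1$ and a vertex $w$ from a small set. For $r=1$ this is $w$, the neighbour of $1$ on the path other than $2m-1$. The constant $5$ in \eqref{eq36} suggests the intended budget of $1+2+2$.

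The main obstacle is this first case. If $r=1$ and the path starts at $1$ and goes to $2m-1$, it must continue to $m-1$ or $m$, so the count is absorbed. If $r\ne1$ and $2m-1$ is an end-vertex, then $1$ is not an end, so its other path-neighbour can be arbitrary. I would instead bound this subfamily by fixing $\{1,2m-1\}$ together with the constraint that $2m-1$ is an end adjacent to $1$. This gives a cruder count $n!\binom{2m-2}{n-2}$, which is too large. So I would refine it: the path here is rooted at $1$ with $r\ge2$ and ends at $2m-1$, so $1$ occupies position $2$ or $n-1$. For $r\ne 2$ this forces $2m-1$ to have a path-neighbour in $\{m-1,m\}$, unless $r=2$ (or $r=n-1$) with $2m-1$ at an end. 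For that residual case I would find an injection into $\mathcal{A}$, mirroring $g_1$: replace the end $2m-1$ attached to $1$ by $m+2$ attached to $2$, swapping the roles of $1$ and $2$. This makes those paths cancel, and the remaining terms fit within the $5n!\binom{2m-3}{n-3}$ budget.
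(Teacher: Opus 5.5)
Your setup, your lower bound $|\mathcal{A}|\ge\binom{m-3}{n-2}$ via $(u_1,\dots,u_{r-1},2,m+2,u_r,\dots,u_{n-2})$, and your Proposition~\ref{p2} bounds coincide with the paper's. Those bounds are $n!\binom{2m-3}{n-3}$ for paths through $1,2,m+2$, and $2n!\binom{2m-3}{n-3}$ for paths in which $2m-1$ is internal. For $r\neq 2$ (with $r\le\lceil n/2\rceil$) they already finish the proof, since $2m-1$ can then never be an end-vertex.

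The gap is in the case $r=2$, for the paths $H=(2m-1,1,x_1,\dots,x_{n-2})$. Your map $H\mapsto(m+2,2,x_1,\dots,x_{n-2})$ is not defined on all of them. For $n=4$, the path $(2m-1,1,3,m+2)$ would be sent to $(m+2,2,3,m+2)$. Likewise, $(2m-1,1,3,2)$ and $(2m-1,1,2,m+2)$ (the latter uses both edges of $\mathcal{E}_{12}$) collide with the inserted vertex $2$. Reading ``swapping the roles of $1$ and $2$'' as relabelling $2\mapsto 1$ in the tail cures the collision with $2$, but not the one with $m+2$. So the claim that ``those paths cancel'' is false as stated.

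You must restrict the injection to paths avoiding $\{2,m+2\}$. The paper does this with $\mathcal{B}_1$, the paths avoiding $L=\{2,m-1,m,m+2\}$, which then automatically end at $2m-1$. The discarded paths must be bounded separately. They contain $1$, $2m-1$ and a vertex of $\{2,m+2\}$, so Proposition~\ref{p2} gives at most $2n!\binom{2m-3}{n-3}$ of them. This is where the last $2$ of the budget $1+2+2=5$ really goes. It is not spent on the end-vertex paths as a whole, which you rightly found too numerous, but on those among them meeting $\{2,m+2\}$. Your closing count only uses $3n!\binom{2m-3}{n-3}$ and omits this term.

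You also need the images of the injection to be disjoint from the $\binom{m-3}{n-2}$ lower-bound paths, because you add both contributions inside $|\mathcal{A}|$. This does hold: $m+2$ is an end-vertex of every image, but it is internal in $(u_1,2,m+2,u_2,\dots,u_{n-2})$ since $n\ge 4$. The paper makes this explicit by splitting $\mathcal{A}$ according to $\deg_F(m+2)$ into $\mathcal{A}_1$ and $\mathcal{A}_2$. With these two repairs, your argument becomes exactly the paper's Case~1.
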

		
\begin{proof}	
Let $\mathcal{E}_{12} = \{(1, 2m-1), (2, m+2)\}$ and $G_{12} = \mathcal{W}_{2m} \setminus \mathcal{E}_{12}$. The vertices $1$ and $2$ are symmetric in $G_{12}$ since their closed neighborhoods are identical:
\[
N_{G_{12}}[1] = N_{G_{12}}[2] = \{1, 2, \dots, m+1\}.
\]
Therefore, by \eqref{eq1}, the difference between the $(P_n)_r$-degrees of vertices $2$ and $1$ in $\mathcal{W}_{2m}$ is given by
\begin{align} \label{eq37}
	Y_{2} - Y_{1} &= |\mathcal{A}| - |\mathcal{B}|, \\
	\mathcal{A} &= \{F \in \mathcal{P}_n(\mathcal{W}_{2m}, 2, r) \mid E(F) \cap \mathcal{E}_{12}\neq \emptyset \}, \notag \\
	\mathcal{B} &= \{H \in \mathcal{P}_n(\mathcal{W}_{2m}, 1, r) \mid E(H) \cap \mathcal{E}_{12} \neq \emptyset \}. \notag
\end{align}

The proof now proceeds by cases depending on whether $r = 2$ or $r \neq 2$.
\smallskip
		
\noindent \textbf{Case 1.} $r = 2$.

Let $L=\{2, m-1, m, m+2\}$. Here, the sets $\mathcal{A}$ and $\mathcal{B}$ are partitioned as follows:
\begin{align} \label{eq38}
	\mathcal{A} &= \mathcal{A}_1 \sqcup \mathcal{A}_2 \sqcup \mathcal{A}_3, \quad \mathcal{B} = \mathcal{B}_1 \sqcup \mathcal{B}_2 \sqcup \mathcal{B}_3, \\
	\mathcal{A}_1 &= \{F \in \mathcal{A} \mid (2, m+2) \in E(F), \ \deg_F(m+2) = 1\}, \notag \\
	\mathcal{A}_2 &= \{F \in \mathcal{A} \mid (2, m+2) \in E(F), \ \deg_F(m+2) \neq 1 \}, \notag \\
	\mathcal{A}_3 &= \{F \in \mathcal{A} \mid (2, m+2) \notin E(F)\}, \notag \\
	\mathcal{B}_1 &= \{H \in \mathcal{B} \mid (1, 2m-1) \in E(H), \ V(H) \cap L = \emptyset\}, \notag \\
	\mathcal{B}_2 &= \{H \in \mathcal{B} \mid (1, 2m-1) \in E(H), \ V(H) \cap L \neq \emptyset \}, \notag \\
	\mathcal{B}_3 &= \{H \in \mathcal{B} \mid (1, 2m-1) \notin E(H)\}. \notag
\end{align}	

Our next goal is to establish the estimates for the cardinalities of these subsets:
\begin{align}			
	|\mathcal{A}_1| &\geq |\mathcal{B}_1|, \label{eq39} \\
	|\mathcal{A}_2| &\geq \binom{m-3}{n-2}, \label{eq40} \\
	|\mathcal{B}_2| &\leq 4n! \binom{2m-3}{n-3}, \label{eq41} \\
	|\mathcal{B}_3| &\leq n! \binom{2m-3}{n-3}. \label{eq42} 
\end{align}

Inequality \eqref{eq39} is ensured by the existence of an injection $g_5 \colon \mathcal{B}_1 \to \mathcal{A}_1$. Let $H \in \mathcal{B}_1$. The vertex $2m-1$ must be an end-vertex of $H$, since $(1, 2m-1) \in E(H)$, $V(H) \cap L = \emptyset$, and $N_{\mathcal{W}_{2m}}(2m-1) = \{1, m-1, m\}$. Thus, the path $H$ takes the form
\[
H = (2m-1, 1, x_1, x_2, \dots, x_{n-2}),
\]
where all vertices $x_p$ are distinct and belong to $V(\mathcal{W}_{2m}) \setminus (L \cup \{1, 2m-1\})$. The image of $H$ under $g_5$ is given by
\[
g_5(H) = (m+2, 2, x_1, x_2, \dots, x_{n-2}).
\]

To obtain \eqref{eq40}, observe that there are exactly $\binom{m-3}{n-2}$ collections of $n-2$ vertices $u_1 < u_2 < \dots < u_{n-2}$ chosen from the set $\{3, 4, \dots, m-1\}$. Since $n \ge 4$, we can injectively map each such collection to an $n$-path in $\mathcal{A}_2$ of the form $(u_1, 2, m+2, u_2, u_3, \dots, u_{n-2})$, confirming the desired inequality.
\smallskip

To prove \eqref{eq41}, we use Proposition~\ref{p2}, noting that every path $H \in \mathcal{B}_2$ contains the vertices $1$, $2m-1$, and some $v \in L$.
\smallskip

To establish \eqref{eq42}, recall that every path $H \in \mathcal{B}_3$ contains the vertex $1$ and the edge $(2, m+2)$, so that $\{1, 2, m+2\} \subset V(H)$ and the assertion follows from Proposition~\ref{p2}.
\smallskip

The estimates \eqref{eq36} and \eqref{eq38}--\eqref{eq42} imply $|\mathcal{A}| > |\mathcal{B}|$:
\begin{align*}				
	|\mathcal{A}| \geq |\mathcal{A}_1| + |\mathcal{A}_2| &\geq |\mathcal{B}_1| + \binom{m-3}{n-2} \\
	&> |\mathcal{B}_1| + 5n! \binom{2m-3}{n-3} \geq |\mathcal{B}_1| + |\mathcal{B}_2| + |\mathcal{B}_3| = |\mathcal{B}|.
\end{align*}
Consequently, by \eqref{eq37}, this strict inequality guarantees that $Y_2 > Y_1$ for $r=2$.
\smallskip
	
\noindent \textbf{Case 2.} $r \ne 2$.
	
In this case, we have
\begin{align}				
	|\mathcal{A}| &\geq \binom{m-3}{n-2}, \label{eq43} \\
	|\mathcal{B}| &\leq 3n! \binom{2m-3}{n-3}. \label{eq44}	
\end{align}
	
The lower bound \eqref{eq43} is established by assigning each of the $\binom{m-3}{n-2}$ collections of $n-2$ vertices $u_1 < u_2 < \dots < u_{n-2}$ from $\{3, 4, \dots, m-1\}$ to an $n$-path in $\mathcal{A}$ of the form
\[
(u_1, u_2, \dots, u_{r-1}, 2, m+2, u_r, u_{r+1}, \dots, u_{n-2}).
\]
The injectivity of this mapping confirms the assertion.
\smallskip

To evaluate $|\mathcal{B}|$, we recall that every path $H \in \mathcal{B}$  contains the vertex $1$ and necessarily intersects the edge set $\mathcal{E}_{12} = \{(1, 2m-1), (2, m+2)\}$. If $H$ contains the edge $(2, m+2)$, then $\{1, 2, m+2\} \subset V(H)$, and Proposition~\ref{p2} restricts the number of such paths to $n! \binom{2m-3}{n-3}$. If $H$ contains the edge $(1, 2m-1)$, the conditions $N_{\mathcal{W}_{2m}}(2m-1) = \{1, m-1, m\}$ and $r \neq 2$ ensure that $H$ must pass through some vertex $v \in \{m-1, m\}$, implying that $\{1, 2m-1, v\} \subset V(H)$. By Proposition~\ref{p2}, there are at most $2n! \binom{2m-3}{n-3}$ such paths.  Summing these estimates validates \eqref{eq44}.
\smallskip

The inequalities \eqref{eq36}, \eqref{eq43}, and \eqref{eq44} imply that $|\mathcal{A}| > |\mathcal{B}|$, which, by \eqref{eq37}, yields $Y_2 > Y_1$ for $r \neq 2$.
\end{proof}		
			
\begin{lemma} \label{lemma14}
	If \eqref{eq24} holds, then
	\[
	Y_{i+1} > Y_{i} \quad \text{for each } i \in \{2, 3, \dots, m-4\}.
	\]	
\end{lemma}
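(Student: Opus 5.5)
Fix $i \in \{2, 3, \dots, m-4\}$ and proceed exactly as in Lemma~\ref{lemma7}. Set $\mathcal{E}_{13} = \{(i+1, m+i+1)\}$ and $G_{13} = \mathcal{W}_{2m} \setminus \mathcal{E}_{13}$. In $G_{13}$ the vertices $i$ and $i+1$ share the closed neighborhood $\{1, 2, \dots, m+i\}$. This holds because $m+i+1 \le 2m-3$, so the special vertices $2m-1$, $2m$ and the edge $(1,2m-1)$ are not involved. Hence \eqref{eq1} gives
\[
Y_{i+1} - Y_i = |\mathcal{A}| - |\mathcal{B}|,
\]
where $\mathcal{A}$ and $\mathcal{B}$ are the $n$-paths rooted (at position $r$) at $i+1$ and $i$, respectively, that use the edge $(i+1, m+i+1)$.

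Following the pattern of Lemma~\ref{lemma11}, set $M = \{m-2, m-1, m\}$. Split $\mathcal{B}$ into $\mathcal{B}_1$ (paths avoiding $M$ in which the edge occurs as $\dots, i+1, m+i+1, \dots$ after $i$ along the path), $\mathcal{B}_2$ (paths avoiding $M$ with the other orientation, i.e.\ $i, \dots, m+i+1, i+1, \dots$), and $\mathcal{B}_3$ (paths meeting $M$). Split $\mathcal{A}$ according to $V(F) \cap M$ being $\{m\}$, $\{m-1\}$, $\{m-2\}$, or something else. Then carry out the following steps.

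\begin{enumerate}
\item Build injections $\mathcal{B}_1 \to \mathcal{A}_1$ and $\mathcal{B}_2 \to \mathcal{A}_2$ by cut-and-reinsert. Write
\[
H = (x_1, \dots, x_a, i, y_1, \dots, y_b, \{i+1, m+i+1\}, z_1, \dots).
\]
Move the root to $i+1$ by starting the image with $(x_1, \dots, x_a, i+1, \dots)$. Reattach the detached segment through $m$ (respectively $m-1$), which is adjacent to every vertex of the graph except $2m$ (and $m-1$ is adjacent to all but $2m-1$'s exclusions as needed), as guaranteed by Remark~1. Also use that $i$ and $i+1$ are adjacent with identical neighbourhoods apart from $m+i+1$. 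Since the images contain exactly one vertex of $M$, and that vertex is determined, the maps can be inverted.
\item Show $|\mathcal{A}_3| \ge \binom{m-4}{n-3}$ by exhibiting, for each $(n-3)$-subset $u_1 < \dots < u_{n-3}$ of $\{1, \dots, m-3\} \setminus \{i+1\}$, an explicit path. For $r = 1$ take $(i+1, m+i+1, u_1, \dots, u_{n-3}, m-2)$, which is valid because $u_1 \ge \dots$; here $m+i+1$ needs neighbors $\ge i+1$, so restrict the $u$'s and adjust the count, or use $m-2$ placed right after $m+i+1$. For $r \ne 1$ insert $(m+i+1, i+1, m-2)$ in the middle, as in \eqref{eq29}.
\item Bound $|\mathcal{B}_3| \le 3n!\binom{2m-4}{n-4}$ via Proposition~\ref{p2}, since each such path contains $\{i, i+1, m+i+1, u\}$ with $u \in M$.
\end{enumerate}

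Then \eqref{eq24} closes the argument:
\[
|\mathcal{A}| \ge |\mathcal{B}_1| + |\mathcal{B}_2| + \binom{m-4}{n-3} > |\mathcal{B}|.
\]

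The main obstacle is making the injections in step~1 respect the neighborhood constraints of lower-level vertices, and handling the edge $(i, i+1)$ itself, which may lie inside $H$. If the $M$-vertex reinsertion fails there, I would first try treating paths containing the edge $(i, i+1)$ by simply swapping the labels $i$ and $i+1$ in the path. That swap is legitimate because the two vertices are closed twins in $G_{13}$, and it leaves $m+i+1$ attached to $i+1$.
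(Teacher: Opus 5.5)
Your framework ($\mathcal{E}_{13}$, the split by $M=\{m-2,m-1,m\}$, the two cut-and-reinsert injections, the Proposition~\ref{p2} bound on $\mathcal{B}_3$) matches the paper's transfer of Lemma~\ref{lemma11}. However, Step~2 fails, and at exactly the point the paper says does \emph{not} carry over verbatim from Lemma~\ref{lemma11}.

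\textbf{The gap in Step~2.} For $r\ne 1$ you copy \eqref{eq29} and get $(u_1,\dots,u_{r-2},m+i+1,i+1,m-2,u_{r-1},\dots,u_{n-3})$.
\begin{enumerate}
\item In Lemma~\ref{lemma11} the vertex between the root and the $u$'s was the upper vertex $i-m$, which is adjacent to every $u_j$. Here it is the lower vertex $m+i+1$, whose neighbourhood is only $\{i+1,\dots,m\}$.
\item So for $r\ge 3$ (which occurs as soon as $n\ge 5$), the edge $(u_{r-2},m+i+1)$ exists only when $u_{r-2}\ge i+2$.
\item For $i=m-4$, every admissible $u_j$ lies in $\{1,\dots,m-4\}$, so your construction produces no path at all. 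The bound is lost completely, not just up to a constant.
\item Your first $r=1$ option, $(i+1,m+i+1,u_1,\dots,u_{n-3},m-2)$, has the same defect. Restricting the $u$'s would destroy the $\binom{m-4}{n-3}$ you need against $9n!\binom{2m-4}{n-4}$ in \eqref{eq24}.
\end{enumerate}
The correct construction is the one you half-found for $r=1$, used for every $r$. Keep $m+i+1$ wedged between two of its own neighbours and let the upper root $i+1$ face the clique:
\[
(u_1,\dots,u_{r-1},i+1,m+i+1,m-2,u_r,\dots,u_{n-3}).
\]
This is valid for all $r$: $i+1\le m-3$ gives $(m+i+1,m-2)\in E(\mathcal{W}_{2m})$, and $u_j$, $i+1$, $m-2$ all lie in the clique $V_1$. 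The path meets $M$ only in $m-2$, so it lies in $\mathcal{A}_3$. This is what the paper does.

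\textbf{Smaller points.}
\begin{itemize}
\item Your adjacency facts are swapped: $m$ is adjacent to every other vertex, including $2m$, and it is $m-1$ that misses exactly $2m$. This is harmless, since no path in $\mathcal{B}_1\cup\mathcal{B}_2$ can contain $2m$ (its only neighbour is $m\in M$).
\item You should commit to explicit injections. Write $X=(x_1,\dots,x_a)$, $Y=(y_1,\dots,y_b)$, $Z=(z_1,\dots,z_c)$, and $\overline{Y}$ for $Y$ reversed. Then use $(X,i,Y,i+1,m+i+1,Z)\mapsto(X,i+1,m+i+1,Z,m,Y)$ on $\mathcal{B}_1$, and $(X,i,Y,m+i+1,i+1,Z)\mapsto(X,i+1,m+i+1,\overline{Y},m-1,Z)$ on $\mathcal{B}_2$. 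In the second map $b\ge 1$ automatically, because $(i,m+i+1)\notin E(\mathcal{W}_{2m})$.
\item The first map already covers $b=0$, i.e.\ paths using the edge $(i,i+1)$, so no fallback is needed.
\item Your proposed label swap would be invalid anyway. Swapping $i$ and $i+1$ in $H$ turns the edge $(i+1,m+i+1)$ into $(i,m+i+1)\notin E(\mathcal{W}_{2m})$. The twin property holds only in $G_{13}$, which lacks exactly that edge.
\end{itemize}
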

	
\begin{proof}	
	Fix $i \in \{2, 3, \dots, m-4\}$. Let $\mathcal{E}_{13} = \{(i+1, m+i+1)\}$ and $G_{13} = \mathcal W_{2m} \setminus \mathcal{E}_{13}$. Having identical closed neighborhoods, the vertices $i$ and $i+1$ are symmetric in $G_{13}$:
	\[
	N_{G_{13}}[i] = N_{G_{13}}[i+1] = \{1, 2, \dots , m+i\}.
	\]

From here on, the proof of Lemma~\ref{lemma14} follows along the same lines as that of Lemma~\ref{lemma11}, with the triple of vertices $\langle i, i+1, i-m \rangle$ replaced by $\langle i+1, i, m+i+1 \rangle$. The only difference lies in the justification of bound \eqref{eq29}, which is provided below.

\smallskip
		
Consider the $\binom{m-4}{n-3}$ collections of vertices $u_1 < u_2 < \dots < u_{n-3}$ chosen from the set $\{1, 2, \dots, m-3\} \setminus \{i+1\}$. Each such vertex selection  maps injectively to the $n$-path in~$\mathcal{A}_3$ given by
\[
(u_1, u_2, \dots, u_{r-1}, i+1, m+i+1, m-2, u_r, u_{r+1}, \dots, u_{n-3}).
\]
This confirms the validity of estimate \eqref{eq29}.
\end{proof}	
		
\begin{lemma} \label{lemma15}
	If \eqref{eq24} is satisfied, then 
	$Y_{m-2} > Y_{m-3}$.		
\end{lemma}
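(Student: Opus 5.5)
We follow the template of Lemmas~\ref{lemma11} and~\ref{lemma14}. Set $\mathcal{E}_{14} = \{(m-2, 2m-2)\}$ and $G_{14} = \mathcal{W}_{2m} \setminus \mathcal{E}_{14}$. In $\mathcal{W}_{2m}$, vertex $m-3$ has closed neighborhood $\{1, \dots, m, 2m-3\}$. Vertex $m-2$ has closed neighborhood $\{1,\dots,m,2m-3,2m-2\}$, since $2m-1$ is not adjacent to $m-2$. So in $G_{14}$ both equal $\{1, 2, \dots, m, 2m-3\}$, the two vertices are symmetric, and by \eqref{eq1}
\[
Y_{m-2} - Y_{m-3} = |\mathcal{A}| - |\mathcal{B}|,
\]
where $\mathcal{A}$ (respectively $\mathcal{B}$) consists of the $n$-paths rooted at $m-2$ (respectively at $m-3$) that contain the edge $(m-2, 2m-2)$. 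Every $H\in\mathcal{B}$ contains $m-3$ and this edge, so it is of Type~I, $(\dots, m-3, \dots, m-2, 2m-2, \dots)$, or of Type~II, $(\dots, m-3, \dots, 2m-2, m-2, \dots)$.

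The special set $M=\{m-2,m-1,m\}$ used before is no longer suitable, because $m-2$ is now one of the compared vertices. Since $N_{\mathcal{W}_{2m}}(2m-2)=\{m-2,m-1,m\}$, we instead take $M' = \{m-1, m, 2m-1\}$ and partition
\[
\mathcal{B} = \mathcal{B}_1 \sqcup \mathcal{B}_2 \sqcup \mathcal{B}_3,
\]
with $\mathcal{B}_1,\mathcal{B}_2$ the Type~I and Type~II paths avoiding $M'$ and $\mathcal{B}_3$ the rest. Every path in $\mathcal{B}_3$ contains $\{m-3, m-2, 2m-2, u\}$ for some $u\in M'$. Proposition~\ref{p2} then gives $|\mathcal{B}_3| \le 3n!\binom{2m-4}{n-4}$.

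A path avoiding $M'$ can use $2m-2$ only as an end-vertex, since its only other neighbors $m-1,m$ are excluded. So Type~I paths in $\mathcal{B}_1$ end at $2m-2$, giving $H=(x_1,\dots,x_a,m-3,y_1,\dots,y_b,m-2,2m-2)$. Type~II paths in $\mathcal{B}_2$ have the form $(x_1, \dots, x_a, m-3, y_1,\dots,y_b, 2m-2, m-2, \dots)$, which forces $b=0$ and an impossible adjacency $(m-3,2m-2)$, so in fact $\mathcal{B}_2=\emptyset$. This is convenient, but the cases need care.

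For $\mathcal{B}_1$ we imitate $g_2$: map $H$ to
\[
(x_1, \dots, x_a, m-2, 2m-2, m, y_1, \dots, y_b),
\]
which lands in the part $\mathcal{A}_1$ of $\mathcal{A}$ whose paths meet $M'$ exactly in $\{m\}$. It is valid because $m$ is adjacent to every vertex except $2m-1$, and the $y_q$ avoid $M'$. One checks that the root position is preserved, since the prefix $x_1,\dots,x_a$ is unchanged, and that the map is injective, since $H$ is recovered by reversing the construction. Here $y_1$ becomes adjacent to $m$ rather than to $m-3$; this is fine by Remark~1, because $m$ dominates. If $b=0$, the path simply ends at $m$ and the count of vertices is unchanged.

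For a large disjoint family we take paths in $\mathcal{A}$ meeting $M'$ exactly in $\{m-1\}$. Choose $n-3$ vertices $u_1<\dots<u_{n-3}$ from $\{1,\dots,m-3\}\setminus\{m-3\}$; to obtain exactly $\binom{m-4}{n-3}$ choices, use $\{1,\dots,m-4\}$. Map each choice to
\[
(u_1,\dots,u_{r-1}, m-2, 2m-2, m-1, u_r,\dots,u_{n-3}),
\]
which is a valid path since $V_1$ is a clique and $m-1$ is adjacent to $2m-2$. This gives $|\mathcal{A}_2|\ge\binom{m-4}{n-3}$.

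Combining with \eqref{eq24}, we get $|\mathcal{A}|\ge|\mathcal{B}_1|+\binom{m-4}{n-3}>|\mathcal{B}_1|+9n!\binom{2m-4}{n-4}\ge|\mathcal{B}|$. The main obstacle is verifying the well-definedness of the injection on $\mathcal{B}_1$ for every root position $r$ and every placement of $m-3$ relative to the $x$'s and $y$'s. It must also be checked that the image paths genuinely avoid $2m-1$ and $m-1$, so that they are disjoint from the explicit family of the previous step.
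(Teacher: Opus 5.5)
Your proof is correct and is essentially the paper's argument: you delete the same edge $(m-2,2m-2)$, use the same injection $H \mapsto (x_1,\dots,x_a,m-2,2m-2,m,y_1,\dots,y_b)$ into paths through $m$, the same explicit family through $(m-2,2m-2,m-1)$ of size $\binom{m-4}{n-3}$, and Proposition~\ref{p2} for the remainder; the only difference is that your larger exceptional set $M'=\{m-1,m,2m-1\}$ yields the bound $3n!\binom{2m-4}{n-4}$ instead of the paper's $2n!\binom{2m-4}{n-4}$, which is still absorbed by \eqref{eq24}. One slip to fix: the closed neighborhoods are $N_{\mathcal{W}_{2m}}[m-3]=\{1,\dots,2m-3\}$ and $N_{\mathcal{W}_{2m}}[m-2]=\{1,\dots,2m-2\}$, and $m$ is adjacent to every vertex, $2m-1$ included; this does not affect the symmetry in $G_{14}$, and the containment $N_{\mathcal{W}_{2m}}[m-3]\subseteq N_{\mathcal{W}_{2m}}[m-2]$ is exactly what gives the adjacency $x_a\sim m-2$ that you left unchecked.
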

\begin{proof}
	Let $\mathcal{E}_{14} = \{(m-2, 2m-2)\}$ and $G_{14} = \mathcal{W}_{2m} \setminus \mathcal{E}_{14}$. The vertices $m-3$ and $m-2$ are symmetric in $G_{14}$, as their closed neighborhoods coincide:
	\[
	N_{G_{14}}[m-3] = N_{G_{14}}[m-2] = \{1, 2, \dots , 2m-3\}.
	\]
Consequently, in view of \eqref{eq1}, we have 
	\begin{align} \label{eq45}
		Y_{m-2} - Y_{m-3} &= |\mathcal{A}| - |\mathcal{B}|, \\
		\mathcal{A} &= \{F \in \mathcal{P}_n(\mathcal{W}_{2m}, m-2, r) \mid (m-2, 2m-2) \in E(F) \}, \notag \\
		\mathcal{B} &= \{H \in \mathcal{P}_n(\mathcal{W}_{2m}, m-3, r) \mid (m-2, 2m-2) \in E(H) \}. \notag
	\end{align}
	
We decompose each of the sets $\mathcal{A}$ and $\mathcal{B}$ into disjoint classes:
	\begin{align} \label{eq46}
		\mathcal{A} &= \mathcal{A}_1 \sqcup \mathcal{A}_2, \quad \mathcal{B} = \mathcal{B}_1 \sqcup \mathcal{B}_2, \\
		\mathcal{A}_1 &= \{F \in \mathcal{A} \mid m \in V(F)\}, \notag \\
		\mathcal{A}_2 &= \{F \in \mathcal{A} \mid m \notin V(F)\}, \notag \\
		\mathcal{B}_1 &= \{H \in \mathcal{B} \mid V(H) \cap \{m-1, m\} = \emptyset\}, \notag \\
		\mathcal{B}_2 &= \{H \in \mathcal{B} \mid V(H) \cap \{m-1, m\} \neq \emptyset \}. \notag 	
	\end{align}
	
The following inequalities will be proved next:
	\begin{align}		
		|\mathcal{A}_1| &\geq |\mathcal{B}_1|, \label{eq47} \\
		|\mathcal{A}_2| &\geq \binom{m-4}{n-3} , \label{eq48} \\
		|\mathcal{B}_2| &\leq 2n! \binom{2m-4}{n-4}. \label{eq49}
	\end{align}
	
To establish \eqref{eq47}, we construct an injection $g_6 \colon \mathcal{B}_1 \to \mathcal{A}_1$. Note that every $H \in \mathcal{B}_1$ contains the edge $(m-2, 2m-2)$. Since $N_{\mathcal{W}_{2m}}(2m-2) = \{m-2, m-1, m\}$, the condition $V(H) \cap \{m-1, m\} = \emptyset$ forces $2m-2$ to be an end-vertex of $H$. Hence, 
\[
H = (x_1, x_2, \dots, x_a, m-3, y_1, y_2, \dots, y_{n-a-3}, m-2, 2m-2),
\]
where $a \in \{r-1, n-r\}$, and the remaining vertices $x_p$ and $y_q$  present in $H$ are distinct and belong to the set $V(\mathcal{W}_{2m}) \setminus \{m-3, m-2, m-1, m, 2m-2\}$. For each such $H$, its image is given by the $n$-path 
\[
g_6(H) = (x_1, x_2, \dots, x_a, m-2, 2m-2, m, y_1, y_2, \dots, y_{n-a-3}).
\]

The lower bound \eqref{eq48} is obtained by defining an injection from the set of $\binom{m-4}{n-3}$ vertex sequences $u_1 < u_2 < \dots < u_{n-3}$ in $\{2, 3, \dots, m-3\}$ into $\mathcal{A}_2$. This mapping assigns to each such sequence the $n$-path 
\[ 
(u_1, u_2, \dots, u_{r-1}, m-2, 2m-2, m-1, u_r, u_{r+1}, \dots, u_{n-3}) .  
\] 

The estimate \eqref{eq49} follows from Proposition~\ref{p2} and the fact that each $H \in \mathcal{B}_2$ contains the vertices $m-3$, $m-2$, $2m-2$, and some $v \in \{m-1, m\}$.
\smallskip
	
By \eqref{eq24} and \eqref{eq46}--\eqref{eq49}, the following chain leads to the strict inequality $|\mathcal{A}| > |\mathcal{B}|$:
\begin{align*}			
	|\mathcal{A}| = |\mathcal{A}_1| + |\mathcal{A}_2| &\geq |\mathcal{B}_1| + \binom{m-4}{n-3} \\
	&> |\mathcal{B}_1| + 9n! \binom{2m-4}{n-4} > |\mathcal{B}_1| + |\mathcal{B}_2| = |\mathcal{B}|.
\end{align*}
Together with \eqref{eq45}, this directly implies that $Y_{m-2} > Y_{m-3}$.
\end{proof}
		
\begin{lemma} \label{lemma16}
	If \eqref{eq24} holds, then $Y_{m-1} > Y_{m-2}$.
\end{lemma}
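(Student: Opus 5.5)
The plan is to reuse the scheme of Lemma~\ref{lemma15} with the single edge $\mathcal{E}_{15} = \{(m-1, 2m-1)\}$. By Remark~1, vertex $m-1$ is adjacent to all of $V_1$ and to the lower vertices $m+1, \dots, 2m-1$. Vertex $m-2$ is adjacent to $V_1$ and to $m+1, \dots, 2m-2$, but not to $2m-1$, since $N_{\mathcal{W}_{2m}}(2m-1)=\{1,m-1,m\}$. Hence in $G_{15} = \mathcal{W}_{2m} \setminus \mathcal{E}_{15}$ the two vertices have the common closed neighborhood
\[
N_{G_{15}}[m-1] = N_{G_{15}}[m-2] = \{1, 2, \dots, 2m-2\}.
\]
By \eqref{eq1}, $Y_{m-1} - Y_{m-2} = |\mathcal{A}| - |\mathcal{B}|$. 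Here $\mathcal{A}$ consists of the $n$-paths rooted at $m-1$ that contain $(m-1,2m-1)$, and $\mathcal{B}$ consists of those rooted at $m-2$ that contain this edge.

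I would partition the two sets as follows:
\begin{itemize}
\item $\mathcal{A}_1 = \{F \in \mathcal{A} \mid m \in V(F)\}$ and $\mathcal{A}_2 = \mathcal{A} \setminus \mathcal{A}_1$;
\item $\mathcal{B}_1 = \{H \in \mathcal{B} \mid V(H) \cap \{1, m\} = \emptyset\}$ and $\mathcal{B}_2 = \mathcal{B} \setminus \mathcal{B}_1$.
\end{itemize}
The set $\{1,m\}$ is used because it consists of the neighbors of $2m-1$ other than $m-1$. I then aim for three bounds: $|\mathcal{A}_1| \ge |\mathcal{B}_1|$, $|\mathcal{A}_2| \ge \binom{m-4}{n-3}$, and $|\mathcal{B}_2| \le 2n!\binom{2m-4}{n-4}$.

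\textbf{Upper bound on $\mathcal{B}_2$.} Every $H \in \mathcal{B}_2$ contains $m-2$, $m-1$, $2m-1$ and some $v \in \{1,m\}$. Proposition~\ref{p2} with $s=4$ then gives the bound directly.

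\textbf{Lower bound on $\mathcal{A}_2$.} For each choice $u_1<\dots<u_{n-3}$ from $\{2,\dots,m-3\}$, I take the $n$-path
\[
(u_1,\dots,u_{r-1},\, m-1,\, 2m-1,\, 1,\, u_r,\dots,u_{n-3}).
\]
It avoids $m$ and has $m-1$ in position $r$. It is well defined because $r-1 \le \lceil n/2\rceil - 1 \le n-3$ for $n\ge 4$, and all the required edges lie in the clique $V_1$ or are edges at $2m-1$. Distinct choices give distinct paths, so $|\mathcal{A}_2| \ge \binom{m-4}{n-3}$.

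\textbf{The injection $g_7\colon \mathcal{B}_1\to\mathcal{A}_1$ (main obstacle).} Take $H \in \mathcal{B}_1$. Since $V(H)$ misses $1$ and $m$, the vertex $2m-1$ must be an end-vertex of $H$, so
\[
H=(x_1,\dots,x_a,\, m-2,\, y_1,\dots,y_b,\, m-1,\, 2m-1),\qquad a\in\{r-1,n-r\},
\]
with all $x_p, y_q \notin \{1, m-2, m-1, m, 2m-1\}$. I set
\[
g_7(H) = (x_1,\dots,x_a,\, m-1,\, 2m-1,\, m,\, y_1,\dots,y_b),
\]
which places $m-1$ in the same position $a+1$, so the root is preserved. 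The points to check are the following.
\begin{itemize}
\item $(x_a, m-1)$ is an edge. This holds because $(x_a,m-2)$ is an edge and $x_a\notin\{m-1,2m-1\}$, so the monotonicity property in Remark~1 applies.
\item $(m, y_1)$ is an edge. This holds because $m$ is adjacent to every vertex other than itself.
\item The degenerate case $b=0$ causes no trouble: the image simply ends at $m$.
\end{itemize}
Injectivity is clear, since $H$ is recovered by reversing the rearrangement.

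Combining the three bounds with \eqref{eq24} gives
\[
|\mathcal{A}| \ge |\mathcal{B}_1| + \binom{m-4}{n-3} > |\mathcal{B}_1| + 9n!\binom{2m-4}{n-4} > |\mathcal{B}_1|+|\mathcal{B}_2| = |\mathcal{B}|,
\]
and therefore $Y_{m-1}>Y_{m-2}$.
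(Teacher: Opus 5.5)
Your proposal is correct and matches the paper's proof: the paper proves Lemma~\ref{lemma16} by transferring Lemma~\ref{lemma15} under the substitution $\langle m-3, m-2, m-1, 2m-2\rangle \mapsto \langle m-2, m-1, 1, 2m-1\rangle$. That substitution yields exactly your deleted edge $(m-1,2m-1)$, your partitions by $m \in V(F)$ and by $V(H)\cap\{1,m\}=\emptyset$, your injection, and your bounds $\binom{m-4}{n-3}$ and $2n!\binom{2m-4}{n-4}$. Your write-up also makes explicit the checks the paper leaves implicit: that $2m-1$ is an end-vertex, that $(x_a,m-1)$ is an edge by Remark~1, that $m$ is universal, and that $r-1\le n-3$.
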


\begin{proof}
Due to the structure of the graph~$\mathcal{W}_{2m}$, the proof is analogous to that of Lemma~\ref{lemma15}, where the quadruple $\langle m-3, m-2, m-1, 2m-2 \rangle$ is replaced by $\langle m-2, m-1, 1, 2m-1 \rangle$.
\end{proof}
	
Next, we consider the vertex $2m$, whose position in the overall $(P_n)_r$-degree ordering depends crucially on the parameters $n$ and $r$.
		
\begin{lemma} \label{lemma17} 
	The following statements hold:
	\begin{enumerate}
		\item If $r = 2$ and \eqref{eq36} is satisfied, then $Y_{m} > Y_{m-1}$;
		\item If $r \neq 2$, then $Y_{m-1} > Y_{m}$.
	\end{enumerate}
\end{lemma}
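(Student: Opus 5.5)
The key observation is that $m-1$ and $m$ become symmetric after deleting a single edge. By Remark~1 and the definition of $\mathcal{W}_{2m}$, vertex $m$ is adjacent to every other vertex of $\mathcal{W}_{2m}$. Vertex $m-1$ is adjacent to every other vertex except $2m$, because $2m-(m-1)=m+1>m$. Thus, setting $\mathcal{E}_{15}=\{(m,2m)\}$ and $G_{15}=\mathcal{W}_{2m}\setminus\mathcal{E}_{15}$, we get
\[
N_{G_{15}}[m-1]=N_{G_{15}}[m]=V(\mathcal{W}_{2m})\setminus\{2m\},
\]
so by \eqref{eq1} we have $Y_m-Y_{m-1}=|\mathcal{A}|-|\mathcal{B}|$. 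Here $\mathcal{A}$ (resp.\ $\mathcal{B}$) is the set of $n$-paths in which $m$ (resp.\ $m-1$) plays the root $r$ and which contain the edge $(m,2m)$. The second structural fact used throughout is $N_{\mathcal{W}_{2m}}(2m)=\{m\}$. Hence $2m$ is an end-vertex of every $n$-path containing it, and such a path always has the form $(2m,m,x_3,\dots,x_n)$.

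\textbf{Case $r\neq 2$.} I would show that $\mathcal{A}=\emptyset$ while $\mathcal{B}\neq\emptyset$. In any path of $\mathcal{A}$, vertex $m$ occupies position $2$ counted from the end $2m$, i.e.\ position $n-1$ from the other end. If $m$ played the root $r$ with $r\ne 2$, then $r=n-1$, which would give $n-1\le\lceil n/2\rceil$, i.e.\ $n\le 3$. This contradicts $n\ge4$; the case $r=1$ is likewise impossible for $n\ge 3$. For $\mathcal{B}$, I would exhibit explicit paths using the clique on $V_1$:
\begin{itemize}
\item for $r=1$, take $(m-1,1,2,\dots,n-3,m,2m)$;
\item for $r\ge 3$, take $(2m,m,u_1,\dots,u_{r-3},m-1,u_{r-2},\dots,u_{n-4})$ with distinct $u_j\in\{1,\dots,m-2\}$.
\end{itemize}
Reading the second path from the $2m$ end places $m-1$ at position $r$. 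Hence $|\mathcal{B}|>|\mathcal{A}|$, and $Y_{m-1}>Y_m$.

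\textbf{Case $r=2$.} Every $n$-path $(2m,m,x_3,\dots,x_n)$ belongs to $\mathcal{A}$. In particular, choosing $n-2$ vertices $u_1<\dots<u_{n-2}$ from $\{3,4,\dots,m-1\}$ yields injectively the path $(2m,m,u_1,\dots,u_{n-2})$, so $|\mathcal{A}|\ge\binom{m-3}{n-2}$, the same lower bound as in \eqref{eq40}. On the other side, every $H\in\mathcal{B}$ contains the three vertices $2m$, $m$ and $m-1$. Proposition~\ref{p2} with $s=3$ then gives $|\mathcal{B}|\le n!\binom{2m-3}{n-3}$. Condition \eqref{eq36} states $\binom{m-3}{n-2}>5n!\binom{2m-3}{n-3}$, which is stronger than needed, so $|\mathcal{A}|>|\mathcal{B}|$ and $Y_m>Y_{m-1}$.

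The only delicate step is the position bookkeeping in the case $r\neq2$. Since the root index is taken up to reversal, the argument must use both $r\le\lceil n/2\rceil$ and $n\ge4$ to rule out $m$ sitting at position $n-1$ as the root. It must also check that the explicit paths in $\mathcal{B}$ fit, i.e.\ that there are enough clique vertices; this holds because $m>n$. Everything else follows the template of Lemmas~\ref{lemma13}--\ref{lemma15}.
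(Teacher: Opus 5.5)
Your proof is correct and matches the paper's: the same deleted edge $(m,2m)$, the same argument via $N_{\mathcal{W}_{2m}}(2m)=\{m\}$ that no path rooted at $m$ survives when $r\neq 2$, and for $r=2$ the same $\binom{m-3}{n-2}$ lower bound against the Proposition~\ref{p2} bound $n!\binom{2m-3}{n-3}$. One small slip: your witness path for $r\ge 3$ has only $n-1$ vertices, so it should end at $u_{n-3}$ rather than $u_{n-4}$ (alternatively, the paper's single path $(1,2,\dots,r-1,m-1,r,\dots,n-3,m,2m)$ covers every $r\neq 2$ at once).
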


\begin{proof}
	The vertices $m-1$ and $m$ are symmetric in the graph $G_{15}=\mathcal{W}_{2m} \setminus \{(m, 2m)\}$ since $N_{G_{15}}[m-1] = N_{G_{15}}[m] = \{1, 2, \dots, 2m-1\}$. Hence, by \eqref{eq1}, we have
	\begin{align} \label{eq50}
		Y_{m-1} - Y_{m} &= |\mathcal{A}| - |\mathcal{B}|, \\
		\mathcal{A} &= \{F \in \mathcal{P}_n(\mathcal{W}_{2m}, m-1, r) \mid (m, 2m) \in E(F)\}, \notag \\
		\mathcal{B} &= \{H \in \mathcal{P}_n(\mathcal{W}_{2m}, m, r) \mid (m, 2m) \in E(H)\}. \notag
	\end{align}
		
We distinguish two cases by the conditions of Lemma~\ref{lemma17}.	
\smallskip
		
\noindent \textbf{Case 1:} $r=2$ and \eqref{eq36} is valid.

To estimate $|\mathcal{B}|$ from below, with each choice of vertices $u_1 < u_2 < \dots < u_{n-2}$ within the set $\{1, 2, \dots, m-3\}$, we may associate the $n$-path $(2m, m, u_1, u_2, \dots, u_{n-2}) \in \mathcal{B}$. Since there are exactly $\binom{m-3}{n-2}$ such choices, the injectivity of this assignment yields 
\begin{equation} \label{eq51}						
	|\mathcal{B}| \geq \binom{m-3}{n-2}.
\end{equation}

Since every $F \in \mathcal{A}$ contains the vertices $m-1$, $m$, and $2m$, Proposition~\ref{p2} gives the upper bound
\begin{equation} \label{eq52}				
	|\mathcal{A}| \leq n! \binom{2m-3}{n-3}.
\end{equation}
Combining \eqref{eq36}, \eqref{eq51}, and \eqref{eq52}, we arrive at the strict inequality $|\mathcal{B}| > |\mathcal{A}|$. In view of~\eqref{eq50}, this directly confirms that $Y_{m} > Y_{m-1}$.
\smallskip

\noindent \textbf{Case 2:} $r \neq 2$.

By definition, every path in $\mathcal{B}$ contains the edge $(m, 2m)$. Since $N_{\mathcal{W}_{2m}}(2m) = \{m\}$, any such path must have $2m$ as an end-vertex, forcing $m$ to be the second vertex from one of its ends. This directly contradicts the condition $r \neq 2$, implying that $|\mathcal{B}| = 0$.
\smallskip

Conversely, the set $\mathcal{A}$ is non-empty as it contains the $n$-path
\[
(1, 2, \dots, r-1, m-1, r, r+1, \dots, n-3, m, 2m).
\]
Thus, we obtain $|\mathcal{A}| > |\mathcal{B}|$, which together with \eqref{eq50} leads to $Y_{m-1} > Y_{m}$.
\end{proof}
	
\begin{lemma} \label{lemma18} 
	The following assertions hold:
	\begin{enumerate}
		\item If $\langle n, r \rangle = \langle 5, 3 \rangle$, then $Y_{m} > Y_{m-2}$;
		\item If $r \neq 2$, $\langle n, r \rangle \neq \langle 5, 3 \rangle$, and \eqref{eq13} is satisfied, then $Y_{m-2} > Y_{m}$.
	\end{enumerate}
\end{lemma}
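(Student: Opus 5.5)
We compare the vertices $m-2$ and $m$ using the method of Section~2. Let $\mathcal{E}_{16} = \{(m, 2m-1), (m, 2m)\}$ and $G_{16} = \mathcal{W}_{2m} \setminus \mathcal{E}_{16}$. By Remark~1, in $\mathcal{W}_{2m}$ the vertex $m$ is adjacent to every other vertex, while $N_{\mathcal{W}_{2m}}[m-2] = \{1, 2, \dots, 2m-2\}$. Hence
\[
N_{G_{16}}[m-2] = N_{G_{16}}[m] = \{1, 2, \dots, 2m-2\},
\]
so the two vertices are symmetric in $G_{16}$, and \eqref{eq1} gives $Y_{m-2} - Y_m = |\mathcal{A}| - |\mathcal{B}|$. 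Here $\mathcal{A}$ (respectively $\mathcal{B}$) consists of the $n$-paths rooted at $m-2$ (respectively at $m$) that use at least one edge of $\mathcal{E}_{16}$. The whole proof then reduces to comparing $\mathcal{A}$ and $\mathcal{B}$. The key structural facts are $N_{\mathcal{W}_{2m}}(2m) = \{m\}$ and $N_{\mathcal{W}_{2m}}(2m-1) = \{1, m-1, m\}$.

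\textbf{Case $\langle n,r\rangle=\langle 5,3\rangle$.}
\begin{enumerate}
\item First, $\mathcal{B}$ contains no path using $(m,2m)$. Such a path would have $2m$ as an end-vertex and $m$ in position $2$, but the root $m$ must sit in the centre position $3$.
\item So every $H\in\mathcal{B}$ has the form $(a, 2m-1, m, c, d)$ with $a \in \{1, m-1\}$, where $c\in N(m)$ and $d\in N(c)$ avoid the already used vertices.
\item Every $F\in\mathcal{A}$ has the form $(p, m, m-2, c, d)$ with $p \in \{2m-1, 2m\}$. The only other option is that $m$ occupies position $2$ and $2m-1$ position $1$, which is the same shape.
\item Both families are parametrised by a two-edge walk $(c,d)$ out of a centre vertex. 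I will write $|\mathcal{A}|$ and $|\mathcal{B}|$ as explicit sums over $c$ of degree-type quantities, computed from Remark~1.
\item The claim $|\mathcal{B}| > |\mathcal{A}|$ should follow from $N(m) \supsetneq N(m-2)\cup\{2m-1,2m\}$, since the upper-level vertices adjacent to $m$ have large degree. Concretely, I will inject $\mathcal{A}$ into $\mathcal{B}$ by sending $(p, m, m-2, c, d)$ to $(a, 2m-1, m, c', d')$. The map fixes $a$ by the value of $p$ and relabels $c,d$ through the containment $N(m-2)\subseteq N(m)$. I then exhibit one extra path of $\mathcal{B}$, for instance $(1, 2m-1, m, 2m, \cdot)$ or one with $c=m-1$, that is not in the image.
\end{enumerate}
Finally, $|\mathcal{B}|>|\mathcal{A}|$ gives $Y_m > Y_{m-2}$.

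\textbf{Case $r\neq 2$, $\langle n,r\rangle\neq\langle 5,3\rangle$, with \eqref{eq13}.}
\begin{enumerate}
\item As in Case~2 of Lemma~\ref{lemma17}, no $H\in\mathcal{B}$ uses the edge $(m,2m)$. Hence every $H\in\mathcal{B}$ uses $(m,2m-1)$.
\item Split $\mathcal{B}$ according to whether $2m-1$ has a second neighbour in $H$, which is necessarily in $\{1, m-1\}$.
\item Mirroring the proof of \eqref{eq17} and \eqref{eq27}, I will build an injection $g\colon \mathcal{B}_1\to\mathcal{A}_1$ on the bulk subfamily $\mathcal{B}_1$. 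The map swaps the roles of $m$ and $m-2$ and reroutes the segment through $2m-1$ onto $2m$ or $2m-2$, using that the lower vertices near $2m$ are adjacent to $m-2$.
\item The residual subfamily $\mathcal{B}_2$ will be chosen so that its paths contain four forced vertices, for example $m$, $2m-1$, some $w\in\{1,m-1\}$, and $m-2$ or another fixed vertex. Proposition~\ref{p2} then gives $|\mathcal{B}_2|\le c\, n!\binom{2m-4}{n-4}$ with $c\le 5$.
\item On the $\mathcal{A}$ side, I produce $\frac m4\binom{m/4}{n-4}$ further paths outside the image of $g$. They are modelled on \eqref{eq21} and use the sets $U,V$ from \eqref{eq15}: take paths $(u_1,\dots,u_{r-2}, 2m, m, m-2, v, u_{r-1},\dots)$ rooted at $m-2$, suitably re-indexed for $r=1$ or $r=3$.
\end{enumerate}
Then \eqref{eq13} yields $|\mathcal{A}|>|\mathcal{B}|$, that is, $Y_{m-2} > Y_m$.

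The main obstacle is the second case, for two reasons. First, the root positions $r=1$ and $r\ge 3$ must be handled uniformly. Second, for $r=1$ the naive forced set $\{m, 2m-1, w\}$ has only three vertices and gives a bound with $\binom{2m-3}{n-3}$, which is too weak against \eqref{eq13}. I expect to overcome this by absorbing all paths $(m, 2m-1, w, \dots)$ avoiding a small set into the injection $g$, via $(m,2m-1,w,\dots)\mapsto(m-2,2m-2,w',\dots)$. Only the exceptional paths that meet a fixed fourth vertex are then left for the Proposition~\ref{p2} bound. I also need to check that the exclusion $\langle 5,3\rangle$ is exactly where the construction of the extra $\mathcal{A}$ paths breaks down, since there $r-2=1$ and the path has no room beyond $v$.
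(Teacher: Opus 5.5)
Your reduction via $G_{16}$ is right. The skeleton of your second case (an injection on the bulk of $\mathcal{B}$, Proposition~\ref{p2} on the paths through $m-2$, and $\frac m4\binom{m/4}{n-4}$ extra paths built from $U,V$, closed off by \eqref{eq13}) is the paper's. However, each load-bearing step there is missing or fails.

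(a) Your extra paths $(u_1,\dots,u_{r-2},2m,m,m-2,v,u_{r-1},\dots)$ do not exist. Since $N_{\mathcal{W}_{2m}}(2m)=\{m\}$, the vertex $2m$ must be an end-vertex. Even $(2m,m,m-2,v,\dots)$ puts $m-2$ in position $3$, which is a root slot only for $r=3$. The template of \eqref{eq21}, where $2m-2$ sits between $m$ and $m-2$, does not transfer.

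(b) The injection is never defined. Your sketch $(m,2m-1,w,\dots)\mapsto(m-2,2m-2,w',\dots)$ yields paths that need not contain $(m,2m-1)$ or $(m,2m)$, so they need not lie in $\mathcal{A}$. Since $r\neq2$, $2m-1$ is never an end of $H$, so you can write $H=(x_1,\dots,x_a,m,2m-1,w,y_1,\dots,y_{n-a-3})$ with $w\in\{1,m-1\}$. The map that works deletes the block $(m,2m-1,w)$, puts $m-2$ in the root slot, and appends $(m,2m)$ to the tail if $w=m-1$, or $(m,2m-1)$ if $w=1$. This is legal because every vertex outside $\{m-2,2m-1,2m\}$ is adjacent to $m-2$, and every vertex other than $m$ is adjacent to $m$. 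It handles $r=1$ (where $a=0$) like any other root, so your ``$r=1$ obstacle'' is illusory. Only the paths through $m-2$ are left over, and they contain the four vertices $m-2,m,2m-1,w$.

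(c) Disjointness must be engineered, not asserted. The two branches need different endings; otherwise $(x,m,2m-1,1,y)$ and $(x,m,2m-1,m-1,y)$ collide. The extra paths must also avoid the image. The paper takes $(u_1,\dots,u_{r-1},m-2,v,u_r,\dots,u_{n-4},m,2m-1)$ and separates them from the $w=1$ branch by the neighbour of $m-2$ on the subpath towards $m$. In that image this neighbour is $m$ or some $y_1\in N(1)\setminus\{2m-1\}=\{2,\dots,m+1\}$, hence outside $V$. Extra paths ending in $(m,2m)$ would instead coincide with images of $(x,m,2m-1,m-1,v,\dots)$, because $V\subseteq N(m-1)$. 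The requirement $r-1\le n-4$ on these paths is exactly what fails at $\langle 5,3\rangle$.

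In the first case an explicit count does suffice; the paper gets $|\mathcal{B}|-|\mathcal{A}|=2m-8$. But the reason you give is false. We have $N(m)=V(\mathcal{W}_{2m})\setminus\{m\}$ while $N(m-2)\cup\{2m-1,2m\}=V(\mathcal{W}_{2m})\setminus\{m-2\}$, so neither contains the other. Your spare path $(1,2m-1,m,2m,\cdot)$ cannot exist either, since $2m$ is a leaf.

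For an injection that fixes $a$ by $p$, the pairing is forced. The paths of $\mathcal{A}$ with $p=2m$ number $2m^2-8m+10$, while those of $\mathcal{B}$ with $a=m-1$ number only $2m^2-8m+8$, so $p=2m\mapsto a=m-1$ cannot be injective. What works is $p=2m-1\mapsto a=m-1$, a bijection obtained by transposing $m-2$ and $m-1$ (they have equal closed neighbourhoods in $\mathcal{W}_{2m}-\{2m-1\}$). Pair this with $p=2m\mapsto a=1$, where $\mathcal{B}$ is larger by exactly $2m-8$. With a margin this thin against families of size about $2m^2$, that last step needs an actual count rather than an unspecified relabelling.
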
	
\begin{proof}	
The vertices $m-2$ and $m$ are symmetric in $G_{16} = \mathcal{W}_{2m} \setminus \{(m, 2m-1), (m, 2m)\}$, as $N_{G_{16}}[m-2] = N_{G_{16}}[m] = \{1, 2, \dots, 2m-2\}$. Consequently, \eqref{eq1} yields
\begin{align} \label{eq53}
	Y_{m-2} - Y_{m} &= |\mathcal{A}| - |\mathcal{B}|, \\
	\mathcal{A} &= \{F \in \mathcal{P}_n(\mathcal{W}_{2m}, m-2, r) \mid E(F) \cap \{(m, 2m-1), (m, 2m)\} \neq \emptyset \}, \notag \\
	\mathcal{B} &= \{H \in \mathcal{P}_n(\mathcal{W}_{2m}, m, r) \mid E(H) \cap \{(m, 2m-1), (m, 2m)\} \neq \emptyset \}. \notag
\end{align} 

Further, two cases are examined in accordance with the assertions of Lemma~\ref{lemma18}.
\bigskip	
	
\noindent \textbf{Case 1.} $\langle n, r \rangle = \langle 5, 3 \rangle$.

The set $\mathcal{A}$ admits a partition into four disjoint classes:
\begin{flalign*}
	&\qquad \mathcal{A}_1 = \{ (x, y, m-2, m, z) \mid x \in \{1, 2, \dots, m-3\}, \\
	&\qquad \qquad \quad y \in \{1, 2, \dots, m+x\} \setminus \{x, m-2, m\}, \, z \in \{2m, 2m-1\} \}, \\[1ex]	
	&\qquad \qquad \qquad \qquad \qquad \quad |\mathcal{A}_1| = 2\sum_{x=1}^{m-3} (m+x-3) = (m-3)(3m-8); \\[1.5ex]
	&\qquad \mathcal{A}_2 = \{ (m-1, y, m-2, m, z) \mid y \in \{1, 2, \dots, 2m-2\} \setminus \{m-2, m-1, m\}, \\
	&\qquad \qquad \quad z \in \{2m, 2m-1\} \}, \\[1ex]
	&\qquad \qquad \qquad \qquad \qquad \quad |\mathcal{A}_2| = 2(2m-5); \\[1.5ex]	
	&\qquad \mathcal{A}_3 = \{ (x, y, m-2, m, z) \mid x \in \{m+1, m+2, \dots, 2m-2\}, \\
	&\qquad \qquad \quad y \in \{x-m, x-m+1, \dots, m-1\} \setminus \{m-2\}, \, z \in \{2m, 2m-1\} \}, \\	
	&\qquad \qquad \qquad \qquad \qquad \quad |\mathcal{A}_3| = 2 \sum_{x=m+1}^{2m-2} (2m-x-1) = (m-2)(m-1); \\[1.5ex]	
	&\qquad \mathcal{A}_4 = \{ (2m-1, 1, m-2, m, 2m), \, (2m-1, m-1, m-2, m, 2m) \}, \\	
	&\qquad \qquad \qquad \qquad \qquad \quad |\mathcal{A}_4| = 2.
\end{flalign*}

In turn, we subdivide the family $\mathcal{B}$ into six disjoint classes:
{\allowdisplaybreaks
\begin{flalign*}
	&\quad \mathcal{B}_1 = \{ (x, y, m, 2m-1, z) \mid x \in \{2, 3, \dots, m-2\}, \\
	&\qquad \qquad y \in \{1, 2, \dots, m+x\} \setminus \{x, z, m\}, \, z \in \{1, m-1\} \}, \\[1ex]	
	&\qquad \qquad \qquad \qquad \qquad |\mathcal{B}_1| = 2\sum_{x=2}^{m-2} (m+x-3) = (m-3)(3m-6); \\[1.5ex]
	&\quad \mathcal{B}_2 = \{ (1, y, m, 2m-1, m-1) \mid y \in \{2, 3, \dots, m+1\} \setminus \{m-1, m\} \}, \\[1ex]
	&\qquad \qquad \qquad \qquad \qquad |\mathcal{B}_2| = m-2; \\[1.5ex]	
	&\quad \mathcal{B}_3 = \{ (m-1, y, m, 2m-1, 1) \mid y \in \{2, 3, \dots, 2m-2\} \setminus \{m-1, m\} \}, \\[1ex]
	&\qquad \qquad \qquad \qquad \qquad |\mathcal{B}_3| = 2m-5; \\[1.5ex]	
	&\quad \mathcal{B}_4 = \{ (m+1, y, m, 2m-1, 1) \mid y \in \{2, 3, \dots, m-1\} \}, \\[1ex]
	&\qquad \qquad \qquad \qquad \qquad |\mathcal{B}_4| = m-2; \\[1.5ex]	
	&\quad \mathcal{B}_5 = \{ (x, y, m, 2m-1, 1) \mid x \in \{m+2, m+3, \dots, 2m-2\}, \\
	&\qquad \qquad y \in \{x-m, x-m+1, \dots, m-1\} \}, \\[1ex]
	&\qquad \qquad \qquad \qquad \qquad |\mathcal{B}_5| = \sum_{x=m+2}^{2m-2} (2m-x) = \frac{1}{2} m(m-3); \\[1.5ex]	
	&\quad \mathcal{B}_6 = \{ (x, y, m, 2m-1, m-1) \mid x \in \{m+1, m+2, \dots, 2m-2\}, \\
	&\qquad \qquad y \in \{x-m, x-m+1, \dots, m-2\} \}, \\[1ex]
	&\qquad \qquad \qquad \qquad \qquad |\mathcal{B}_6| = \sum_{x=m+1}^{2m-2} (2m-x-1) = \frac{1}{2} (m-2)(m-1).
\end{flalign*}
}

These partitions yield the cardinalities of $\mathcal{A}$ and $\mathcal{B}$:
\[
|\mathcal{A}| = (m-3)(3m-8) + 2(2m-5) + (m-2)(m-1) + 2 = 4m^2 - 16m + 18
\]
and
\[
\begin{aligned}	
	|\mathcal{B}| &= (m-3)(3m-6) + (m-2) + (2m-5) + (m-2) \\
	&\quad + \frac{1}{2}m(m-3) + \frac{1}{2}(m-2)(m-1) = 4m^2 - 14m + 10.
\end{aligned}
\]
Whence, we obtain
\[ |\mathcal{B}| - |\mathcal{A}| = (4m^2 - 14m + 10) - (4m^2 - 16m + 18) = 2m - 8>0 \quad \text{for } m \ge 6. \]
By \eqref{eq53}, this strict inequality guarantees that $Y_{m} > Y_{m-2}$.
		
\smallskip	
\noindent \textbf{Case 2.} $r \neq 2$, $\langle n, r \rangle \neq \langle 5, 3 \rangle$, and \eqref{eq13} holds. 
\smallskip

In this case, $m$ is divisible by $4$ and  $m/4 \geq n-4$. 

Let $U$ and $V$ be the vertex sets defined in \eqref{eq15}.	

For each path $F \in \mathcal{A}$ containing distinct vertices $u$ and $v$, 
let $f(F, u, v)$ denote the neighbor of $u$ on the subpath of $F$ connecting $u$ and $v$.

Since $N_{\mathcal{W}_{2m}}(2m)=\{m\}$, the edge $(m, 2m)$ belongs to a path 
from $\mathcal{B}$ only if $r=2$. Thus, for $r \neq 2$, any path $H \in \mathcal{B}$ 
avoids the vertex $2m$ and must include the edge $(m, 2m-1)$, which implies that 
$2m-1$ is not an end-vertex of $H$ (as $r \neq 2$). Consequently, in view of 
$N_{\mathcal{W}_{2m}}(2m-1)=\{1, m-1, m\}$, it follows that $H$ contains exactly 
one of the two subpaths: $(m, 2m-1, 1)$ or $(m, 2m-1, m-1)$.	
\smallskip
		
We decompose each of the families $\mathcal{A}$ and $\mathcal{B}$ into a disjoint union of three subfamilies:
	\begin{align} \label{eq54}
		\mathcal{A} &= \mathcal{A}_1 \sqcup \mathcal{A}_2 \sqcup \mathcal{A}_3, \quad \mathcal{B} = \mathcal{B}_1 \sqcup \mathcal{B}_2 \sqcup \mathcal{B}_3, \\
		\mathcal{A}_1 &= \{ F \in \mathcal{A} \mid 2m \in V(F) \}, \notag \\
		\mathcal{A}_2 &= \{ F \in \mathcal{A} \mid 2m \notin V(F), \ f(F, m-2, m) \notin V \}, \notag \\
		\mathcal{A}_3 &= \{ F \in \mathcal{A} \mid 2m \notin V(F), \ f(F, m-2, m) \in V \}, \notag \\
		\mathcal{B}_1 &= \{ H \in \mathcal{B} \mid m-2 \notin V(H), \ (m, 2m-1, m-1) \subset H \}, \notag \\
		\mathcal{B}_2 &= \{ H \in \mathcal{B} \mid m-2 \notin V(H), \ (m, 2m-1, 1) \subset H \}, \notag \\
		\mathcal{B}_3 &= \{ H \in \mathcal{B} \mid m-2 \in V(H) \}. \notag
	\end{align}
				
Next, we derive the following estimates for the subfamilies of $\mathcal{A}$ and $\mathcal{B}$:
\begin{align} 		
	|\mathcal{A}_1| &\geq |\mathcal{B}_1|, \label{eq55} \\
	|\mathcal{A}_2| &\geq |\mathcal{B}_2|, \label{eq56} \\
	|\mathcal{A}_3| &\geq \frac{m}{4} \binom{m/4}{n-4}, \label{eq57} \\
	|\mathcal{B}_3| &\leq 2n! \binom{2m-4}{n-4}. \label{eq58}
\end{align}

To establish \eqref{eq55}, we construct an injective mapping $g_7 \colon \mathcal{B}_1 \to \mathcal{A}_1$. 
Every $n$-path $H \in \mathcal{B}_1$ admits a representation
\[
H = (x_1, x_2, \dots, x_a, m, 2m-1, m-1, y_1, y_2, \dots, y_{n-a-3}),
\]
where $a \in \{r-1, n-r\}$, all vertices $x_p$ and $y_q$ present in $H$ 
are distinct and belong to $V(\mathcal{W}_{2m}) \setminus \{m-2, m-1, m, 2m-1, 2m\}$. 
The image $g_7(H)$ is defined as
\[
g_7(H) = (x_1, x_2, \dots, x_a, m-2, y_1, y_2, \dots, y_{n-a-3}, m, 2m).
\]

Similarly, to verify \eqref{eq56}, we introduce an injective mapping $g_{8} \colon \mathcal{B}_2 \to \mathcal{A}_2$. 
Any $n$-path $H \in \mathcal{B}_2$ can be represented as
\[
H = (x_1, x_2, \dots, x_a, m, 2m-1, 1, y_1, y_2, \dots, y_{n-a-3}),
\]
where $a \in \{r-1, n-r\}$, the remaining vertices  $x_p$ and $y_q$ (if they exist in $H$) are distinct 
and belong to the set $V(\mathcal{W}_{2m}) \setminus \{1, m-2, m, 2m-1, 2m\}$. 
We define the image $F = g_{8}(H)$ as follows:
\[
F = (x_1, x_2, \dots, x_a, m-2, y_1, y_2, \dots, y_{n-a-3}, m, 2m-1).
\]
It is easy to see that $F \in \mathcal{A}$ and $2m \notin V(F)$. Furthermore, observe that $f(F, m-2, m)$ 
is either $m$ or $y_1$. In the latter case, since $y_1 \neq 2m-1$, the existence 
of the edge $(1, y_1) \in E(H)$ implies that $y_1 \in \{2, 3,\dots, m+1\}$. 
In both cases, $f(F, m-2, m) \notin V$.  Combining these facts, we conclude that $F \in \mathcal{A}_2$.
\smallskip

Next, we obtain a lower bound for $|\mathcal{A}_3|$. For each $v \in V$, with any choice 
of $n-4$ vertices $u_1 < u_2 < \dots < u_{n-4}$ from $U$ (where $|U|=|V|=m/4$), 
we can associate the $n$-path
\[
(u_1, u_2, \dots, u_{r-1}, m-2, v, u_r, u_{r+1}, \dots, u_{n-4}, m, 2m-1) \in \mathcal{A}_3.
\]
This path exists if $n-4 \geq r-1$. For $n \geq 6$ and $r \leq \lceil n/2 \rceil$, 
this condition holds since $n-r \geq \lfloor n/2 \rfloor \geq 3$. For $n \in \{4, 5\}$, 
the only exception $\langle n, r \rangle = \langle 5, 3 \rangle$ does not fall 
under Case 2. Since the total number of such selections is $\frac{m}{4} \binom{m/4}{n-4}$ 
and distinct selections produce distinct paths, we immediately arrive at \eqref{eq57}.
		
\smallskip	
The upper bound for $|\mathcal{B}_3|$ in \eqref{eq58} follows from Proposition~\ref{p2} 
and the fact that every $H \in \mathcal{B}_3$ contains the vertices $m-2$, $m$, $2m-1$, 
and $v$ for some $v \in \{1, m-1\}$.
\smallskip

Finally, the required inequality $Y_{m-2} > Y_{m}$  follows from \eqref{eq53} 
and the relation $|\mathcal{A}| > |\mathcal{B}|$, which in turn is derived from \eqref{eq13} 
and \eqref{eq54}--\eqref{eq58} through the following chain of inequalities:
\begin{align*}		
	|\mathcal{A}| = |\mathcal{A}_1| + |\mathcal{A}_2| + |\mathcal{A}_3| &\geq |\mathcal{B}_1| + |\mathcal{B}_2| + \frac{m}{4} \binom{m/4}{n-4} \\
	&> |\mathcal{B}_1| + |\mathcal{B}_2| + 5n! \binom{2m-4}{n-4} > |\mathcal{B}_1| + |\mathcal{B}_2| + |\mathcal{B}_3| = |\mathcal{B}|. \qedhere
\end{align*}
	\end{proof}
	
\begin{lemma} \label{lemma19}
	If $r \neq 2$ and \eqref{eq24} holds, then $Y_m > Y_{m-3}$.		
\end{lemma}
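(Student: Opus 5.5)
We plan to use the symmetry method of Section~2 with three deleted edges. Let $\mathcal{E} = \{(m, 2m-2), (m, 2m-1), (m, 2m)\}$ and $G = \mathcal{W}_{2m} \setminus \mathcal{E}$. In $\mathcal{W}_{2m}$ we have $N[m-3] = \{1, \dots, 2m-3\}$ and $N[m] = \{1, \dots, 2m\}$. Hence in $G$ the two closed neighborhoods coincide, both being $\{1, 2, \dots, 2m-3\}$. By \eqref{eq1}, $Y_m - Y_{m-3} = |\mathcal{A}| - |\mathcal{B}|$, where $\mathcal{A}$ (resp.\ $\mathcal{B}$) is the set of $n$-paths rooted at $m$ (resp.\ $m-3$) that use an edge of $\mathcal{E}$. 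We aim to show $|\mathcal{A}| > |\mathcal{B}|$ by the same scheme as in Lemma~\ref{lemma15}: an injection plus a binomial surplus, against a Proposition~\ref{p2} error term.

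Let $Q = \{1, m-2, m-1\}$. Split $\mathcal{B} = \mathcal{B}_1 \sqcup \mathcal{B}_2$, where $\mathcal{B}_1$ consists of the paths with $V(H) \cap Q = \emptyset$. Take $H \in \mathcal{B}_1$ and let $z \in \{2m-2, 2m-1, 2m\}$ be the other end of the $\mathcal{E}$-edge at $m$. Since $N(2m-2) = \{m-2, m-1, m\}$, $N(2m-1) = \{1, m-1, m\}$ and $N(2m) = \{m\}$, the vertex $z$ must be an end-vertex of $H$. Also $H$ uses only one $\mathcal{E}$-edge. So
\[
H = (x_1, \dots, x_a, m-3, y_1, \dots, y_b, m, z), \qquad a \in \{r-1, n-r\},
\]
with all $x_p, y_q \notin Q \cup \{m-3, m, 2m-2, 2m-1, 2m\}$. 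Define $g(H)$ by replacing the block starting at $m-3$ as follows:
\[
g(H) = (x_1, \dots, x_a, m, z, t, y_1, \dots, y_b),
\]
with the pair $\langle z, t\rangle$ chosen as follows: if $z = 2m-2$ then $t = m-1$; if $z = 2m-1$ then $t = m-1$; if $z = 2m$, use $(m, 2m-2, m-2)$ instead of $(m, z, t)$. We need to check three things.
\begin{enumerate}
\item $g(H)$ is a path. Here $x_a$ was adjacent to $m-3$, so it lies in $\{1, \dots, 2m-3\}$ and is adjacent to $m$. Likewise $y_1$ lies in $\{1, \dots, 2m-3\}$, so it is adjacent to both $m-1$ and $m-2$ by Remark~1.
\item The root is preserved: $m$ sits at position $a+1$, as $m-3$ did in $H$. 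Since $r \neq 2$, the path $g(H)$ is genuinely a path of $\mathcal{A}$; note $g(H)$ never uses $(m, 2m)$ with $m$ as root.
\item $g$ is injective. The third vertex after the $x$-block recovers $z$, namely $(2m-2, m-1)$, $(2m-1, m-1)$ or $(2m-2, m-2)$. The remaining data are copied unchanged.
\end{enumerate}
All images avoid the vertex $1$.

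Next we find a surplus family inside $\mathcal{A}$ disjoint from $g(\mathcal{B}_1)$. For each choice $u_1 < \dots < u_{n-3}$ in $\{2, 3, \dots, m-3\}$, form the $n$-path
\[
(u_1, \dots, u_{r-1}, m, 2m-1, 1, u_r, \dots, u_{n-3}) \in \mathcal{A}.
\]
These paths contain the vertex $1$, so they are not images of $g$. This gives at least $\binom{m-4}{n-3}$ further paths, so $|\mathcal{A}| \ge |\mathcal{B}_1| + \binom{m-4}{n-3}$. The case $r = 1$ is included here, with an empty initial block.

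Finally, every $H \in \mathcal{B}_2$ contains $m-3$, $m$, one vertex of $\{2m-2, 2m-1, 2m\}$, and one vertex of $Q$. Proposition~\ref{p2} with $s = 4$ therefore gives $|\mathcal{B}_2| \le 9\, n! \binom{2m-4}{n-4}$. Condition \eqref{eq24} then yields $|\mathcal{A}| > |\mathcal{B}|$, and hence $Y_m > Y_{m-3}$.

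The main obstacle is the well-definedness of $g$. We must check the adjacencies $x_a \sim m$ and $y_1 \sim t$ carefully, together with the subcase $z = 2m$ and the degenerate case $b = 0$. If $y_1$ happens to equal $2m-2$ or $2m-1$, it would have to be excluded, which the choice of $Q$ and the end-vertex argument should already guarantee.
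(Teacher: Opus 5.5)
Your proof is correct and follows essentially the same route as the paper. It uses the same three deleted edges and the same set $Q=\{1,m-2,m-1\}$. It injects the $Q$-avoiding part of $\mathcal{B}$ into $\mathcal{A}$; your single map merges the paper's three maps $f_1,f_2,f_3$, with a different but equally valid choice of target triples. It then adds the same surplus family through $(m,2m-1,1)$ and bounds the rest by the same $9n!\binom{2m-4}{n-4}$. Separating the images from the surplus family by the presence of vertex $1$ is a tidy replacement for the paper's partition by $\mathcal{P}_3(F)\cap\mathcal{S}$. The remark in your step~2 invoking $r\neq 2$ is unnecessary, since membership of $g(H)$ in $\mathcal{A}$ does not depend on it.
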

\begin{proof}
	Let $\mathcal{E}_{17} = \{(m, 2m-2), (m, 2m-1), (m, 2m)\}$. The vertices $m-3$ and $m$ are symmetric in $G_{17} = \mathcal{W}_{2m} \setminus \mathcal{E}_{17}$ since they share the same closed neighborhood: 
	\[
	N_{G_{17}}[m-3] = N_{G_{17}}[m] = \{1, 2, \dots, 2m-2\}=\{1, 2, \dots, 2m-3\}.\] 
Then the relation \eqref{eq1} yields 
	\begin{align} \label{eq59}
		Y_{m} - Y_{m-3} &= |\mathcal{A}| - |\mathcal{B}|, \\
		\mathcal{A} &= \{F \in \mathcal{P}_n(\mathcal W_{2m}, m, r) \mid E(F) \cap \mathcal{E}_{17} \neq \emptyset \}, \notag \\
		\mathcal{B} &= \{H \in \mathcal{P}_n(\mathcal W_{2m}, m-3, r) \mid E(H) \cap \mathcal{E}_{17} \neq \emptyset \}. \notag
	\end{align}

For further analysis of $\mathcal{A}$ and $\mathcal{B}$, we need the following notation.

Let $Q = \{1, m-2, m-1\}$ and let $\mathcal{P}_3(F)$ denote the set of all subpaths of order 3 of a path $F \in \mathcal{A}$. 
The set $\mathcal{S}$ is defined by
\[
\mathcal{S} = \{ (m, 2m-1, 1), (m, 2m-1, m-1), (m, 2m-2, m-2), (m, 2m-2, m-1) \}.
\] 

Using this notation, the cardinalities of $\mathcal{A}$ and $\mathcal{B}$ can be expressed as follows:
\begin{align} \label{eq60}
	\mathcal{A} &= \mathcal{A}_1 \sqcup \mathcal{A}_2 \sqcup \mathcal{A}_3 \sqcup \mathcal{A}_4 \sqcup \mathcal{A}_5, \quad \mathcal{B} = \mathcal{B}_1 \cup \mathcal{B}_2 \cup \mathcal{B}_3 \cup \mathcal{B}_4, \\
	\mathcal{A}_1 &= \{F \in \mathcal{A} \mid \mathcal{P}_3(F) \cap \mathcal{S} = \{ (m, 2m-1, m-1) \} \}, \notag \\
	\mathcal{A}_2 &= \{F \in \mathcal{A} \mid \mathcal{P}_3(F) \cap \mathcal{S} = \{ (m, 2m-2, m-1) \} \}, \notag \\
	\mathcal{A}_3 &= \{F \in \mathcal{A} \mid \mathcal{P}_3(F) \cap \mathcal{S} = \{ (m, 2m-2, m-2) \} \}, \notag \\
	\mathcal{A}_4 &= \{F \in \mathcal{A} \mid \mathcal{P}_3(F) \cap \mathcal{S} = \{ (m, 2m-1, 1) \} \}, \notag \\
	\mathcal{A}_5 &= \mathcal{A} \setminus (\mathcal{A}_1 \cup \mathcal{A}_2 \cup \mathcal{A}_3 \cup \mathcal{A}_4), \notag \\
	\mathcal{B}_1 &= \{H \in \mathcal{B} \mid (m, 2m) \in E(H), \ V(H) \cap Q = \emptyset\}, \notag \\
	\mathcal{B}_2 &= \{H \in \mathcal{B} \mid (m, 2m-1) \in E(H), \ V(H) \cap Q = \emptyset\}, \notag \\
	\mathcal{B}_3 &= \{H \in \mathcal{B} \mid (m, 2m-2) \in E(H), \ V(H) \cap Q = \emptyset\}, \notag \\
	\mathcal{B}_4 &= \{H \in \mathcal{B} \mid V(H) \cap Q \neq \emptyset\}. \notag 
\end{align}

Next, we establish the inequalities below:
	\begin{align}		
		|\mathcal{A}_1| &\geq |\mathcal{B}_1|, \label{eq61} \\
		|\mathcal{A}_2| &\geq |\mathcal{B}_2|, \label{eq62} \\
		|\mathcal{A}_3| &\geq |\mathcal{B}_3|, \label{eq63} \\
		|\mathcal{A}_4| &\geq \binom{m-4}{n-3}, \label{eq64} \\
		|\mathcal{B}_4| &\leq 9n! \binom{2m-4}{n-4}. \label{eq65}
	\end{align}	
	
To establish \eqref{eq61}--\eqref{eq63}, for each $j \in \{1, 2, 3\}$, we construct an injective mapping $f_{j} \colon \mathcal{B}_j \to \mathcal{A}_j$. Here, we set $T = \{z_1, z_2, z_3\}$, where $z_1 = 2m$, $z_2 = 2m-1$, and $z_3 = 2m-2$. Let $H \in \mathcal{B}_j$. Then $V(H) \cap T = \{z_j\}$, and $z_j$ is an end-vertex of $H$. This follows from the conditions $m-3 \in V(H)$, $(m, z_j) \in E(H)$, and $V(H) \cap Q = \emptyset$, together with the neighborhood structures $N_{\mathcal{W}_{2m}}(2m-2)=\{m-2, m-1, m\}$, $N_{\mathcal{W}_{2m}}(2m-1)=\{1, m-1, m\}$, and $N_{\mathcal{W}_{2m}}(2m)=\{m\}$. Let 
\[
V_3 = V(\mathcal{W}_{2m}) \setminus (Q \cup T \cup \{m, m-3\}).
\]
Given this, the path $H$ can be written in the form
\[
H = (x_1, x_2, \dots, x_a, m-3, y_1, y_2, \dots, y_{n-a-3}, m, z_j),
\]
where $a \in \{r-1, n-r\}$, and the vertices $x_p$ and $y_q$ appearing in $H$ are distinct elements of $V_3$. The images $f_{j}(H)$ are defined respectively as follows:
\begin{align*}
	f_{1}(H) &= (x_1, x_2, \dots, x_a, m, 2m-1, m-1, y_1, y_2, \dots, y_{n-a-3} ), \\
	f_{2}(H) &= (x_1, x_2, \dots, x_a, m, 2m-2, m-1, y_1, y_2, \dots, y_{n-a-3}), \\
	f_{3}(H) &= (x_1, x_2, \dots, x_a, m, 2m-2, m-2, y_1, y_2, \dots, y_{n-a-3}).
\end{align*}
	
To prove \eqref{eq64}, for any choice of $n-3$ vertices $u_1 < u_2 < \dots < u_{n-3}$ from the set $\{2, 3, \dots, m-3\}$, we can consider the $n$-path
\[
(u_1, u_2, \dots, u_{r-1}, m, 2m-1, 1, u_r, u_{r+1}, \dots, u_{n-3}) \in \mathcal{A}_4.
\]
Since the number of such selections is exactly $\binom{m-4}{n-3}$ and distinct vertex sets yield distinct \mbox{$n$-paths}, this establishes the desired inequality.

Furthermore, estimate~\eqref{eq65} follows from Proposition~\ref{p2} and the fact that 
every $n$\nobreakdash-path in $\mathcal{B}_4$ contains the vertices $m-3$, $m$, $w_1$, and $w_2$, 
where $w_1 \in \{1, m-2, m-1\}$ and $w_2 \in \{2m-2, 2m-1, 2m\}$.	
		
Combining \eqref{eq24} and \eqref{eq60}--\eqref{eq65}, we obtain:
\begin{align*}		
	|\mathcal{A}| \geq \sum_{i=1}^4 |\mathcal{A}_i| &\geq |\mathcal{B}_1| + |\mathcal{B}_2| + |\mathcal{B}_3| + \binom{m-4}{n-3} \\
	&> |\mathcal{B}_1| + |\mathcal{B}_2| + |\mathcal{B}_3| + 9n! \binom{2m-4}{n-4} \geq \sum_{i=1}^4 |\mathcal{B}_i|=|\mathcal{B}|.
\end{align*}	 	
By \eqref{eq59}, the relation $|\mathcal{A}| > |\mathcal{B}|$ immediately implies that $Y_{m} > Y_{m-3}$.		
\end{proof}
		
\begin{lemma} \label{lemma20}   
	In the graph $\mathcal{W}_{2m}$, the $P_n$-degrees of the vertices $m-1$ and $m$ satisfy the inequality
	\begin{equation*}
		P_n\text{-}\deg_{\mathcal{W}_{2m}}(m) > P_n\text{-}\deg_{\mathcal{W}_{2m}}(m-1).
	\end{equation*}
\end{lemma}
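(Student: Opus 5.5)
The plan is to reuse the symmetry from the proof of Lemma~\ref{lemma17}, now for ordinary $P_n$-degrees. Put $G_{15} = \mathcal{W}_{2m} \setminus \{(m, 2m)\}$. There $N_{G_{15}}[m-1] = N_{G_{15}}[m] = \{1, 2, \dots, 2m-1\}$, so some automorphism of $G_{15}$ swaps $m-1$ and $m$. Hence the two vertices have equal $P_n$-degrees in $G_{15}$.

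Every $n$-path of $\mathcal{W}_{2m}$ that is not a subgraph of $G_{15}$ contains the edge $(m,2m)$. So, exactly as in \eqref{eq1},
\[
P_n\text{-}\deg_{\mathcal{W}_{2m}}(m) - P_n\text{-}\deg_{\mathcal{W}_{2m}}(m-1) = |\mathcal{C}| - |\mathcal{D}|.
\]
Here $\mathcal{C}$ is the set of $n$-paths of $\mathcal{W}_{2m}$ that contain the edge $(m,2m)$ and pass through $m$. Likewise, $\mathcal{D}$ is the set of $n$-paths that contain $(m,2m)$ and pass through $m-1$.

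The key observation is that no counting estimates are needed here, unlike Lemmas~\ref{lemma9}--\ref{lemma19}. Every path containing the edge $(m,2m)$ automatically contains its end-vertex $m$. Therefore $\mathcal{C}$ is the set of all $n$-paths through the edge $(m,2m)$, and $\mathcal{D}$ is the subfamily of those that also visit $m-1$. So $\mathcal{D} \subseteq \mathcal{C}$, and it suffices to exhibit one path in $\mathcal{C} \setminus \mathcal{D}$.

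For this I take the $n$-path $(2m, m, 1, 2, \dots, n-2)$. It exists by Remark~1: $2m$ is adjacent to $m$, and $V_1$ is a clique. It avoids $m-1$ because $n-2 < m-1$, which follows from $m > n$. This gives $|\mathcal{C}| > |\mathcal{D}|$ and hence the lemma.

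The only point requiring care is the validity of the symmetry reduction, namely that the closed neighbourhoods really coincide after deleting the single edge $(m,2m)$. This was already verified in the proof of Lemma~\ref{lemma17}, so I expect no real obstacle.
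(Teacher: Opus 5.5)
Your proposal is correct and follows the paper's proof essentially verbatim. It uses the same symmetry of $m-1$ and $m$ in $\mathcal{W}_{2m} \setminus \{(m,2m)\}$, the same inclusion $\mathcal{D} \subseteq \mathcal{C}$, and the same witness path $(2m, m, 1, 2, \dots, n-2)$; your explicit check that this path avoids $m-1$ because $m > n$ fills in a detail the paper leaves implicit.
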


\begin{proof}
	As in the proof of Lemma~\ref{lemma17}, removing the edge $(m, 2m)$ from $\mathcal{W}_{2m}$ yields a graph in which the vertices $m-1$ and $m$ are symmetric. Since their $P_n$-degrees in this modified graph are identical, for the original graph $\mathcal{W}_{2m}$ we have
	\[
	P_n\text{-}\deg_{\mathcal{W}_{2m}}(m) - P_n\text{-}\deg_{\mathcal{W}_{2m}}(m-1) = |\mathcal{C}| - |\mathcal{D}|,
	\]
	where $\mathcal{C}$ and $\mathcal{D}$ are the families of $n$-paths in $\mathcal{W}_{2m}$ containing the edge $(m, 2m)$, such that paths in $\mathcal{D}$ additionally pass through the vertex $m-1$. Clearly, $\mathcal{D} \subseteq \mathcal{C}$. On the other hand, the family $\mathcal{C}$ contains at least one $n$-path outside $\mathcal{D}$, namely
	\[
	(2m, m, 1, 2, \dots, n-2).
	\]
	Therefore, $|\mathcal{C}| > |\mathcal{D}|$, which ensures the desired conclusion.
\end{proof}

\subsection{$k$-condition}

\begin{definition}
	Let $k \geq 4$. A parameter $m$ satisfies the \textit{$k$-condition} if 
	\begin{equation} \label{eq66} 
		\frac{m}{4} \binom{m/4}{k-3} > 3k! \binom{2m-3}{k-3}.
	\end{equation}
\end{definition}

\begin{lemma} \label{lemma21}
	Let $k \geq 4$. There exists a real number $m_0$ such that every $m > m_0$ divisible by $4$ satisfies the $k$-condition.
\end{lemma}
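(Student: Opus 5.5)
Both sides of \eqref{eq66} are polynomials in $m$ of the same degree $k-2$ once $k$ is fixed, so I will compare leading coefficients. Writing $m = 4t$ with $t$ a positive integer, the left-hand side is $t\binom{t}{k-3}$, a polynomial in $t$ of degree $k-2$ with leading coefficient $1/(k-3)!$. In terms of $m$ this is $m^{k-2}\big/\bigl(4^{k-2}(k-3)!\bigr)$. The right-hand side is $3k!\binom{2m-3}{k-3}$, a polynomial in $m$ of degree only $k-3$, with leading coefficient $3k!\,2^{k-3}/(k-3)!$. Since the degrees differ by one, the left side eventually dominates, whatever the size of the constants.

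To make this precise, I will use the elementary bounds $\binom{x}{j} \ge (x-j+1)^j/j!$ for real or integer $x \ge j-1$, and $\binom{y}{j} \le y^j/j!$. With $j=k-3$ these give
\[
\frac{m}{4}\binom{m/4}{k-3} \ge \frac{m}{4}\cdot\frac{(m/4-k+4)^{k-3}}{(k-3)!},
\qquad
3k!\binom{2m-3}{k-3} \le 3k!\cdot\frac{(2m)^{k-3}}{(k-3)!}.
\]
It therefore suffices to have
\[
\frac{m}{4}\Bigl(\frac{m}{4}-k+4\Bigr)^{k-3} > 3k!\,(2m)^{k-3}.
\]
For $m \ge 8(k-4)$ we have $m/4 - k + 4 \ge m/8$, so this follows from $\frac{m}{4}(m/8)^{k-3} > 3k!\,2^{k-3}m^{k-3}$. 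That inequality is equivalent to $m > 12\cdot k!\cdot 16^{k-3}$. Hence I will take
\[
m_0 = \max\{8(k-4),\ 12\cdot k!\cdot 16^{k-3}\}.
\]
The case $k=4$ needs no special treatment: the binomials have $j=1$, the lower bound on $\binom{m/4}{1}$ is an identity, and the same argument applies.

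The only care point is that $m/4$ must be an integer with $m/4 \ge k-3$ so that $\binom{m/4}{k-3}$ is an ordinary binomial coefficient. Divisibility by $4$ gives integrality, and $m > m_0$ comfortably ensures $m/4 \ge k-3$. The remaining work is routine.
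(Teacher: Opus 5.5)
Your proof is correct and uses the same idea as the paper's: the left side of \eqref{eq66} has degree $k-2$ in $m$, with leading coefficient $1/(4^{k-2}(k-3)!)$, while the right side has degree only $k-3$. Your opening claim that both sides have degree $k-2$ is a slip, which you correct a few lines later. The paper stops at $P(x)\to+\infty$ and takes a non-explicit $x_0$, whereas your bounds $\binom{x}{j}\ge (x-j+1)^j/j!$ and $\binom{y}{j}\le y^j/j!$ give the explicit threshold $m_0=\max\{8(k-4),\,12\cdot k!\cdot 16^{k-3}\}$.
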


\begin{proof}
	First, we assume that $m \geq 4k-12$ and $m$ is divisible by $4$. Then, since $k \geq 4$, we have $\frac{m}{4} \geq k-3 > 0$ and $2m-3 \geq 8k-27 > k-3 > 0$. This ensures that both binomial coefficients in \eqref{eq66} are well-defined. The difference between the left-hand and right-hand sides of \eqref{eq66} can be expressed as $P(m)$, where the polynomial $P(x)$ is explicitly given by 
	\[
	P(x) = \frac{x}{4(k-3)!} \prod_{i=0}^{k-4} \left(\frac{x}{4} - i\right) - \frac{3k!}{(k-3)!} \prod_{i=0}^{k-4} (2x - 3 - i).
	\]
	
	The polynomial $P(x)$ has degree $k-2$ and a leading coefficient equal to $\dfrac{1}{4^{k-2}(k-3)!}$. Since this leading coefficient is positive, $P(x) \to +\infty$ as $x \to \infty$. It follows that there exists a real number $x_0$ such that $P(x) > 0$ for all $x > x_0$. Let $m_0 = \max\{x_0, 4k-12\}$. Then, for all $m > m_0$ divisible by $4$, we have $P(m) > 0$, and consequently $m$ satisfies the $k$-condition.
\end{proof}

\begin{lemma} \label{lemma22} 
	If $n \geq 4$ and $m$ satisfies the $n$-condition, then
	\begin{enumerate} 
		\item[\textup{(i)}] inequality~\eqref{eq12} holds, which implies $4 \mid m$ and $m/4 \ge n - 3$;
		\item[\textup{(ii)}] $m > 2n$;
		\item[\textup{(iii)}] the inequalities~\eqref{eq8}, \eqref{eq13}, \eqref{eq24}, and \eqref{eq36} are valid.
	\end{enumerate}
\end{lemma}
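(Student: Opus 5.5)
Throughout, set $a=m/4$ and write the $n$-condition as
\[
a\binom{a}{n-3} > 3n!\binom{2m-3}{n-3}.
\]
My plan is to derive every claimed inequality from this single one by elementary manipulations of binomial coefficients. I use the standing assumption $m>k\ge n$ and the convention that $\binom{x}{j}$ with integer $j$ is the usual binomial coefficient of a nonnegative integer $x$, and vanishes when $x<j$.

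\emph{Part (i).} This part is immediate, since \eqref{eq12} is literally the $n$-condition. The binomial coefficient $\binom{m/4}{n-3}$ is only meaningful for integer $m/4$, so $4\mid m$. Because $2m-3\ge n-3$, the right-hand side is positive. If $m/4<n-3$, the left-hand side would vanish, which is impossible; hence $m/4\ge n-3$.

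\emph{Part (ii).} From $a\le 2m-3$ and $\binom{a}{n-3}\ge 1$ I get $\binom{a}{n-3}\le\binom{2m-3}{n-3}$. Dividing the $n$-condition by $\binom{a}{n-3}$ then gives $a>3n!$. Hence $m>12\,n!\ge 2n$; in fact $m\ge 288$, which I will use freely below.

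\emph{Part (iii).} The basic step is to rewrite the right-hand side using $\binom{2m-3}{n-3}=\frac{2m-3}{n-3}\binom{2m-4}{n-4}$ together with $(2m-3)/a\ge 7$. This yields
\[
\binom{a}{n-3} > \frac{21\,n!}{n-3}\binom{2m-4}{n-4}.
\]
\begin{itemize}
\item For \eqref{eq13}, I use $\binom{a}{n-3}\le\frac{a}{n-3}\binom{a}{n-4}$. This turns the $n$-condition into $a\binom{a}{n-4}>21n!\binom{2m-4}{n-4}\ge 5n!\binom{2m-4}{n-4}$.
\item For \eqref{eq24}, I compare $\binom{m-4}{n-3}$ with $\binom{a}{n-3}$ via $\binom{N}{j}/\binom{M}{j}\ge (N/M)^j$ for $N\ge M$. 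Since $(m-4)/a\ge 2$, this gives $\binom{m-4}{n-3}>\frac{21\cdot 2^{n-3}n!}{n-3}\binom{2m-4}{n-4}$, and $21\cdot 2^{n-3}\ge 9(n-3)$ holds for all $n\ge4$.
\item For \eqref{eq8}, I use $\binom{m-3}{n-2}=\frac{m-3}{n-2}\binom{m-4}{n-3}$ with $m-3>n-2$. This reduces it to the bound just obtained, which already exceeds $2n!\binom{2m-4}{n-4}$.
\item For \eqref{eq36}, I combine the same identity with $\binom{2m-3}{n-3}=\frac{2m-3}{n-3}\binom{2m-4}{n-4}$. The claim then reduces to $21\cdot2^{n-3}(m-3)\ge 5(2m-3)(n-2)$. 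Since $2m-3\le \tfrac52(m-3)$, it suffices to have $21\cdot 2^{n-3}\ge 12.5(n-2)$, which holds at $n=4$ and persists because the left side doubles with each increase of $n$.
\end{itemize}

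The main obstacle is bookkeeping rather than ideas. The constants $2,5,9$ and the factor $n-3$ in the denominator must be absorbed for every $n\ge4$, uniformly in $m$. The exponential gain $2^{n-3}$ from the ratio $(m-4)/a$ is what makes this work; if it proves too lossy for small $n$, I would check $n=4,5$ directly using $m\ge288$.
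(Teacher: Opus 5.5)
Your proof is correct, but it takes a different route from the paper's for (ii) and for most of (iii).

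\emph{Part (ii).} The paper argues by contradiction. If $m\le 2n$, then $m/4\le n/2<n-3$ for $n\ge 7$, and the cases $n\in\{4,5,6\}$ are dismissed by direct substitution into \eqref{eq12}. You instead use $\binom{m/4}{n-3}\le\binom{2m-3}{n-3}$ to divide the $n$-condition, which gives $m/4>3n!$ uniformly in $n$. This is much stronger than $m>2n$ and removes the case check. It also makes every later constant-absorption step ($\frac{2m-3}{m/4}\ge 7$, $\frac{m-4}{m/4}\ge 2$, $2m-3\le\frac52(m-3)$) immediate.

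\emph{Part (iii).} The paper proves \eqref{eq36} first. It reduces \eqref{eq36} via \eqref{eq12} to $\binom{m-3}{n-2}>\frac{5m}{12}\binom{m/4}{n-3}$ and expands both sides as products. It then bounds $\frac{4^{n-3}}{n-2}\ge 2$ by Bernoulli's inequality, uses $m-n>m/2$, and bounds the remaining product by $\frac34$. Finally it deduces \eqref{eq8} and \eqref{eq24} from \eqref{eq36}; its derivation of \eqref{eq13} from \eqref{eq12} is essentially the same as yours. Your order is reversed. You first pass from \eqref{eq12} to a bound on $\binom{m/4}{n-3}$. You then get \eqref{eq24} from the ratio estimate $\binom{m-4}{n-3}\ge 2^{n-3}\binom{m/4}{n-3}$. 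Finally you climb to \eqref{eq8} and \eqref{eq36} via $\binom{m-3}{n-2}=\frac{m-3}{n-2}\binom{m-4}{n-3}$. Your factor $2^{n-3}$ plays the role of the paper's $4^{n-3}$. All your steps check out, including the doubling argument for $21\cdot 2^{n-3}\ge 12.5(n-2)$, so the fallback check for $n=4,5$ is unnecessary. What the paper's version buys is only the minimal bound $m>2n$ needed later; yours buys a cleaner, case-free argument with a far stronger size estimate.

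\emph{The standing assumption.} Do not invoke $m>k\ge n$ inside this lemma. In the proof of Theorem~\ref{thm1}, $m$ is only assumed to satisfy the $k$-condition. The requirements $m\ge 6$ and $m>k$ of Subsection~4.1 therefore have to be deduced from it, for example via (ii) with $n=k$. Using them here is circular, although harmlessly so. You use the assumption only to make the right-hand side of \eqref{eq12} positive in (i), and that is not needed. The left-hand side strictly exceeds a nonnegative quantity, so $\binom{m/4}{n-3}>0$ and hence $m/4\ge n-3$ directly.
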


\begin{proof}
	Suppose that $m$ satisfies the $n$-condition.
	
	\smallskip
	(i) In this case, inequality~\eqref{eq66} rewrites as \eqref{eq12}. The constraints $4 \mid m$ and $m/4 \ge n - 3$ follow immediately from the well-definedness of the binomial coefficient $\binom{m/4}{n-3}$ in \eqref{eq12}.
	
	\smallskip 
	(ii) Assume that $m \leq 2n$. If $n \geq 7$, then $m/4 \leq n/2 < n-3$, which contradicts~(i). For $n \in \{4, 5, 6\}$ and $m \leq 2n$, a direct substitution into \eqref{eq12} also yields a contradiction. Thus, $m > 2n$.
	
	\smallskip
	(iii) Here, we assume that $n \ge 4$ along with the validity of~(i) and~(ii). Under these conditions, all binomial coefficients in \eqref{eq8}, \eqref{eq13}, \eqref{eq24}, and \eqref{eq36} are well-defined. Furthermore, we have $m \ge 12$.
	
\smallskip
First, we establish \eqref{eq36}. By \eqref{eq12}, it suffices to show that
\begin{equation} \label{eq67}
	\binom{m-3}{n-2} > \frac{5m}{12} \binom{m/4}{n-3}.
\end{equation}

Inequality \eqref{eq67} can be written explicitly as
\[
\frac{1}{(n-2)!} \prod_{i=0}^{n-3} (m - 3 - i) > \frac{5m}{12(n-3)!} \prod_{i=0}^{n-4} \left(\frac{m}{4} - i\right),
\]
which leads to the following equivalent form:
\begin{equation} \label{eq68}
	\frac{4^{n-3}}{n - 2} \cdot (m - n) \prod_{i=0}^{n-4} \frac{m - 3 - i}{m - 4i} > \frac{5m}{12}.
\end{equation}

Now, we bound each factor on the left-hand side of~\eqref{eq68} from below.

By Bernoulli's inequality, the first factor satisfies
\begin{equation} \label{eq69}
	\frac{4^{n-3}}{n - 2} = \frac{(1 + 3)^{n-3}}{n - 2} \geq \frac{1 + 3(n - 3)}{n - 2} = \frac{3n - 8}{n - 2} \geq 2.
\end{equation}
	
Since $m > 2n$, it follows that 
\begin{equation} \label{eq70}
	m - n = \frac{m}{2} + \left(\frac{m}{2} - n\right) > \frac{m}{2}.
\end{equation}

For the product term, as each factor with $i \geq 1$ is at least $1$, the condition $m \geq 12$ immediately yields
\begin{equation} \label{eq71}
	\prod_{i=0}^{n-4} \frac{m - 3 - i}{m - 4i} \geq \frac{m-3}{m} \geq \frac{3}{4}.
\end{equation}

Combining \eqref{eq69}--\eqref{eq71}, we obtain \eqref{eq68}:
\[
\frac{4^{n-3}}{n - 2} \cdot (m - n) \prod_{i=0}^{n-4} \frac{m - 3 - i}{m - 4i} > 2 \cdot \frac{m}{2} \cdot \frac{3}{4}
= \frac{3m}{4} > \frac{5m}{12},
\]
thereby ensuring \eqref{eq67} and proving \eqref{eq36}.

Next, using the initial inequalities \eqref{eq12}, \eqref{eq36} and the standard properties of binomial coefficients, we derive \eqref{eq8}, \eqref{eq13}, and \eqref{eq24}.
\smallskip

To begin with, \eqref{eq8} follows straightforwardly from \eqref{eq36}: 
\begin{align*}
	\binom{m-3}{n-2} > 5n! \binom{2m-3}{n-3}= 5n! \binom{2m-4}{n-4} \cdot \frac{2m-3}{n-3} > 2n! \binom{2m-4}{n-4}.
\end{align*}
\smallskip

In turn, relation \eqref{eq12} implies \eqref{eq13}: 
\begin{align*}
	\frac{m}{4} \binom{m/4}{n-4} &= \frac{m}{4} \binom{m/4}{n-3} \cdot \frac{n-3}{m/4 - n + 4}  > 3n! \binom{2m-3}{n-3} \cdot  \frac{n-3}{m/4} \\[1.5ex]
	&=12n! \binom{2m-4}{n-4} \cdot  \frac{2m-3}{n-3} \cdot \frac{n-3}{m} > 5n! \binom{2m-4}{n-4}.
\end{align*}		
\smallskip

Finally, inequality \eqref{eq24} is deduced from \eqref{eq36} through the following steps: 		
\begin{align*}
	\binom{m-4}{n-3} &= \binom{m-3}{n-2} \cdot  \frac{n-2}{m-3} > 5n! \binom{2m-3}{n-3} \cdot \frac{n-2}{m-3}  \\[1.5ex]
	&=5n! \binom{2m-4}{n-4} \cdot \frac{2m-3}{n-3} \cdot \frac{n-2}{m-3} \\[1.5ex]
	& > 5n! \binom{2m-4}{n-4} \cdot  \frac{2m-6}{n-3} \cdot \frac{n-3}{m-3} > 9n! \binom{2m-4}{n-4}. \qedhere
\end{align*}
\end{proof}

\begin{lemma} \label{lemma23}
	Let $k \geq 4$. If $m$ satisfies the $k$-condition, then it also satisfies the $n$-condition for all $4 \leq n \leq k$.
\end{lemma}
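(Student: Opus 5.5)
The plan is to argue by downward induction on $n$. It suffices to show that for every $n$ with $4 \le n < k$, the $(n+1)$-condition implies the $n$-condition. Starting from the $k$-condition and descending step by step then gives the $n$-condition for all $4 \le n \le k$.

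Write $L_n = \frac{m}{4}\binom{m/4}{n-3}$ and $R_n = 3n!\binom{2m-3}{n-3}$, so that the $n$-condition reads $L_n > R_n$. Assume $L_{n+1} > R_{n+1}$. As in Lemma~\ref{lemma22}(i), applied with $n+1$ in place of $n$, the well-definedness of $\binom{m/4}{n-2}$ gives $4 \mid m$ and $m/4 \ge n-2$. In particular $m \ge 4(n-2) \ge 8$, and all binomial coefficients below are positive integers with well-defined ratios. I then use the standard identity $\binom{N}{j} = \binom{N}{j+1}\cdot\frac{j+1}{N-j}$ to express each side at level $n$ through level $n+1$:
\[
L_n = L_{n+1}\cdot\frac{n-2}{m/4-n+3}, \qquad R_n = R_{n+1}\cdot\frac{n-2}{(n+1)(2m-n)}.
\]
Here the factor $\frac{1}{n+1}$ in $R_n$ comes from $n!/(n+1)!$, and $2m-3-(n-3) = 2m-n$. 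Both denominators are positive: $m/4-n+3 \ge 1$, and $2m-n > 0$.

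The key step is then the elementary comparison of the two multipliers, namely
\[
(n+1)(2m-n) \ge \frac{m}{4}-n+3 .
\]
This holds because $(n+1)(2m-n) \ge 2m-n$ and $2m \ge \frac{m}{4}+3$ for $m \ge 2$. It follows that $\frac{n-2}{m/4-n+3} \ge \frac{n-2}{(n+1)(2m-n)}$, and hence
\[
L_n = L_{n+1}\cdot\frac{n-2}{m/4-n+3} > R_{n+1}\cdot\frac{n-2}{m/4-n+3} \ge R_{n+1}\cdot\frac{n-2}{(n+1)(2m-n)} = R_n ,
\]
where the strict inequality uses $n-2>0$ and the positivity of the multiplier.

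I do not expect a real obstacle. The only delicate points are bookkeeping ones: confirming that the binomial coefficients and the ratio identities are legitimate, which rests on $4\mid m$ and $m/4 \ge n-2$ inherited from the $(n+1)$-condition, and getting the factorial ratio $n!/(n+1)!$ right on the right-hand side. The divisibility $4 \mid m$ is unchanged across levels, since it is part of the hypothesis at each step.
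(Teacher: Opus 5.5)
Your proposal is correct and follows essentially the paper's own argument: downward induction, using Lemma~\ref{lemma22}(i) to get $4 \mid m$ and the needed lower bound on $m/4$, rewriting both sides of the condition one level down via the ratio identity for binomial coefficients, and comparing the two resulting multipliers. The only difference is cosmetic: you compare $\frac{n-2}{m/4-n+3}$ with $\frac{n-2}{(n+1)(2m-n)}$ directly, whereas the paper, indexing the step as $n \to n-1$, passes through the intermediate bound $\frac{n-3}{m}$ and then uses $n(2m-n+1) > m$.
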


\begin{proof}
	Suppose that the $k$-condition holds for $m$. If $k = 4$, the statement is trivial. Thus, we assume $k \geq 5$ and proceed by downward induction on $n$.
	
	The base case $n=k$ is given by the hypothesis. Let $m$ fulfill the $n$-condition for some $5 \leq n \leq k$. Then, in view of Lemma~\ref{lemma22}, we have
	\begin{align*}
		\frac{m}{4} \binom{m/4}{n-4} &= \frac{m}{4} \binom{m/4}{n-3} \cdot \frac{n-3}{m/4 - n + 4} > 3n! \binom{2m-3}{n-3} \cdot \frac{n-3}{m} \\[1.5ex]
		&= 3n! \binom{2m-3}{n-4} \cdot \frac{2m-n+1}{m} > 3(n-1)! \binom{2m-3}{n-4},
	\end{align*}
	which yields
	\[
	\frac{m}{4} \binom{m/4}{n-4} > 3(n-1)! \binom{2m-3}{n-4}.
	\]
	This means that $m$ satisfies the $(n-1)$-condition, completing the inductive step.
\end{proof}

\subsection{Proof of Theorem~\ref{thm1}}	
Fix an integer $k \geq 4$ and let $m$ satisfy the $k$-condition. Consider the graph $\mathcal{W}_{2m}$, any integer $4 \leq n \leq k$, and any root $r$ of the path $P_n$ such that $1 \leq r \leq \lceil n/2 \rceil$. 

By Lemma~\ref{lemma23}, $m$ fulfills the $n$-condition. Consequently, Lemma~\ref{lemma22} ensures that all inequalities required for Lemmas~\ref{lemma9}--\ref{lemma19} hold. According to Lemmas~\ref{lemma9}--\ref{lemma16}, the $(P_n)_r$-degrees of the vertices in $\mathcal{W}_{2m}$ exhibit the chain:
\begin{equation} \label{eq72}
	Y_{m-1} > Y_{m-2} > \dots > Y_1 > Y_{m+1} > Y_{m+2} > \dots > Y_{2m}.
\end{equation}
Furthermore, via Lemmas~\ref{lemma17}--\ref{lemma19}, the position of $Y_m$ relative to the ordering \eqref{eq72} is determined as follows:
\begin{itemize}
	\item $Y_m > Y_{m-1}$ if $r=2$,
	\item $Y_{m-2} < Y_m < Y_{m-1}$ if $\langle n,r \rangle = \langle 5,3 \rangle$,
	\item $Y_{m-3} < Y_m < Y_{m-2}$ if $r \neq 2$ and $\langle n,r \rangle \neq \langle 5,3 \rangle$.
\end{itemize}
In all cases, the $(P_n)_r$-degrees of the vertices in $\mathcal{W}_{2m}$ are pairwise distinct, which establishes that $\mathcal{W}_{2m}$ is $(P_n)_r$-irregular.

Next, Lemma~\ref{lemma20}, relation \eqref{eq72}, and the equality
\[(P_n)\text{-}\deg_{\mathcal{W}_{2m}}(v) = \sum_{r=1}^{\lceil n/2 \rceil} (P_n, r)\text{-}\deg_{\mathcal{W}_{2m}}(v), \quad v \in V(\mathcal{W}_{2m}),\]
together yield the final distribution of the total $P_n$-degrees:
\begin{gather*}
	(P_n)\text{-}\deg_{\mathcal{W}_{2m}}(m) > (P_n)\text{-}\deg_{\mathcal{W}_{2m}}(m-1) > \dots > (P_n)\text{-}\deg_{\mathcal{W}_{2m}}(1) \\
	>(P_n)\text{-}\deg_{\mathcal{W}_{2m}}(m+1) > (P_n)\text{-}\deg_{\mathcal{W}_{2m}}(m+2) 
	> \dots > (P_n)\text{-}\deg_{\mathcal{W}_{2m}}(2m).
\end{gather*}
Hence, the graph $\mathcal{W}_{2m}$ is also $P_n$-irregular.

To conclude the argument, we observe that Lemma~\ref{lemma21} guarantees that the $k$-condition holds for infinitely many values of $m$. Therefore, we obtain an infinite family of graphs~$\mathcal{W}_{2m}$, where $m$ satisfies the $k$-condition, each of which simultaneously exhibits all the prescribed types of irregularity. This completes the proof of Theorem~\ref{thm1}.  
\smallskip 

Theorems~\ref{thm1} and~\ref{thm2} directly imply the following statements.

\begin{corollary} \label{cor1} 
	For every integer $n \geq 3$, there exist infinitely many \mbox{$P_n$-irregular} graphs. 
\end{corollary}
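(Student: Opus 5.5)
The corollary follows by splitting into the cases $n=3$ and $n\ge 4$ and quoting the two main theorems; the only thing to check is that each family really is infinite, i.e.\ consists of pairwise non-isomorphic graphs. Different orders suffice for this.

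For $n=3$, invoke Theorem~\ref{thm3}. For every integer $m\ge 6$ the graph $\mathcal{T}_{2m+1}$ is $P_3$-irregular. The graphs $\mathcal{T}_{2m+1}$ have pairwise distinct orders $2m+1$, so they are pairwise non-isomorphic, and the family $\{\mathcal{T}_{2m+1}: m\ge 6\}$ is an infinite collection of $P_3$-irregular graphs. In particular, the $(P_3)_1$-irregularity in Theorem~\ref{thm2} is not even needed here.

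For $n\ge 4$, apply Theorem~\ref{thm1} with $k=n$. Its proof shows that the graph $\mathcal{W}_{2m}$ is simultaneously irregular for every pair $\langle n',r\rangle$ with $4\le n'\le k$ whenever $m$ satisfies the $k$-condition. We then restrict to the single value $n'=n$ and discard the rooted statements, so these graphs are $P_n$-irregular. By Lemma~\ref{lemma21}, every sufficiently large $m$ divisible by $4$ satisfies the $n$-condition. This yields infinitely many admissible values of $m$, and hence infinitely many graphs $\mathcal{W}_{2m}$ of pairwise distinct orders $2m$.

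There is no real obstacle. The only point deserving care is that the \emph{infinite family} in Theorem~\ref{thm1} must consist of non-isomorphic graphs, which the distinct orders guarantee. Combining the two cases gives, for every $n\ge 3$, infinitely many $P_n$-irregular graphs. This is precisely Conjecture~\ref{con2} for $F=P_n$, since every path $P_n$ with $n\ge 3$ is connected of order at least $3$.
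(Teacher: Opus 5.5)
Your proposal is correct and follows the paper's route. The paper likewise treats the corollary as a direct consequence of Theorem~\ref{thm2} (through Theorem~\ref{thm3} and the graphs $\mathcal{T}_{2m+1}$, $m\ge 6$) for $n=3$, and of Theorem~\ref{thm1} with $k=n$ (the graphs $\mathcal{W}_{2m}$, infinitely many admissible $m$ by Lemma~\ref{lemma21}) for $n\ge 4$; your remark that distinct orders make these graphs pairwise non-isomorphic is a harmless detail the paper leaves implicit.
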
 

\begin{corollary} \label{cor2} 
	Except when $n = 3$ and $r$ is the central vertex of $P_3$, there is an infinite number of \mbox{$(P_n)_r$-irregular} graphs for every integer $n \geq 3$ and each root $r$ of $P_n$. 
\end{corollary}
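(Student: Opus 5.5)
The statement to prove is Corollary~\ref{cor2}: for every $n \geq 3$ and every root $r$ of $P_n$, except the pair $n=3$ with $r$ central, there are infinitely many $(P_n)_r$-irregular graphs. I would argue by splitting on $n$ and quoting the two main theorems, with the remaining work being the reduction of an arbitrary root to the normalized range $1 \le r \le \lceil n/2 \rceil$.

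First, the symmetry reduction. The reversal $v_i \mapsto v_{n+1-i}$ is an automorphism of $P_n = (v_1, \dots, v_n)$. If $H \subseteq G$ is isomorphic to $P_n$ via some $\phi$ with $\phi(v_i) = v$, then composing $\phi$ with the reversal gives an isomorphism sending $v_{n+1-i}$ to $v$. The definition of $(F)_r$-degree counts subgraphs $H$ for which such an isomorphism exists, not isomorphisms themselves. Hence
\[
(P_n)_i\text{-}\deg_G(v) = (P_n)_{n+1-i}\text{-}\deg_G(v)
\]
for every vertex $v$. So $(P_n)_i$-irregularity and $(P_n)_{n+1-i}$-irregularity coincide, and it suffices to treat $r \le \lceil n/2 \rceil$, which is exactly the convention the paper adopts.

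For $n \geq 4$, I would fix $n$ and apply Theorem~\ref{thm1} with $k = n$. It supplies an infinite family of graphs that are $(P_n)_r$-irregular for every root $r$ of $P_n$. Concretely, these are the graphs $\mathcal{W}_{2m}$ with $m$ satisfying the $n$-condition, and Lemma~\ref{lemma21} guarantees infinitely many such $m$. These graphs have pairwise distinct orders $2m$, so the family really is infinite.

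For $n = 3$, the only root to consider is an end-vertex, since the central root is excluded. By the symmetry above, both end-vertices give the same notion, and Theorem~\ref{thm2} applies. Explicitly, Theorem~\ref{thm3} shows that $\mathcal{T}_{2m+1}$ is $(P_3)_1$-irregular for each $m \ge 6$, which again is infinitely many graphs of distinct orders.

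The excluded case is not needed for the statement, but it is consistent with Proposition~\ref{p1}, which shows that no nontrivial $(P_3)_2$-irregular graph exists.

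The only point needing real care is the reduction from an arbitrary root to $r \le \lceil n/2\rceil$. This is a routine automorphism argument, so I expect no genuine obstacle: the corollary follows directly from the two theorems.
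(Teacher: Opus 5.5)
Your proposal is correct and follows the paper's own route: the paper derives the corollary directly from Theorem~\ref{thm1} (for $n \ge 4$) and Theorem~\ref{thm2} (for $n=3$ with an end-vertex root). Your reversal argument $(P_n)_i\text{-}\deg_G(v) = (P_n)_{n+1-i}\text{-}\deg_G(v)$ just spells out the symmetry normalization $r \le \lceil n/2 \rceil$ that the paper states without proof.
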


\section{Conclusion}	

This paper initiates the systematic study of $(F)_r$-irregular graphs, a concept originally announced in the monograph~\cite{gtwa}. At the heart of our work is the construction of graphs that unify multiple forms of irregularity. For any $k \geq 4$, we explicitly present infinite families of graphs that are at once $P_n$-irregular and $(P_n)_r$-irregular for each ${n \in \{4, 5, \dots, k\}}$ and all roots $r$ of the path $P_n$. While no nontrivial graph can be $(P_3)_r$-irregular when $r$ is the central vertex of $P_3$, we prove the existence of an infinite class of graphs that exhibit both $P_3$-irregularity and $(P_3)_r$-irregularity when $r$ is an end-vertex.  

The present results confirm Conjecture~\ref{con2} for all paths $P_n$ with $n \geq 3$ (see Corollary~\ref{cor1}).

By analogy with Proposition~\ref{p1}, one can establish a more general limitation on the pairs $\langle F, r \rangle$ for which nontrivial $(F)_r$-irregular graphs exist:

\begin{proposition}
	For any integer $n \geq 3$, if $r$ is the center of the star $K_{1, n-1}$, then no non-trivial graph is $(K_{1, n-1})_r$-irregular.
\end{proposition}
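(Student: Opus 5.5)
The plan is to imitate the proof of Proposition~\ref{p1}: the $(K_{1,n-1})_r$-degree of a vertex with $r$ the center will be expressed purely in terms of its ordinary degree, and then the classical fact that every nontrivial graph has two vertices of equal degree will finish the argument.

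First, let $G$ be a nontrivial graph and $v \in V(G)$, with $r$ the center of $K_{1,n-1}$ and $n \geq 3$. Take a subgraph $H \subseteq G$ isomorphic to $K_{1,n-1}$ together with an isomorphism $\phi$ sending $r$ to $v$. Then $H$ consists of $v$, $n-1$ distinct vertices of $N_G(v)$ (the images of the leaves), and the $n-1$ edges joining $v$ to them. Conversely, any $(n-1)$-element subset $S \subseteq N_G(v)$ determines exactly one such subgraph, namely the star with center $v$ and leaf set $S$. Distinct subsets give distinct subgraphs, and the subgraph determines $S$ as $V(H)\setminus\{v\}$. This is well defined because for $n \geq 3$ the center of $K_{1,n-1}$ is the unique vertex of degree $n-1 \geq 2$, so within $H$ the vertex $v$ is distinguished from the leaves. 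The resulting bijection gives
\[
(K_{1,n-1})_r\text{-}\deg_G(v) = \binom{\deg_G(v)}{n-1},
\]
which generalizes \eqref{eq2}.

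Second, every nontrivial graph contains two distinct vertices $u_1,u_2$ with $\deg_G(u_1)=\deg_G(u_2)$. This is the pigeonhole argument the paper already uses: the degrees lie in $\{0,\dots,|G|-1\}$, and the values $0$ and $|G|-1$ cannot occur together. By the displayed formula, $u_1$ and $u_2$ then have equal $(K_{1,n-1})_r$-degrees, so $G$ is not $(K_{1,n-1})_r$-irregular.

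There is no real obstacle. The only point needing care is the bijection in the first step: one must confirm that an isomorphism with $\phi(r)=v$ forces $v$ to be the center. For $n\ge3$ this holds because the center of $K_{1,n-1}$ is the unique vertex of degree $n-1\ge2$, while every leaf has degree $1$, and isomorphisms preserve degrees.
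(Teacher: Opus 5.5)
Your proof is correct and is the argument the paper intends. The paper gives no separate proof and says only that the proposition follows ``by analogy with Proposition~\ref{p1}'', namely via the identity $(K_{1,n-1})_r\text{-}\deg_G(v)=\binom{\deg_G(v)}{n-1}$ (the analogue of \eqref{eq2}) together with the fact that every nontrivial graph has two vertices of equal degree.
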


As this appears to be the sole exception, we state the following conjecture.

\begin{conjecture}[Strong Conjecture about $(F)_r$-irregular graphs] \label{con3}
	For every connected graph $F$ of order $|F| \ge 3$ and each root $r$ of $F$, unless $F= K_{1,n-1}$ and $r$ is the center of $K_{1,n-1}$, there exist infinitely many $(F)_r$-irregular graphs.
\end{conjecture}

Corollary~\ref{cor2} establishes the validity of Conjecture~\ref{con3} for all paths $P_n$ with $n \geq 3$. Furthermore, since the rooted and ordinary $C_n$-degrees of a vertex coincide, Conjecture~\ref{con3} also holds for cycles by~\cite{r4}. Similarly, this result extends to directed cycles (investigated as ``oriented cycles'' in~\cite{r7}). 
In conclusion, we raise the following open problem:

\smallskip
\noindent \textbf{Problem.} \textit{Determine whether analogues of Theorems~\ref{thm1} and~\ref{thm2} hold for any directed path of order $n \ge 3$.}

\end{document}